\title{Geodesics of norms on the contactomorphisms group of $\mathbb{R}^{2n}\times S^1$ }
\author{Pierre-Alexandre Arlove}
\date{\vspace{-7.ex}}
\DeclareMathOperator{\Interior}{Interior}
\DeclareMathOperator{\Supp}{Supp}
\DeclareMathOperator{\Id}{Id}
\DeclareMathOperator{\Ker}{Ker}
\DeclareMathOperator{\Cont}{Cont}
\DeclareMathOperator{\Hess}{Hess}
\theoremstyle{definition}
\newtheorem{definition}{Definition}[section]
\theoremstyle{plain}
\newtheorem{theoreme}[definition]{Theorem}
\newtheorem{proposition}[definition]{Proposition}
\newtheorem{lemme}[definition]{Lemma}
\newtheorem{corollaire}[definition]{Corollary}
\theoremstyle{remark}
\newtheorem{remarque}[definition]{Remark}
\newcounter {subsubsubsection}[subsubsection]
\renewcommand\thesubsubsubsection{\thesubsubsection .\@alph\c@subsubsubsection}
\newcommand\subsubsubsection{\@startsection{subsubsubsection}{4}{\z@}%
	{-3.25ex\@plus -1ex \@minus -.2ex}%
	{1.5ex \@plus .2ex}%
	{\normalfont\normalsize\bfseries}}
\newcommand*\l@subsubsubsection{\@dottedtocline{3}{10.0em}{4.1em}}
\newcommand*{\subsubsubsectionmark}[1]{}
\begin{document}
\maketitle

\setlength{\parindent}{0pt}

%We will define in the first section two conjugation-invariant norms on the contactomorphism group and its universal cover of a contact manifold, the discriminant and the oscillation norm . In this first section, roughly speaking the point of view on these norms will be as follow, first we will define the discriminant and oscillation length of a path of contactomorphisms, and then deduce the definition of the norms from the definition of the length. This will allow us to define what is a geodesic and state the main theorems of the paper. In the second section we will show that these norms are well-defined and not infinite, and give a more algebraic definition of our norms in terms of words metric. The strategy to show that they are well defined will be to see the graph of a contactomorphism as a legendrian. In these terms the discriminant length of a path of contactomorphism will come from the intersections of the path of legendrians associated to it. By the legendrian neighboorhood theorem we will be able to show that locally we can always avoid legendrian intersection, and deduce that the norms are well-defined. In the third section we will give a ...

\begin{abstract}
   We prove that some paths of contactomorphisms of $\mathbb{R}^{2n}\times S^1$ endowed with its standard contact structure are geodesics for different norms defined on the identity component of the group of compactly supported contactomorphisms and its universal cover. We characterize these geodesics by giving conditions on the Hamiltonian functions that generate them. For every norm considered we show that the norm of a contactomorphism that is the time-one of such a geodesic can be expressed in terms of the maximum of the absolute value of the corresponding Hamiltonian function. In particular we recover the fact that these norms are unbounded.
\end{abstract}

\section{Introduction}

\noindent
Hofer \cite{hofer} introduced in the 90's a conjugation invariant norm that comes from a Finsler structure on the group of compactly supported Hamiltonian symplectomorphisms of the standard symplectic Euclidean space $(\mathbb{R}^{2n},{\omega_{st}})$. This norm has then been generalized to any symplectic manifold by Lalonde and McDuff \cite{lalondemcduff}. The Hofer norm has been intensively studied (see for example \cite{bialypolterovich}, \cite{ostrover}, \cite{hoferzehnder}, \cite{lalondemcduff}, \cite{polterovich}, \cite{schwarz}, \cite{usher}, \cite{ustilovsky}) since the non-degeneracy and the geodesics of this norm are notions that are intimately linked to symplectic rigidity phenomena such as non-squeezing or Lagrangian intersection properties.\\

By contrast, in the contact setting Fraser, Polterovich and Rosen \cite{FPR} showed that there does not exist any conjugation invariant norm coming from a Finsler structure on the identity component of the group of compactly supported contactomorphisms of any contact manifold. More precisely they showed that any conjugation invariant norm on this group should be discrete, which means that there exists a positive constant such that any element that is not the identity has norm greater than this constant. In some sense this group is too big to carry a non-discrete conjugation invariant norm. Indeed, one important ingredient in their proof is an argument of contact flexibility: any Darboux ball can be contactly squeezed into an arbitrarily small one. \\

\noindent
Nevertheless, Sandon in \cite{sandonmetrique} constructed an integer-valued unbounded conjugation invariant norm on the identity component of the group of compactly supported contactomorphisms of $\mathbb{R}^{2n}\times S^1$ with its standard contact structure. Contact rigidity, in particular the existence of translated and discriminant points (see section \ref{section 3} for a definition), plays a crucial role for existence of such a norm. Indeed, if we forget the contact structure, Burago, Ivanov and Polterovich \cite{BIP} proved that there is no unbounded conjugation invariant norm on the identity component of the group of compactly supported diffeomorphism of $\mathbb{R}^{2n}\times S^1$. Since then several authors constructed different norms (conjugation invariant or not) on the identity component of the group of compactly supported contactomorphisms and on its universal cover \cite{discriminante}, \cite{FPR}, \cite{shelukhin}, \cite{Zapolsky}.\\

\noindent
The idea of this paper is to study the geodesics of some of these norms in this context. We focus our study on the discriminant norm \cite{discriminante}, the oscillation norm \cite{discriminante}, the Shelukhin norm \cite{shelukhin} and the Fraser-Polterovich-Rosen norm (FPR norm) \cite{FPR} on $\text{Cont}_0^c(\mathbb{R}^{2n}\times S^1,\xi_{st})$, the identity component of the group of compactly supported contactomorphisms of $\mathbb{R}^{2n}\times S^1$ endowed with its standard contact structure, and on its universal cover $\widetilde{\Cont_0^c}(\mathbb{R}^{2n}\times S^1,\xi_{st})$. As we recall below, for any co-oriented contact manifold, once a global contact form is fixed for the contact distribution, there is an explicit bijection between the space of smooth compactly supported time dependent functions on the contact manifold and the space of smooth paths of compactly supported contactomorphisms starting at the identity. While Shelukhin and Fraser, Polterovich and Rosen use this correspondence to define their norms in terms of the corresponding functions' size, the discriminant norm and the oscillation norm count in some sense the minimal number of discriminant points we should encounter while going from the identity to the considered contactomorphism along a smooth path.\\

Because the bijection mentioned above - between paths of contactomorphisms and functions - depends on the choice of a contact form, the Shelukhin norm is not conjugation invariant. All the other mentioned norms are conjugation invariant. However, as we will see in section \ref{section 2}, the Shelukhin norm, the oscillation norm and the discriminant norm share the common property to be defined by minimizing some length functionals on the space of smooth paths of compactly supported contactomorphisms. Therefore it makes sense to talk about the geodesics of these three norms: they are the paths that minimize the length (see section \ref{section 1} for a precise definition).\\ 

The main result that we present in this paper is the following: we show that a path of contactomorphisms generated by a function satisfying certain conditions is a geodesic for the discriminant, oscillation and Shelukhin norms. We show moreover that the norm of a contactomorphism that is the time-one of such a geodesic can be expressed in terms of the maximum of the absolute value of the corresponding function. In addition, even if the FPR norm does not \textit{a priori} come from a length functional - therefore we cannot talk about its geodesics - we still can express the FPR norm of such a time-one path in terms of the maximum of the absolute value of the corresponding function. In particular we get as a corollary a new proof of the unboundedness of the discriminant norm, the oscillation norm, the Shelukhin norm and the FPR norm on $\Cont_0^c(\mathbb{R}^{2n}\times S^1,\xi_{st})$. Before stating precisely these results let us define the standard contact structure and the standard contact form on $\mathbb{R}^{2n}\times S^1$. This will allow us to explicit the bijection between compactly supported time-dependent functions and compactly supported paths of contactomorphisms starting at the identity. \\ 

\noindent
For any positive natural number $n\in\mathbb{N}_{>0}$, the standard contact structure $\xi_{st}$ on $\mathbb{R}^{2n+1}$ is given by the kernel of the $1$-form $\alpha_{st}=dz-\sum\limits_{i=1}^ny_idx_i$, where $(x_1,\dots,x_n,y_1,\dots,y_n,z)$ are the coordinate
functions on $\mathbb{R}^{2n+1}$. The Reeb vector field $R_{\alpha_{st}}$ is given by $\frac{\partial}{\partial z}$. The $1$-form $\alpha_{st}$ is invariant by the action of $\mathbb{Z}$ on $\mathbb{R}^{2n+1}$ given by $k\cdot(x,y,z)=(x,y,z+k)$ for all $k\in\mathbb{Z}$ and $(x,y,z)\in\mathbb{R}^{2n+1}$, so it descends
to the quotient $\mathbb{R}^{2n}\times S^1$ to a $1$-form we also denote $\alpha_{st}$. The kernel of this latter form, which we still denote by $\xi_{st}$, is the standard contact distribution on $\mathbb{R}^{2n}\times S^1$, and the Reeb vector field $R_{\alpha_{st}}$ is again given by $\frac{\partial}{\partial z}$.\\

\noindent
To any compactly supported smooth path of contactomorphisms $\{\phi^t\}_{t\in[0,1]}\subset\text{Cont}_0^c(\mathbb{R}^{2n}\times S^1,\xi_{st})$ starting at the identity one can associate the compactly supported function $h:[0,1]\times \mathbb{R}^{2n}\times S^1\to\mathbb{R}$, $(t,p,z)\mapsto h^t(p,z)$ that satisfies
\[\alpha_{st}\left(\frac{d}{dt}\phi^t\right)=h^t\circ\phi^t\ \text{for all }t\in[0,1].\]
Conversely to any time dependent compactly supported smooth function $h :[0,1]\times\mathbb{R}^{2n}\times S^1\to\mathbb{R}$ one can associate the smooth path of vector fields $X_h : [0,1]\to\chi(M)$ satisfying 
\begin{equation}\label{contact vector field} \left\{
    \begin{array}{ll}
        \alpha_{st}(X_h^t)=h^t  \\
        d\alpha_{st}(X_h^t,\cdot)=dh^t\left(\frac{\partial}{\partial z}\right)\alpha_{st}-dh^t.
    \end{array}
\right.
\end{equation}
Using the Cartan's formula one can show that the flow $\{\phi_{X_h}^t\}_{t\in\mathbb{R}}$ of the time dependent vector field $X_h$, i.e.\ the path of diffeomorphisms such that $\frac{d}{dt}\phi_{X_h}^t=X_h^t(\phi_{X_h}^t)$ for all $t\in\mathbb{R}$ and $\phi_{X_h}^0=\Id$, is a smooth path of contactomorphisms. We denote by $\phi_h:=\{\phi_{X_h}^t\}_{t\in[0,1]}$ the restriction of this flow to the time interval $[0,1]$ and we say that $h$ is the Hamiltonian function that generates $\phi_h$. \\

In Theorem \ref{geodesique shelukhin/fpr} below $\nu_S^{\alpha_{st}}$ and $\widetilde{\nu_S^{\alpha_{st}}}$ denote the Shelukhin (pseudo-)norms on $\Cont_0^c(\mathbb{R}^{2n}\times S^1,\xi_{st})$ and $\widetilde{\Cont_0^c}(\mathbb{R}^{2n}\times S^1,\xi_{st})$ respectively, and the notation $\mathcal{L}_S^{\alpha_{st}}$ stands for the Shelukhin length functional that we define in section \ref{section 2}. We denote by $\nu^{\alpha_{st}}_{FPR}$ and $\widetilde{\nu^{\alpha_{st}}_{FPR}}$ the FPR norms on $\Cont_0^c(\mathbb{R}^{2n}\times S^1,\xi_{st})$ and $\widetilde{\Cont_0^c}(\mathbb{R}^{2n}\times S^1,\xi_{st})$ respectively.\\

\begin{theoreme}\label{geodesique shelukhin/fpr}

Let $H:\mathbb{R}^{2n}\to\mathbb{R}$ be a smooth function with compact support. If the Hessian of $H$ satifies $\underset{p\in\mathbb{R}^{2n}}\max \ |||\Hess_p(H)|||<2\pi$, where $|||\cdot|||$ denotes the operator norm, then the path $\phi_h$ generated by the Hamiltonian function 
\[\begin{aligned}
h:\mathbb{R}^{2n}\times S^1\to\mathbb{R}\ \ \ \ \ \ \
(p,z)&\mapsto H(p)
\end{aligned}\]
is a geodesic for the Shelukhin pseudo-norms $\widetilde{\nu_S^{\alpha_{st}}}$ and $\nu^{\alpha_{st}}_S$. More precisely we have
\[\mathcal{L}^{\alpha_{st}}_S(\phi_h)=\widetilde{\nu^{\alpha_{st}}_S}([\phi_h])=\nu^{\alpha_{st}}_S(\phi_h^1)=\max\{\max h;-\min h\}.\]
Moreover, for any compactly supported contactomorphism $\varphi\in\Cont_0^c(\mathbb{R}^{2n}\times S^1,\xi_{st})$ we have
\[\widetilde{\nu^{\alpha_{st}}_{FPR}}\left([\varphi\circ\phi_h\circ\varphi^{-1}]\right)=\nu^{\alpha_{st}}_{FPR}\left(\varphi\circ\phi_h^1\circ\varphi^{-1}\right)=\max\left\{\lceil\max h\rceil,\lceil -\min h\rceil\right\}.\]
\end{theoreme}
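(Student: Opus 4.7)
The plan is to establish matching upper and lower bounds for each of the four (pseudo-)norms appearing in the statement, making essential use of the presumably analogous earlier theorem for the discriminant and oscillation norms together with the comparison inequalities between the four norms discussed in section~\ref{section 2}. Since $h(p,z)=H(p)$ is autonomous, the Shelukhin length of $\phi_h$ itself is
\[\mathcal{L}_S^{\alpha_{st}}(\phi_h)=\int_0^1\max_{(p,z)}|h(p,z)|\,dt=\max\{\max h,-\min h\},\]
and since $\phi_h$ is admissible for the infima defining both $\widetilde{\nu_S^{\alpha_{st}}}([\phi_h])$ and $\nu_S^{\alpha_{st}}(\phi_h^1)$, this immediately yields the chain of upper bounds $\nu_S^{\alpha_{st}}(\phi_h^1)\le\widetilde{\nu_S^{\alpha_{st}}}([\phi_h])\le\mathcal{L}_S^{\alpha_{st}}(\phi_h)=\max\{\max h,-\min h\}$.

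For the FPR upper bound I would use the conjugation invariance of $\nu_{FPR}^{\alpha_{st}}$ to reduce to $\phi_h^1$ itself, and then exhibit an explicit decomposition into $N=\max\{\lceil\max h\rceil,\lceil-\min h\rceil\}$ atoms each counted with unit FPR weight. A natural candidate is the autonomous factorization $\phi_h^1=(\phi_{h/N}^1)^N$: here the rescaled Hamiltonian $h/N$ has sup-norm at most $1$ and thus fits the unit FPR prescription, so each factor contributes at most one to the norm.

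The lower bounds form the heart of the argument. The hypothesis $\max_p|||\Hess_p(H)|||<2\pi$ forces the symplectic Hamiltonian flow of $H$ on $(\mathbb{R}^{2n},\omega_{st})$ to admit no non-constant $1$-periodic orbit, so that for every $t\in(0,1]$ the translated points of $\phi_h^t$ with respect to $\alpha_{st}$ are exactly the critical points of $H$, each carrying translation parameter $H(p_*)$. Feeding this description of the translated points into the presumed earlier theorem of the paper identifies $\widetilde{\nu_{osc}^{\alpha_{st}}}([\phi_h])=\nu_{osc}^{\alpha_{st}}(\phi_h^1)=\max\{\max h,-\min h\}$ and gives the analogous integer-valued formula for the discriminant norm. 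The comparison inequalities $\nu_{osc}^{\alpha_{st}}\le\nu_S^{\alpha_{st}}$ and an analogous integer-valued comparison relating the discriminant and FPR norms, both established in section~\ref{section 2}, then yield the required lower bounds and close the gap with the upper bounds above.

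I expect the principal obstacle to lie in translating the operator-norm bound $|||\Hess_p(H)|||<2\pi$ into the sharp dynamical statement ruling out non-trivial closed orbits of the flow of $H$ on $\mathbb{R}^{2n}$ for all times in $[0,1]$. A Gr\"onwall-type estimate on the linearized flow is the natural tool, with the constant $2\pi$ appearing as the threshold beyond which a harmonic-oscillator rotation would complete a full turn. Any such extra closed orbit would generate spurious translated points of $\phi_h^t$ and spoil the sharpness of the lower bound, so this quantitative step is the essential technical input of the proof.
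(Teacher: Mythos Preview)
Your upper-bound arguments are fine, but the lower-bound strategy has a genuine gap. You propose to deduce the Shelukhin lower bound from the discriminant/oscillation theorem together with a comparison inequality $\nu_{osc}\le\nu_S^{\alpha_{st}}$ allegedly from section~\ref{section 2}. Three things go wrong. First, the discriminant/oscillation theorem (Theorem~\ref{geodesique discriminante/oscillation}) carries extra hypotheses that are absent here: it requires $0$ to be a regular value of $H$ in the interior of its support, and for the oscillation statement it requires $H$ to have a sign. Second, the oscillation and discriminant norms are integer-valued: Theorem~\ref{geodesique discriminante/oscillation} gives $\nu_{osc}(\phi_h^1)=\lfloor\max h\rfloor+1$, not $\max\{\max h,-\min h\}$ as you wrote. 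Third, and decisively, no inequality $\nu_{osc}\le\nu_S^{\alpha_{st}}$ is established in section~\ref{section 2}; in fact the two computed values show it is \emph{false} on the very paths under consideration (e.g.\ if $\max h=1.5$ then $\nu_{osc}=2>1.5=\nu_S^{\alpha_{st}}$), so this route cannot work.

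The paper obtains the lower bound directly from Sandon's translation selector $c$, without passing through the other norms. The key analytic input is Proposition~\ref{inégalité géodésique shelukhin}: for \emph{any} Hamiltonian $k$ generating $\phi$ one has $c(\phi)\le\int_0^1\max k^t\,dt$ and $c(\phi^{-1})\le-\int_0^1\min k^t\,dt$. Taking the infimum over generating Hamiltonians yields $\max\{c(\phi),c(\phi^{-1})\}\le\nu_S^{\alpha_{st}}(\phi)$ (Corollary~\ref{corollaire shelukhin}). The Hessian bound is then used, via the no-short-periodic-orbits Lemma~\ref{petite hessienne} you correctly anticipated, to compute $c(\phi_h^1)=\max H$ and $c((\phi_h^1)^{-1})=-\min H$ exactly (Proposition~\ref{calcul explicite}); a continuity-plus-Sard argument pins down which spectral value is selected. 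This matches the upper bound and closes the proof. The FPR case is identical, with ceilings throughout; the upper bound comes simply from using $\phi_h$ itself as a representative in the definition (your factorization is unnecessary), and conjugation invariance handles the $\varphi$-conjugate.
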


\vspace{3.ex}

In Theorem \ref{geodesique shelukhin/fpr} below $\nu_d$, $\widetilde{\nu_d}$ and $\nu_{osc}$, $\widetilde{\nu_{osc}}$ denote respectively the discriminant and oscillation norms on $\Cont_0^c(\mathbb{R}^{2n}\times S^1,\xi_{st})$ and $\widetilde{\Cont_0^c}(\mathbb{R}^{2n}\times S^1,\xi_{st})$ respectively. The notations $\mathcal{L}_d$ and $\mathcal{L}_{osc}$ stand respectively for the discriminant length functional and the oscillation length functional that we define in section \ref{section 2}.\\

\begin{theoreme}\label{geodesique discriminante/oscillation}

Let $H:\mathbb{R}^{2n}\to\mathbb{R}$ be a smooth function with compact support such that $0$ is a regular value of $H$ in the interior of its support, i.e for all $x\in\Interior\left(\Supp(H)\right)\cap H^{-1}\{0\},\ d_xH\ne 0$. Suppose that the Hessian of $H$ satisfies $\underset{p\in\mathbb{R}^{2n}}\max \ |||\Hess_p(H)|||<2\pi$, and consider the path $\phi_h$ generated by the Hamiltonian function 
\[\begin{aligned}
h:\mathbb{R}^{2n}\times S^1\to\mathbb{R}\ \ \ \ \ \ \
(p,z)&\mapsto H(p).
\end{aligned}\]
Then for any $\varphi\in\Cont_0^c(\mathbb{R}^{2n}\times S^1,\xi_{st})$ the path $\varphi\circ\phi_h\circ\varphi^{-1}$ is a geodesic for the discriminant norms $\widetilde{\nu_d}$ and $\nu_d$. More precisely we have
\[\mathcal{L}_d(\varphi\circ\phi_h\circ\varphi^{-1})=\widetilde{\nu_d}([\varphi\circ\phi_h\circ\varphi^{-1}])=\nu_d(\varphi\circ\phi_h^1\circ\varphi^{-1})=\max\{\lfloor \max h\rfloor+1,\lfloor -\min h\rfloor +1\}\ .\]
If we assume moreover that $H :\mathbb{R}^{2n}\to\mathbb{R}_{\geq 0}$ is non-negative (resp.\ $H :\mathbb{R}^{2n}\to\mathbb{R}_{\leq 0}$  is non-positive), then the path $\phi\circ\phi_h\circ\phi^{-1}$ is a geodesic for the oscillation norms $\widetilde{\nu_{osc}}$ and $\nu_{osc}$. More precisely
\[\begin{aligned}\mathcal{L}_{osc}(\varphi\circ\phi_h\circ\varphi^{-1})=\widetilde{\nu_{osc}}([\varphi\circ\phi_h\circ\varphi^{-1}])=\nu_{osc}(\varphi\circ\phi_h^1\circ\varphi^{-1})=&\lfloor \max h\rfloor+1\\
( \text{resp.\ }\mathcal{L}_{osc}(\varphi\circ\phi_h\circ\varphi^{-1})=\widetilde{\nu_{osc}}([\varphi\circ\phi_h\circ\varphi^{-1}])=\nu_{osc}(\varphi\circ\phi_h^1\circ\varphi^{-1})= &\lfloor-\min h\rfloor +1).\end{aligned}\]
\end{theoreme}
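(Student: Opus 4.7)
The plan is to first reduce to the case $\varphi=\Id$: all six quantities $\mathcal{L}_d$, $\widetilde{\nu_d}$, $\nu_d$, $\mathcal{L}_{osc}$, $\widetilde{\nu_{osc}}$, $\nu_{osc}$ are invariant under conjugation by a fixed element of $\Cont_0^c(\mathbb{R}^{2n}\times S^1,\xi_{st})$, because conjugation preserves discriminant points and commutes with the length construction of section \ref{section 2}. Setting $N:=\max\{\lfloor \max h\rfloor+1,\,\lfloor -\min h\rfloor+1\}$ and exploiting the general chain $\mathcal{L}_d(\phi_h)\geq\widetilde{\nu_d}([\phi_h])\geq\nu_d(\phi_h^1)$, the task reduces to proving the upper bound $\mathcal{L}_d(\phi_h)\leq N$ together with the matching lower bound $\nu_d(\phi_h^1)\geq N$.

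For the upper bound I would first analyse the flow. Since $h(p,z)=H(p)$ is independent of the Reeb coordinate, $\phi_h^t$ is a strict contactomorphism, its projection to $\mathbb{R}^{2n}$ is the symplectic Hamiltonian flow of $H$, and the $z$-component evolves along each orbit by $H(p)-y\cdot\partial_yH(p)$. The bound $|||\Hess_p H|||<2\pi$ ensures that for every $t\in[0,1]$ the only fixed points of the projected flow are the critical points of $H$, so the discriminant points of $\phi_h^t$ are exactly the $(p^\ast,z)$ with $dH(p^\ast)=0$ and $tH(p^\ast)\in\mathbb{Z}$. I would then partition $[0,1]$ into $N$ subintervals aligned with the discriminant times coming from the critical points attaining $\max H$ and $\min H$, and show that on each piece the restricted isotopy has no interior discriminant transition in the sense of $\mathcal{L}_d$. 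The hypothesis that $0$ is a regular value of $H$ in the interior of its support is what rules out spurious contributions from zero-value interior critical points. Each of the $N$ pieces contributes length $1$, yielding $\mathcal{L}_d(\phi_h)\leq N$.

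The matching lower bound is the main obstacle. The strategy is to use integer-valued translated-point selectors $c^\pm$ on $\widetilde{\Cont_0^c}(\mathbb{R}^{2n}\times S^1,\xi_{st})$, in the spirit of Sandon's invariants \cite{sandonmetrique}: these are $\widetilde{\nu_d}$-Lipschitz functionals that behave compatibly with integer translations along the lifted Reeb flow and whose values on $[\phi_h]$ can, thanks to the small-Hessian assumption, be read off directly from the critical values of $H$. Concretely, the Hessian bound forces $\phi_h^1$ to admit a generating function quadratic at infinity whose critical points biject with those of $H$ and whose critical values equal the values of $H$ at its critical points; from this one extracts $c^+([\phi_h])\geq\lfloor\max h\rfloor+1$ and $c^-([\phi_h])\leq-(\lfloor-\min h\rfloor+1)$. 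Combining with the Lipschitz property yields $\widetilde{\nu_d}([\phi_h])\geq N$, and the bound descends to $\nu_d(\phi_h^1)$ since the relevant deck-invariant combinations of $c^\pm$ factor through $\Cont_0^c(\mathbb{R}^{2n}\times S^1,\xi_{st})$ and control the norm on the quotient equally.

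For the oscillation statement the same scheme applies one-sidedly: when $H\geq 0$ the path $\phi_h$ is a positive contact isotopy, only $c^+$ enters the oscillation length, and the computation gives $\mathcal{L}_{osc}(\phi_h)=\lfloor\max h\rfloor+1$; the case $H\leq 0$ is symmetric via $c^-$. The conjugated path $\varphi\circ\phi_h\circ\varphi^{-1}$ then inherits all three equalities from the $\varphi=\Id$ case by the initial conjugation-invariance step.
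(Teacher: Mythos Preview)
Your plan matches the paper's strategy: reduce to $\varphi=\Id$ by conjugation invariance, compute $\mathcal{L}_d(\phi_h)$ by identifying the discriminant points of $\phi_h^t$ as the pairs $(p,z)$ with $d_pH=0$ and $tH(p)\in\mathbb{Z}$ (via the Hessian bound and the regular-value hypothesis; the paper packages this as Lemma~\ref{longueur autonome}, giving $\mathcal{L}_d(\phi_h)=\lfloor 1/t_0\rfloor+1$ with $t_0=1/\max\{\max H,-\min H\}$), and bound $\nu_d(\phi_h^1)$ from below using Sandon's translation selector.

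Two details differ from what you sketch. First, the paper works with a single \emph{real-valued} selector $c$ on $\Cont_0^c$ (applied to $\phi_h^1$ and to $(\phi_h^1)^{-1}$), not a pair of integer-valued selectors on the universal cover. The exact values $c(\phi_h^1)=\max H$ and $c((\phi_h^1)^{-1})=-\min H$ are obtained in Proposition~\ref{calcul explicite} by a continuity-in-$t$ argument: for $t$ small the Legendrian $\Lambda_{\phi_h^t}$ is a section, so its generating function has no fibre variables and one reads off $c(\phi_h^t)=t\max H$ directly, then extends to $t=1$ by continuity and Sard. This is simpler than analysing a generating function quadratic at infinity for $\phi_h^1$ as you propose.

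Second, and more importantly, the lower bound $\nu_d(\phi)\geq\lfloor c(\phi)\rfloor+1$ for $\phi\neq\Id$ (Proposition~\ref{crucial}) is sharper than a generic ``Lipschitz'' estimate, and this is where your sketch hides a genuine subtlety. If one only uses the triangular inequality $\lceil c(\phi)\rceil\leq\sum_i\lceil c(\phi_i^1)\rceil$ on a word decomposition of discriminant length $k$, one obtains $\nu_d(\phi)\geq\lceil c(\phi)\rceil$, which falls short by exactly $1$ when $c(\phi)\in\mathbb{N}$ --- and under your hypotheses $c(\phi_h^1)=\max H$ may well be an integer. The paper closes this case with a short contradiction argument that uses the refined sub-additivity $\lceil c(\varphi)\rceil\geq\lceil c(\phi\varphi)-c(\phi)\rceil$ from Theorem~\ref{translation selector} together with the observation that each factor of discriminant length $1$ has $c(\phi_i^1)\in[0,1)$ (since $c(\phi_i^1)=1$ would force a $1$-translated, hence discriminant, point). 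Your ``$\widetilde{\nu_d}$-Lipschitz'' shortcut would need exactly this argument to be made rigorous; without it the matching lower bound is not established in the integer case. The oscillation half of Proposition~\ref{crucial} requires the same care plus one additional ingredient: non-positive pieces contribute $c=0$ by the monotonicity in Theorem~\ref{translation selector}, so only the positive pieces count.
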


\vspace{3.ex}

\begin{corollaire}\label{corollaire 1}
The norms $\nu^{\alpha_{st}}_{FPR}$, $\nu^{\alpha_{st}}_S$, $\nu_d$, $\nu_{osc}$ are unbounded on $\Cont_0^c(\mathbb{R}^{2n}\times S^1,\xi_{st})$.
\end{corollaire}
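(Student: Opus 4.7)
The plan is to exhibit, for each norm, a family of contactomorphisms whose norms can be made arbitrarily large. Since Theorems \ref{geodesique shelukhin/fpr} and \ref{geodesique discriminante/oscillation} express the four norms in question evaluated at $\phi_h^1$ in terms of $\max h$ and $-\min h$, it suffices to construct a sequence of Hamiltonians $H_k:\mathbb{R}^{2n}\to\mathbb{R}$ satisfying the hypotheses of both theorems and with $\max H_k\to+\infty$.

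The key idea is to rescale the domain while rescaling the value appropriately so that the $C^2$ bound $\max_p|||\Hess_p(H)|||<2\pi$ is preserved. Concretely, I would fix once and for all a smooth non-negative bump function $\rho:\mathbb{R}^{2n}\to\mathbb{R}_{\geq 0}$ that is strictly positive on an open ball and identically zero outside its closure, and set $A:=\max_p|||\Hess_p(\rho)|||$. For each $k\in\mathbb{N}_{>0}$ and each $\varepsilon>0$, consider
\[H_{k,\varepsilon}(p)\,:=\,C_{k,\varepsilon}\cdot\rho(\varepsilon p),\qquad\text{with }C_{k,\varepsilon}:=\frac{\pi}{\varepsilon^2 A}.\]
A direct chain-rule computation gives $|||\Hess_p(H_{k,\varepsilon})|||\leq C_{k,\varepsilon}\varepsilon^2 A=\pi<2\pi$, so the Hessian hypothesis of both theorems holds. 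Moreover $\max H_{k,\varepsilon}=\frac{\pi\max\rho}{\varepsilon^2 A}$, which can be made larger than $k$ by choosing $\varepsilon$ sufficiently small. Finally, since $\rho>0$ on the interior of its support, the same holds for $H_{k,\varepsilon}$, so $\Interior(\Supp(H_{k,\varepsilon}))\cap H_{k,\varepsilon}^{-1}\{0\}=\emptyset$ and the regular-value condition in Theorem \ref{geodesique discriminante/oscillation} is vacuously satisfied. Because $H_{k,\varepsilon}\geq 0$, the oscillation statement applies as well.

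Let $H_k:=H_{k,\varepsilon_k}$ for $\varepsilon_k$ small enough that $\max H_k\geq k$, and let $h_k(p,z):=H_k(p)$ with $\phi_{h_k}$ the generated path. Applying Theorem \ref{geodesique shelukhin/fpr} with $\varphi=\Id$ yields
\[\nu_S^{\alpha_{st}}(\phi_{h_k}^1)=\max H_k\geq k\quad\text{and}\quad\nu_{FPR}^{\alpha_{st}}(\phi_{h_k}^1)=\lceil\max H_k\rceil\geq k,\]
while Theorem \ref{geodesique discriminante/oscillation} with $\varphi=\Id$ yields
\[\nu_d(\phi_{h_k}^1)=\lfloor\max H_k\rfloor+1\geq k\quad\text{and}\quad\nu_{osc}(\phi_{h_k}^1)=\lfloor\max H_k\rfloor+1\geq k.\]
Since $k$ is arbitrary, each of the four norms is unbounded on $\Cont_0^c(\mathbb{R}^{2n}\times S^1,\xi_{st})$.

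There is no real obstacle here: the entire content is packed into Theorems \ref{geodesique shelukhin/fpr} and \ref{geodesique discriminante/oscillation}, and the only thing to verify is that the Hessian constraint does not force $\max H$ to be bounded. The mild point to be careful about is choosing $\rho$ so that the regular-value hypothesis in Theorem \ref{geodesique discriminante/oscillation} is satisfied, which is why I take $\rho$ strictly positive on the interior of its support so that the set of interior zeros is empty.
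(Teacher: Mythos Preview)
Your proof is correct and follows essentially the same approach as the paper: both construct a non-negative bump Hamiltonian with arbitrarily large maximum while keeping the Hessian below $2\pi$ by dilating the domain, then invoke Theorems \ref{geodesique shelukhin/fpr} and \ref{geodesique discriminante/oscillation}. The only cosmetic difference is that the paper writes down an explicit Gaussian-type bump and scales via a parameter $B$, whereas you use an abstract bump $\rho$ and the rescaling $p\mapsto\varepsilon p$; the underlying mechanism (Hessian scales like $\varepsilon^2$ while the maximum scales like $\varepsilon^{-2}$) is identical.
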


\begin{proof}
Let $B>0$ be a positive number and consider the smooth compactly supported function 
\[\begin{aligned}
f_B :\mathbb{R} &\to\mathbb{R}_{\geq 0}\\
x &\mapsto 
     \left\{
    \begin{array}{ll}
        e^{-B^2(x^2-B)^{-2}} & \text{ if } x\in[-\sqrt{B},\sqrt{B}]  \\
        0 & \text{ if }|x|>\sqrt{B}.
    \end{array}
\right.
\end{aligned}\]
Denoting by $\lVert \cdot\rVert$ the standard Euclidean norm on $\mathbb{R}^{2n}$, for any $A>0$ the smooth non-negative, compactly supported Hamiltonian function 
\[h_{B,A} :\mathbb{R}^{2n}\times S^1\to\mathbb{R}_{\geq 0},\ \ \ \ \ \ (p,z)\mapsto \frac{A}{f_B(0)}f_B\left(\lVert p\rVert\right) \]
is independent of the $S^1$-coordinate and satisfies $h_{B,A}(0,z)=\max h_{B,A}=A$ for all $z\in S^1$. Note also that $h_{B,A}$ does not vanish inside the interior of its support. Moreover an easy computation shows that there exists an increasing function $B_0 :\mathbb{R}_{>0}\to\mathbb{R}_{>0}$ such that $\underset{p\in\mathbb{R}^{2n}}\max |||\Hess_p h_{B,A}(\cdot,z)|||<2\pi$ for any $B\geq B_0(A)$ and $z\in S^1$. So by Theorem \ref{geodesique shelukhin/fpr} and Theorem \ref{geodesique discriminante/oscillation}, the contactomorphism $\phi:=\phi_{h_{B_0(A),A}}^1$ satifies \[\nu_{osc}(\phi)=\nu_d(\phi)=\left\lfloor A\right\rfloor +1,\ \ \ \  \nu^{\alpha_{st}}_{FPR}(\phi)=\left\lceil A\right\rceil\ \  \text{ and }\ \  \nu^{\alpha_{st}}_{S}(\phi)=A.\] Since $A$ can be choosen as big as we want, we deduce Corollary \ref{corollaire 1}. Note that the diameter of the support of $h_{B_0(A),A}$ grows with $A$.\end{proof}

Unboundedness of these norms on $\Cont_0^c(\mathbb{R}^{2n}\times S^1,\xi_{st})$ was actually already known \cite{discriminante}, \cite{FPR}, \cite{shelukhin}. The novelty of our result is that given any positive number $A>0$ one can construct an explicit Hamiltonian function such that the norm of the time-one of the path generated by this Hamiltonian function is exactly $A$.\\

Even if these norms do not \textit{a priori} measure the same thing it turns out that they almost agree for the contactomorphisms described in Theorems \ref{geodesique shelukhin/fpr} and \ref{geodesique discriminante/oscillation}. It seems then reasonnable to ask whether these norms are equivalent. Similar questions can be found in \cite{FPR}, \cite{survey}.\\

Another natural question that arises from these results comes from the similarity with the geodesics of Hofer geometry. Indeed, it is well known that a time independent compactly supported function $H:\mathbb{R}^{2n}\to\mathbb{R}$ with small Hessian generates a path of Hamiltonian symplectomorphisms of the standard symplectic Euclidean space $(\mathbb{R}^{2n},{\omega_{st}})$ that is a geodesic for the Hofer norm \cite{bialypolterovich}, \cite{hoferzehnder}. So the above theorems say that these geodesics can be lifted to paths of contactomorphisms that are still geodesics for the Shelukhin norm, and under some more assumptions, geodesics for the discriminant norms and the oscillation norm too. It would be interesting to know which are the geodesics of the Hofer norm that lift to geodesics of $\nu_S^{\alpha_{st}}$, $\nu_d$ and $\nu_{osc}$.\\

The main tool to prove Theorems \ref{geodesique shelukhin/fpr} and \ref{geodesique discriminante/oscillation} is the translation selector $c :\text{Cont}_0^c(\mathbb{R}^{2n}\times S^1,\xi_{st})\to\mathbb{R}_{\geq 0}$ constructed by Sandon using generating functions \cite{sandonthese}. The strategy for the proof is to first bound all these norms from below in terms of this translation selector (see Corollary \ref{corollaire shelukhin} and Proposition \ref{crucial}) and in a second time to show that the paths we are considering realize this lower bound.\\

Before moving to the next section, it is interesting to note that Sandon in \cite{sandonthese} use this translation selector to associate to any domain $U\subset\mathbb{R}^{2n}\times S^1$ the following number
\[c(U):=\sup\left\{c(\phi)\ |\ \phi\in\Cont_0^c(U,\xi_{st})\right\}, \]
where $\Cont_0^c(U,\xi_{st})$ is the set of time-one maps of smooth paths whose supports are compacts which lie inside $U$. The integer part of this function turns out to be a contact capacity, more precisely we have
\begin{enumerate}
    \item $c(U)\leq c(V)$ for any $U\subset V$ lying inside $\mathbb{R}^{2n}\times S^1$, 
    \item $\left\lceil c(U)\right\rceil=\left\lceil c(\phi(U))\right\rceil$ for any $\phi\in\Cont_0^c(\mathbb{R}^{2n}\times S^1,\xi_{st})$ and any $U\subset\mathbb{R}^{2n}\times S^1$. 
    \item $c(\mathcal{B}^{2n}(r)\times S^1)=c(\mathcal{B}^2(r)\times\mathbb{R}^{2n-2}\times S^1)=\pi r^2$ where $\mathcal{B}^{2n}(r)$ and $\mathcal{B}^2(r)$ denote the standard Euclidean ball of radius $r$ centered at $0$ lying inside $\mathbb{R}^{2n}$ and $\mathbb{R}^2$ respectively.
\end{enumerate}

\noindent
In the same paper \cite{sandonthese} it is shown that if a contactomorphism $\phi\in\Cont_0^c(\mathbb{R}^{2n}\times S^1,\xi_{st})$ displaces $U\subset \mathbb{R}^{2n}\times S^1$, i.e.\ $\phi(U)\cap U=\emptyset$, then 
\begin{equation}\label{sandoninegalite}
\left\lceil c(U)\right\rceil\leq \left\lceil c(\phi)\right\rceil+\left\lceil c(\phi^{-1})\right\rceil.
\end{equation}

Corollary \ref{corollaire shelukhin} and Proposition \ref{crucial} will allow us to have similar results for the norms we are considering:

\begin{proposition}\label{capacite-energie}
Let $\phi\in\Cont_0^c(\mathbb{R}^{2n}\times S^1,\xi_{st})$ be a contactomorphism that displaces $U\subset\mathbb{R}^{2n}\times S^1$, i.e.\ $\phi(U)\cap U=\emptyset$. Then $\left\lceil \nu(\phi)\right\rceil \geq\frac{1}{2}\left\lceil c(U)\right\rceil $ where $\nu$ denotes the FPR, discriminant, oscillation or Shelukhin norm.\\ 
\end{proposition}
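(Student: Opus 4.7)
The plan is to combine two ingredients which are already available. The first is the lower bound on the four norms in terms of Sandon's translation selector $c$, provided by the earlier Corollary \ref{corollaire shelukhin} for the Shelukhin norm and by Proposition \ref{crucial} for the FPR, discriminant and oscillation norms. These results give, for $\nu$ equal to any one of $\nu_{FPR}^{\alpha_{st}}$, $\nu_S^{\alpha_{st}}$, $\nu_d$, $\nu_{osc}$, the inequality $\lceil \nu(\phi)\rceil \geq \lceil c(\phi)\rceil$ for every $\phi\in\Cont_0^c(\mathbb{R}^{2n}\times S^1,\xi_{st})$. The second ingredient is Sandon's displacement estimate (\ref{sandoninegalite}), recalled above from \cite{sandonthese}, which states that whenever $\phi$ displaces $U$ one has $\lceil c(U)\rceil \leq \lceil c(\phi)\rceil + \lceil c(\phi^{-1})\rceil$.

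Granted these, the proof is essentially one line. Each of the four quantities is a (pseudo-)norm on the relevant group, so by symmetry $\nu(\phi^{-1})=\nu(\phi)$, and in particular $\lceil \nu(\phi^{-1})\rceil = \lceil \nu(\phi)\rceil$. Applying the lower bound to both $\phi$ and $\phi^{-1}$ and summing the two resulting inequalities gives
\[ 2\lceil \nu(\phi)\rceil \;=\; \lceil \nu(\phi)\rceil+\lceil \nu(\phi^{-1})\rceil \;\geq\; \lceil c(\phi)\rceil+\lceil c(\phi^{-1})\rceil \;\geq\; \lceil c(U)\rceil, \]
which is exactly the desired conclusion.

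So in the present proposition no genuine obstacle remains; everything has been outsourced to the earlier comparison between the norms and the translation selector. The real difficulty lies upstream, in the proofs of Corollary \ref{corollaire shelukhin} and Proposition \ref{crucial}: one must verify, for each of the four norms, that the translation selector $c(\phi)$ is dominated by (the ceiling of) $\nu(\phi)$. For the integer-valued norms $\nu_{FPR}^{\alpha_{st}}$ and $\nu_d$ this is expected to follow from the generating-function construction of $c$ together with the fact that a contactomorphism with small norm can be written as a product of few pieces each having translation selector at most $1$; for the real-valued norms $\nu_S^{\alpha_{st}}$ and $\nu_{osc}$ one instead bounds $c$ continuously along a path by the corresponding length functional, and then takes the infimum over paths. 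Taking ceilings at the end of both types of argument gives the uniform statement used here.
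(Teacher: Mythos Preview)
Your proof is correct and follows exactly the same approach as the paper's own proof: combine the lower bounds from Corollary~\ref{corollaire shelukhin} and Proposition~\ref{crucial} with Sandon's displacement inequality~\eqref{sandoninegalite}, using the symmetry $\nu(\phi)=\nu(\phi^{-1})$. One small attribution slip: the FPR bound actually comes from Corollary~\ref{corollaire shelukhin}, not from Proposition~\ref{crucial} (which covers only the discriminant and oscillation norms).
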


In the next section we give the basic definitions of the group of contactomorphisms, its universal cover,  conjugation invariant norms coming from length functionals and their geodesics. In the third section we recall the definition of the discriminant norm, the oscillation norm, the Shelukhin norm and the FPR norm. Then in the fourth section we give the main steps of the construction of Sandon's translation selector. Finally in the last section we give the proofs of Theorems \ref{geodesique shelukhin/fpr}, \ref{geodesique discriminante/oscillation} and Proposition \ref{capacite-energie}.\\

\textbf{Acknowledgement -} This project is a part of my PhD thesis done under the supervision of Sheila Sandon. I am very grateful to her for introducing me to this beautiful subject of research. I thank her for all the interesting discussions that we had without which this work would not have existed. I would also like to thank Miguel Abreu, Jean-François Barraud and Mihai Damian for their support and their useful advices.  The author is partially supported by the Deutsche Forschungsgemeinschaft under the Collaborative Research Center SFB/TRR 191 - 281071066 (Symplectic Structures in Geometry, Algebra and Dynamics).

\section{Basic definitions}\label{section 1}

\noindent
Let $M$ be a connected manifold of dimension $2n+1$ endowed with a co-oriented contact distribution $\xi$, i.e.\ a distribution of hyperplanes given by the kernel of a $1$-form $\alpha\in\Omega^1(M)$ such that $\alpha\wedge (d\alpha)^n$ is a volume form. We say that a diffeomorphism  $\phi\in\text{Diff}(M)$ is a contactomorphism if it preserves the contact distribution, i.e.\  $d_x\phi(\xi_x)=\xi_{\phi(x)}$ for all $x$ in $M$ or equivalently if there exists a smooth function $f: M\to\mathbb{R}\setminus\{0\}$ such that $\phi^*\alpha=f\alpha$. We say that a $[0,1]$-family of contactomorphisms $\{\phi^t\}_{t\in[0,1]}$ is a smooth path of contactomorphisms if the map $[0,1]\times M\to M$,
$(t,x)\mapsto \phi^t(x)$ is smooth. From now on we denote a smooth path of contactomorphisms by $\{\phi^t\}$ and omit the subscript $t\in[0,1]$. We will study the set of all contactomorphisms $\phi$ of $M$ that can be joined to the identity by a smooth path of compactly supported contactomorphisms $\{\phi^t\}$, that is to say that $\phi^0=\Id$, $\phi^1=\phi$ and $\Supp(\{\phi^t\}):=\overline{\underset{t\in[0,1]}\bigcup\{x\in M\ |\ \phi^t(x)\ne x\}}$ is a compact subset of $M$. This set is a group with respect to composition and we denote it by $\Cont^c_0(M,\xi)$. Another group that will be of interest is the universal cover of $\Cont^c_0(M,\xi)$: the set of classes of smooth compactly supported paths starting from the identity, where two different paths are identified if they are homotopic with fixed endpoints. More precisely, denoting by $\mathcal{C}^\infty([0,1],\Cont_0^c(M,\xi))$ the set of smooth paths of compactly supported contactomorphisms, we define $\widetilde{\Cont^c_0}(M,\xi):=\left\{\{\phi^t\}\in\mathcal{C}^\infty([0,1],\Cont_0^c(M,\xi))\ |\ \phi^0=\Id\right\} /\sim$, where $\{\phi^t\}\sim\{\varphi^t\}$ if and only if \begin{enumerate}
    
    \item they both finish at the same point, i.e.\ $\phi^1=\varphi^1$
    
    \item there exists a smooth function
    \[\begin{aligned}
    \Psi : [0,1]\times [0,1]\times M&\to M\\
    (s,t,x)&\mapsto \Psi_s^t(x)\ 
    \end{aligned}\]
    such that
    \begin{enumerate}
    \item for all $s\in[0,1]$, $\{\Psi_s^t\}_{t\in[0,1]}$ is a smooth path of contactomorphisms starting at identity and ending at $\phi^1=\varphi^1$
       
        \item for all $t\in[0,1]$, $\Psi_0^t=\phi^t$ and $\Psi_1^t=\varphi^t$.
    \end{enumerate}
\end{enumerate}

\noindent
The application 
\[\begin{aligned}
* : \widetilde{\Cont^c_0}(M,\xi)\times\widetilde{\Cont^c_0}(M,\xi)&\to\widetilde{\Cont^c_0}(M,\xi)\\
([\{\phi^t\}],[\{\varphi^t\}])&\mapsto [\{\phi^t\}]*[\{\varphi^t\}]:= [\{\phi^t\circ\varphi^t\}]
\end{aligned}\]
is well defined, and provides a group law on $\widetilde{\Cont^c_0}(M,\xi)$. Using the fact that time reparametrisation acts trivially on $\widetilde{\Cont^c_0}(M,\xi)$, one can show that this group law coincides with the concatenation of paths. More precisely let us fix a smooth bijective and increasing function $a : [0,1]\to[0,1]$ such that all the derivatives of $a$ at the time $0$ and $1$ vanish, and define for all smooth paths $\{\phi^t\}$, $\{\varphi^t\}$ contained in $\Cont_0^c(M,\xi)$ the concatenated path $\{\varphi^t\}\cdot\{\phi^t\}$ by
\[\{\varphi^t\}\cdot\{\phi^t\}:= \left\{
    \begin{array}{ll}
        \phi^{a(2t)} & \text{ if } t\in[0,1/2]  \\
        \varphi^{a(2t-1)}\circ\phi^1 & \text{ if }t\in[1/2,1].
    \end{array}
\right.\]
Then $[\{\phi^t\}\cdot\{\psi^t\}]=[\{\phi^t\}]*[\{\psi^t\}]$. Moreover for all $[\{\phi^t\}]$, $[\{\varphi^t\}]\in\widetilde{\Cont_0^c}(M,\xi)$ we have
\begin{enumerate}
    \item $[\{\phi^t\}]^{-1}=[\{\phi^{1-t}\circ(\phi^1)^{-1}\}]=[\{(\phi^t)^{-1}\}]$
    \item $[\{\varphi^t\}][\{\phi^t\}][\{\varphi^t\}]^{-1}=[\{\varphi^1\circ\phi^t\circ(\varphi^1)^{-1}\}]$.\\
\end{enumerate}

\begin{remarque}
     By putting the strong $\mathcal{C}^\infty$-topology on the group of compactly supported contactomorphisms $\Cont^c(M,\xi)$, the identity component of $\Cont^c(M,\xi)$ corresponds to $\Cont^c_0(M,\xi)$. Moreover this component is a sufficiently pleasant topological space so that its universal cover exists and can naturally be identified to $\widetilde{\Cont^c_0}(M,\xi)$. For more details we refer to the book of Banyaga \cite{banyagachemin}. \\
\end{remarque}

We will study in this paper four different norms on the groups $\text{Cont}_0^c(\mathbb{R}^{2n}\times S^1,\xi_{st})$ and $\widetilde{\Cont_0^c}(\mathbb{R}^{2n}\times S^1,\xi_{st})$. Three of them will be conjugation invariant.\\

\begin{definition}
Let $G$ be a group. We say that an application $\nu : G\to \mathbb{R}_{\geq 0}$ is a norm if for all $h,g\in G$ it satisfies:
\begin{enumerate}
    \item the triangular inequality, i.e.\ $\nu(gh)\leq \nu(g)+\nu(h)$ 
    \item symmetry, i.e.\ $\nu(g)=\nu(g^{-1})$
    \item non-degeneracy, i.e.\ $\nu(g)=0$ if and only if $g$ is the neutral element of the group.
\end{enumerate}
If $\nu$ satisfies only the first and second property we say that $\nu$ is a pseudo-norm. Moreover we say that $\nu$ is conjugation invariant if $\nu(hgh^{-1})=\nu(g)$.\\
\end{definition}

\begin{remarque}
  Another way to study conjugation invariant norms on a group $G$ is given by the point of view of bi-invariant metrics $d : G\times G\to\mathbb{R}_{\geq 0}$, i.e.\\ metrics $d$ such that $d(hg_1,hg_2)=d(g_1h,g_2h)=d(g_1,g_2)$ for all $g_1,g_2,h\in G$. Indeed to any conjugation invariant norm $\nu : G\to\mathbb{R}_{\geq 0}$ one can associate the bi-invariant metric $d(g_1,g_2):=\nu(g_1g_2^{-1})$. Conversely to any bi-invariant metric $d : G\times G\to\mathbb{R}_{\geq 0}$ one can associate the conjugation invariant norm $\nu(g)=d(\Id,g)$.\\
\end{remarque}

The norms we are interested in are of two types.\\

The first type of norms $\nu$ on $\Cont_0^c(M,\xi=\Ker(\alpha))$ and $\widetilde{\nu}$ on $\widetilde{\Cont_0^c}(M,\xi)$ we consider come from some length functional 
\[\mathcal{L} : \mathcal{C}^\infty([0,1],\Cont_0^c(M,\xi))\to\mathbb{R}_{\geq 0}\cup\{+\infty\}\] 
satisfying the following properties:
\begin{enumerate}
    \item $\mathcal{L}$ is invariant by time reparamatrisation, i.e.\ if $a :[0,1]\to[0,1]$ is a smooth bijective increasing function then $\mathcal{L}(\{\phi^t\})=\mathcal{L}(\{\phi^{a(t)}\})$
    \item $\mathcal{L}(\{\phi^t\}\cdot\{\psi^t\})\leq\mathcal{L}(\{\phi^t\})+\mathcal{L}(\{\psi^t\})$
    \item $\mathcal{L}(\{\phi^t\})>0$ for any path that is non constant
    \item $\mathcal{L}(\{\phi^{1-t}\circ(\phi^1)^{-1}\})=\mathcal{L}(\{\phi^t\})$ or $\mathcal{L}(\{(\phi^t)^{-1}\})=\mathcal{L}(\{\phi^t\})$.
\end{enumerate}

\noindent
The associated applications $\nu$ and $\widetilde{\nu}$ are then defined for any $\phi\in\Cont_0^c(M,\xi)\setminus\{\Id\}$ and for any  $[\{\phi^t\}]\in\widetilde{\Cont_0^c}(M,\xi)\setminus\{\Id\}$ by
\[\nu(\phi)=\inf\left\{\mathcal{L}(\{\phi^t\})\ |\ \phi^0=\Id\ , \phi^1=\phi)\right\}\ \ \ \text{ and }\ \ \  \widetilde{\nu}([\{\phi^t\}])=\inf\left\{\mathcal{L}(\{\varphi^t\})\ |\ [\{\varphi^t\}]=[\{\phi^t\}]\right\},\]
and $\nu(\Id)=\widetilde{\nu}(\Id)=0$.\\

Because of these properties of the length functional, the associated application $\nu$ (resp.\ $\widetilde{\nu})$ is automatically a pseudo-norm whenever $\nu(\phi)<+\infty$ for all $\phi\in\Cont_0^c(M,\xi)$ (resp.\ whenever $\widetilde{\nu}([\{\phi^t\}])<+\infty$ for all $[\{\phi^t\}]\in\widetilde{\Cont_0^c}(M,\xi)$).\\

The discriminant norms $\nu_d$, $\widetilde{\nu_d}$ and the Shelukhin norms $\nu^\alpha_S$,  $\widetilde{\nu_S^\alpha}$ defined on $\Cont_0^c(M,\xi)$ and $\widetilde{\Cont_0^c}(M,\xi)$), come respectively from the discriminant length functional $\mathcal{L}_d$ and the Shelukhin length functional $\mathcal{L}^\alpha_S$ that we will define in section \ref{section 2} below. For this type of norms we give the following definition of a geodesic.\\

\begin{definition}
A smooth compactly supported path of contactomorphisms $\{\phi^t\}$ starting at the identity is a geodesic for $\nu : \Cont_0^c(M,\xi)\to\mathbb{R}_{\geq 0}$ (resp.\ for $\widetilde{\nu} : \widetilde{\Cont_0^c}(M,\xi)\to\mathbb{R}_{\geq 0}$) if 
$\mathcal{L}(\{\phi^t\})=\nu(\phi^1)$ (resp.\ $\mathcal{L}(\{\phi^t\})=\nu([\{\phi^t\}])$).\\
\end{definition}

Following the terminology used by McDuff in \cite{geometricvariants}, the second type of norms $\nu$ on $\Cont_0^c(M,\xi)$ (resp.\ $\widetilde{\nu}$ on $\widetilde{\Cont_0^c}(M,\xi)$) we consider come from two seminorms $\widetilde{\nu_+}$ and $\widetilde{\nu_-}$ on $\widetilde{\Cont_0^c}(M,\xi)$ each one arising from some "semilength" functionals $\mathcal{L}_+, \mathcal{L}_-$. More precisely the functionals 
\[\mathcal{L}_\pm : \mathcal{C}^\infty([0,1],\Cont_0^c(M,\xi))\to\mathbb{R}_{\geq 0}\cup\{+\infty\}\]
verify the following properties:
\begin{enumerate}
    \item they are invariant by time reparametrisation, i.e.\ if $a :[0,1]\to[0,1]$ is a smooth bijective increasing function then $\mathcal{L}_\pm(\{\phi^t\})=\mathcal{L}_\pm(\{\phi^{a(t)}\})$
    \item $\mathcal{L}_\pm(\{\phi^t\}\cdot\{\psi^t\})\leq\mathcal{L}_\pm(\{\phi^t\})+\mathcal{L}_\pm(\{\psi^t\})$
      \item if $\{\phi^t\}$ is not the constant path then $\mathcal{L}_+(\{\phi^t\})>0$ or $\mathcal{L}_-(\{\phi^t\})>0$
    \item $\mathcal{L}_+(\{\phi^t\})=\mathcal{L}_-(\{\phi^{1-t}\circ(\phi^1)^{-1}\})$ or $\mathcal{L}_+(\{\phi^t\})=\mathcal{L}_-(\{(\phi^t)^{-1}\})$. 
\end{enumerate}
The seminorms $\widetilde{\nu_+}$ and $\widetilde{\nu_-}$ coming from these functionals are defined for any element $[\{\phi^t\}]\in\widetilde{\Cont_0^c}(M,\xi)$ by

\[\begin{aligned}
\widetilde{\nu_+}([\{\phi^t\}])&:=\inf\left\{\mathcal{L}_+\left(\{\varphi^t\}\right)\ |\ [\{\varphi^t\}]=[\{\phi^t\}]\right\}\\
\widetilde{\nu_-}([\{\phi^t\}])&:=-\inf\left\{\mathcal{L}_-\left(\{\varphi^t\}\right)\ |\ [\{\varphi^t\}]=[\{\phi^t\}]\right\}.
\end{aligned}\]

  From properties 2, 3 and 4 above, one can deduce that for any $[\{\phi^t\}]\in\widetilde{\Cont_0^c}(M,\xi)$ 
\begin{enumerate}
    \item $\widetilde{\nu_+}([\phi^t])=-\widetilde{\nu_-}([\{\phi^t\}]^{-1})$
    \item $\widetilde{\nu_+}\geq 0\geq \widetilde{\nu_-}$
    \item $\widetilde{\nu_+}([\{\phi^t\}][\{\psi^t\}])\leq\widetilde{\nu_+}([\{\phi^t\}])+\widetilde{\nu_+}([\{\psi^t\}])$\\
    $\widetilde{\nu_-}([\{\phi^t\}][\{\psi^t\}])\geq\widetilde{\nu_-}([\{\phi^t\}])+\widetilde{\nu_-}([\{\psi^t\}])$.\\
\end{enumerate}

\noindent
We then define the (pseudo-)norm $\widetilde{\nu}$ for all elements $[\{\phi^t\}]$ in $\widetilde{\Cont_0^c}(M,\xi)$ and the (pseudo-)norm $\nu$ for all elements $\phi$ in $\Cont_0^c(M,\xi)$ by
\[\widetilde{\nu}\left([\{\phi^t\}]\right)=\max\left\{\widetilde{\nu_+}\left([\{\phi^t\}]\right),-\widetilde{\nu_-}\left([\{\phi^t\}]\right)\right\}\ \ \text{ and }\ \ \nu(\phi)=\inf\left\{\widetilde{\nu}\left([\{\phi^t\}]\right)\ |\ \phi^1=\phi\right\}.\]
\noindent

Noticing that the functional $\mathcal{L}:=\max\{\mathcal{L}_+,\mathcal{L}_-\}$ is a genuine length functional and using again the terminology of McDuff \cite{geometricvariants} we give the following definition of a geodesic for this second type of norms.\\

\begin{definition}
We say that a smooth path of compactly supported contactomorphisms $\{\phi^t\}$ starting at the identity is a geodesic 
\begin{enumerate}
    \item for $\widetilde{\nu}$ if $\widetilde{\nu}\left([\{\phi^t\}]\right)=\mathcal{L}\left(\{\phi^t\}\right)$
    \item for $\nu$ if $\nu(\phi^1)=\mathcal{L}\left(\{\phi^t\}\right)$.\\
\end{enumerate}
\end{definition}

The oscillation norms $\nu_{osc}$, $\widetilde{\nu_{osc}}$ and the FPR norms $\nu^\alpha_{FPR}$, $\widetilde{\nu_{FPR}^\alpha}$ are defined via two seminorms. While the seminorms used to define the oscillation norm come from some semilength functionals, the seminorms of the FPR norms come from some functionals that are not invariant by time reparametrization. Therefore we are not going to talk about geodesics for the FPR norm.\\

\begin{remarque}
For all $[\{\phi^t\}]\in\widetilde{\Cont_0^c}(M,\xi)$ note that  $\widetilde{\nu}\left([\{\phi^t\}]\right)\leq\inf\left\{\mathcal{L}\left(\{\varphi^t\}\right)\ |\ [\{\varphi^t\}]=[\{\phi^t\}]\right\}$. This inequality has no reason \textit{a priori} to be an equality.

\end{remarque}

\section{The different norms}\label{section 2}
In this section we recall the definition of the discrimant norm \cite{discriminante}, the oscillation norm \cite{discriminante}, the Shelukhin norm \cite{shelukhin} and the FPR norm  \cite{FPR}.

\subsection{The discriminant norm}\label{sous section discriminant}

Let $\phi$ be a contactomorphism of $(M,\xi=\Ker(\alpha))$. A point $x$ in $M$ is said to be a discriminant point of $\phi$ if $\phi(x)=x$ and $(\phi^*\alpha)_x= \alpha_x$. This definition does not depend on the choice of the contact form. Moreover a point $x\in M$ is a discriminant point of a contactomorphism $\phi\in\Cont(M,\xi)$ if and only if:
\begin{enumerate}
    \item $x$ is a discriminant point of $\phi^{-1}$
    \item $\varphi(x)$ is a discriminant point of $\varphi\circ\phi\circ\varphi^{-1}$ for all $\varphi\in\Cont(M,\xi)$.\\
\end{enumerate}

\begin{remarque}
Another way to define the notion of discriminant point is to consider the symplectization of the contact manifold $(M,\xi)$. Let us denote by $\mathcal{S}_\xi(M):=\underset{x\in M}\bigcup \left\{\alpha\in T_x^*M,\ \Ker(\alpha)=\xi\right\}\subset T^*M$ the symplectization of $(M,\xi)$. It is a $(\mathbb{R}^*,\cdot)$-principal fiber bundle on $M$; the action of $\theta\in\mathbb{R}^*$ on any element $(x,\mu)\in\mathcal{S}_\xi(M)$ is given by $\theta\cdot(x,\mu)=(x,\theta\mu)$. Recall that $(\mathcal{S}_\xi(M),d\lambda_\xi)$ is a symplectic manifold, where $\lambda_\xi$ is the restriction of the canonical Liouville form $\lambda_M$ of $T^*M$ to $\mathcal{S}_\xi(M)$, i.e.\ $\lambda_M(q,p)(u)=p(d_{(q,p)}\pi(u))$ for all $(q,p)\in T^*M$, $u\in T_{(q,p)}T^*X$ and where $\pi : T^*X\to X$ is the canonical projection. To any contactomorphism $\phi\in\Cont_0(M,\xi)$ one can associate its $\mathbb{R}^*$-lift: the symplectomorphism $\psi$ of $(\mathcal{S}_\xi(M),d\lambda_\xi)$ defined by 
\[\psi(x,\mu)=(\phi(x),\mu\circ d_{\phi(x)}\phi^{-1})\ ,\ \forall (x,\mu)\in \mathcal{S}_\xi(M)\ .\]
So $x\in M$ is a discriminant point of a contactomorphism $\phi\in\Cont(M,\xi)$ if and only if any point $(x,\mu)\in \mathcal{S}_\xi(M)$ is a fixed point of its $\mathbb{R}^*$-equivariant lift $\psi$.\\
\end{remarque}

\noindent
For any contactomorphism $\phi\in\Cont_0^c(M,\xi)$ we denote by $DP(\phi)$ the set of all discriminant points of $\phi$. We say that a compactly supported smooth path of contactomorphisms $\{\phi^t\}_{t\in[a,b]}$ does not have discriminant points if
\begin{enumerate}
    \item $DP((\phi^s)^{-1}\circ\phi^t)=\emptyset$ for all $s\ne t\in[a,b]$, in the when case $M$ is compact
    \item $DP((\phi^s)^{-1}\circ\phi^t)\cap\Interior\left(\Supp(\{\phi^t\})\right)=\emptyset$ for all $s\ne t\in[a,b]$, in the case when $M$ is not compact.
\end{enumerate}

\noindent
The discriminant length of a smooth path of contactomorphisms will count in how many pieces we have to cut this path so that each piece does not have discriminant points. More precisely, the discriminant length of a non-constant smooth path of contactomorphisms $\{\phi^t\}$ of $(M,\xi)$ is defined by:
\[\begin{aligned}
\mathcal{L}_d(\{\phi^t\}):=\inf\{N\in\mathbb{N}^*\ |&\ \text{there exists } 0=t_0<...<t_N=1, \text{ such that for all } i\in [0,N-1]\cap\mathbb{N}\\
&\ \{\phi^t\}_{t\in[t_i,t_{i+1}]}\ \text{does not have discriminant point} \}\in\mathbb{N}_{>0}\cup\{+\infty\}.
\end{aligned}\]
By convention we set $\inf\emptyset=+\infty$.\\

\noindent
Because of the two properties of discriminant points mentioned at the beginning of this subsection, it is straightforward that the functional $\mathcal{L}_d$ is a length functional which is invariant under conjugation of elements in $\Cont_0^c(M,\xi)$. The discriminant norms $\widetilde{\nu_d}$ on $\widetilde{\Cont_0^c}(M,\xi)$ and $\nu_d$ on $\Cont_0^c(M,\xi)$ are then defined as follow :
\[\begin{aligned}
\widetilde{\nu_d}\left([\{\phi^t\}]\right)&=\min\left\{\mathcal{L}_d(\{\varphi^t\}) \ |\ [\{\varphi^t\}]=[\{\phi^t\}]\right\},\ \forall [\{\phi^t\}]\in\widetilde{\Cont_0^c}(M,\xi)\setminus\{\Id\} \text{ and } \widetilde{\nu_d}(\Id)=0\\
\nu_d(\phi)&=\min\left\{\mathcal{L}_d(\{\phi^t\})\ |\ \phi^0=\Id \text{ and } \phi^1=\phi \right\}, \forall \phi\in\Cont_0^c(M,\xi)\setminus\{\Id\}\text{ and } \nu_d(\Id)=0.
\end{aligned}\]
 \textit{A priori} the applications $\nu_d$ and $\widetilde{\nu_d}$ may take the value $+\infty$, but this is not the case. For a proof of this statement we refer the reader to \cite{discriminante}. So for any co-oriented contact manifold $(M,\xi)$ the applications $\nu_d$ and $\widetilde{\nu_d}$
are well defined conjugation invariant norms.\\ 

\noindent
Another way to define the norms $\widetilde{\nu_d}$ and $\nu_d$ is to see them as word norms where the generating sets are respectively 
\[\begin{aligned}\widetilde{\mathcal{E}}&:=\left\{[\{\phi^t\}]\in\widetilde{\Cont_0^c}(M,\xi)\ |\ \{\phi^t\} \text{ does not have discriminant points}\right\} \text{ and }\\
 \mathcal{E}&:=\Pi(\widetilde{\mathcal{E}}) 
\end{aligned}\]
where $\Pi :\widetilde{\Cont_0^c}(M,\xi)\to\Cont_0^c(M,\xi)$ is the natural projection.

\subsection{The oscillation norm}
 Following Eliashberg and Polterovich \cite{EP00}, we say that a smooth path $\{\phi^t\}_{t\in[a,b]}\subset\Cont^c_0(M,\xi)$ is positive (resp.\ negative) if \begin{enumerate}
     \item $\alpha_{\phi^t(x)}\left(\frac{d}{dt}\phi^t(x)\right)> 0\ (\text{resp.\ } <0),\ \text{ for all } t\in[a,b],\text{ and for all } x\in M$, when $M$ is compact
     \item $\alpha_{\phi^t(x)}\left(\frac{d}{dt}\phi^t(x)\right)> 0\ (\text{resp.\ } <0),\ \text{ for all } t\in[a,b],\text{ and for all } x\in \Interior\left(\Supp\left(\{\phi^t\}\right)\right)$, when $M$ is non compact.
 \end{enumerate}
 
 \noindent
 Positivity or negativity of a path does not depend on the choice of the contact form but only on the choice of the co-orientation of $\xi$. One can also define the notion of non-negative (resp.\ non-positive) smooth path, by replacing the above strict inequalities with large inequalities. \\
 
If there exists a non-constant and non-negative contractible loop of compactly supported contactmorphisms then the contact manifold $(M,\xi)$ is called non universally orderable, in the other case we say that $(M,\xi)$ is universally orderable. When $(M,\xi)$ is universally orderable, Eliashberg and Polterovich in \cite{EP00} defined a bi-invariant partial order on $\widetilde{\Cont_0^c}(M,\xi)$. For more details we refer the reader to \cite{EP00}, \cite{chernovnemirovski2} and to the remark \ref{compatibilite avec la relation d'ordre} below.\\

\begin{remarque}
To prove that a contact manifold is orderable involves some "hard" symplectic/contact techniques. See for instance in \cite{chernovnemirovski1}, \cite{chernovnemirovski2},  \cite{CFP}, \cite{EKP}, \cite{EP00} pseudo-holomorphic curves or generating functions techniques are used to prove the universal orderability of the unitary cotangent bundle and the $1$-jet bundle of any compact manifold. We will see below (Remark \ref{remarque sur ordonnabilite}) how Bhupal \cite{bhupal} and Sandon \cite{sandonthese} show that $(\mathbb{R}^{2n+1},\xi_{st})$ and $(\mathbb{R}^{2n}\times S^1,\xi_{st})$ are universally orderable using also generating functions techniques.
\end{remarque}

\begin{definition} Let $\{\phi^t\}$ be a non-constant smooth path in $\Cont_0^c(M,\Ker(\alpha))$. We define the  semilength functionals used to construct the oscillation norm by
\[\begin{aligned}
\mathcal{L}_+^{osc}(\{\phi^t\}):=\inf\{ N\in\mathbb{N}^*\ |&\ \text{ there exist } K\in\mathbb{N}^*, K\geq N,\ \text{ and }\  0=t_0<...<t_K=1 \text{ so that}\\
&\ \text{ Card}\{i\in[0,K-1]\cap\mathbb{N}\ |\ \{\phi^t\}_{t\in]t_i,t_{i+1}[}\ \text{ is positive}\}=N\\
&\ \text{ Card}\{i\in[0,K-1]\cap\mathbb{N}\ |\ \{\phi^t\}_{t\in]t_i,t_{i+1}[}\ \text{ is negative}\}=K-N\\
&\ \ \mathcal{L}_d\left(\{\phi^t\}_{t\in[t_i,t_{i+1}]}\right)=1,\ \text{ for all } i\in[0,K-1]\cap\mathbb{N}\}
\end{aligned}\]

\[\begin{aligned}
\mathcal{L}_-^{osc}(\{\phi^t\}):=\inf\{ N\in\mathbb{N}^*\ |&\ \text{ there exist } K\in\mathbb{N}^*,\ K\geq N,\ \text{ and }\ 0=t_0<...<t_K=1 \text{ so that}\\
&\ \text{ Card}\{i\in[0,K-1]\cap\mathbb{N}\ |\ \{\phi^t\}_{t\in]t_i,t_{i+1}[}\ \text{ is positive}\}=K-N\\
&\ \text{ Card}\{i\in[0,K-1]\cap\mathbb{N}\ |\ \{\phi^t\}_{t\in]t_i,t_{i+1}[}\ \text{ is negative}\}=N\\
&\ \ \mathcal{L}_d\left(\{\phi^t\}_{t\in[t_i,t_{i+1}]}\right)=1,\ \text{ for all } i\in[0,K-1]\cap\mathbb{N}\}.
\end{aligned}\]
By convention we set $\inf\emptyset=+\infty$. \\
\end{definition}

\noindent
The oscillation length of a smooth path $\{\phi^t\}$ is then by definition \[\mathcal{L}_{osc}(\{\phi^t\})=\max\{\mathcal{L}_+^{osc}(\{\phi^t\}),\mathcal{L}_-^{osc}(\{\phi^t\})\}\] and the corresponding seminorms are
\[\widetilde{\nu_\pm^{osc}}\left([\{\phi^t\}]\right):=\pm\inf\left\{\mathcal{L}_\pm^{osc}\left(\{\varphi^t\}\right)\ |\ [\{\varphi^t\}]=[\{\phi^t\}]\right\} \text{ for all } [\{\phi^t\}]\in\widetilde{\Cont_0}(M,\xi).\]
As discussed in the previous section, to these seminorms we associate the application $\widetilde{\nu_{\text{osc}}}:=\max\left\{\widetilde{\nu_+},-\widetilde{\nu_-}\right\}$ on $\widetilde{\Cont_0^c}(M,\xi)$ and the application $\nu_{\text{osc}}$ on $\Cont_0^c(M,\xi)$ defined by
\[\nu_{\text{osc}}(\phi)=\inf\{\widetilde{\nu_{\text{osc}}}\left([\{\phi^t\}])\ | \phi^1=\phi\right\}\ \text{ for all }\phi\in\Cont_0^c(M,\xi).\]

Colin and Sandon in \cite{discriminante} showed that the applications $\nu_{\text{osc}}$ and $\widetilde{\nu_{\text{osc}}}$ are well defined conjugation invariant norms on $\Cont_0^c(M,\xi)$ and on $\widetilde{\Cont_0^c}(M,\xi)$) respectively if and only if $(M,\xi)$ is universally orderable.\\
\noindent

\begin{remarque}\label{compatibilite avec la relation d'ordre}
An interesting property of the norm $\widetilde{\nu_{osc}}$ is the compatibility with the bi-invariant partial order $\succeq$ defined on $\widetilde{\Cont_0^c}(M,\xi)$ by Eliashberg and Polterovich in \cite{EP00} when $(M,\xi)$ is an universally orderable contact manifold.  More precisely for any $[\{\varphi^t\}]\succeq[\{\phi^t\}]\succeq \Id$ we have $\widetilde{\nu_{osc}}\left([\{\varphi^t\}]\right)\geq\widetilde{\nu_{osc}}\left([\{\phi^t\}]\right)$.
\end{remarque}

\subsection{The Shelukhin norm}

The Shelukhin length of a smooth path of compactly supported contactomorphisms $\{\phi^t\}_{t\in[0,1]}$ in $\Cont_0^c(M,\xi=\Ker(\alpha))$ is defined by 
\[\mathcal{L}^\alpha_S\left(\{\phi^t\}\right)=\int_0^1\underset{x\in M}\max \left|\alpha_{\phi^t(x)}\left(\frac{d}{dt}\phi^t(x)\right)\right|dt.\]

\noindent
To these length functionals we associate the applications
\[\begin{aligned}
\widetilde{\nu^\alpha_S}\left([\{\phi^t\}]\right)&=\inf\left\{\mathcal{L}^\alpha_S(\{\varphi^t\}) \ |\ [\{\varphi^t\}]=[\{\phi^t\}]\right\}\ \text{ and}\\
\nu^\alpha_S(\phi)&=\inf\left\{\mathcal{L}^\alpha_S(\{\phi^t\})\ |\ \phi^0=\Id \text{ and } \phi^1=\phi \right\}.
\end{aligned}\]

The application $\widetilde{\nu_S^\alpha}$ is a pseudo-norm, and Shelukhin proved in \cite{shelukhin} that the application $\nu_S^\alpha$ is a norm. None of them is conjugation invariant, indeed for all $\varphi\in\Cont_0^c(M,\xi)$ and for all $[\{\phi^t\}]\in\widetilde{\Cont_0^c}(M,\xi)$ we have
\[\widetilde{\nu_S^\alpha}\left([\{\varphi\circ\phi^t\circ\varphi^{-1}]\right)=\widetilde{\nu_S^{\varphi^*\alpha}}\left([\{\phi^t\}]\right)\ \ \text{ and }\ \ \nu_S^\alpha\left(\varphi\circ\phi^1\circ\varphi^{-1}\right)=\nu_S^{\varphi^*\alpha}\left(\phi^1\right).\]

\vspace{2.ex}

\noindent
The fact that $\nu_S^\alpha$ is non-degenerate is non-trivial. It is proved by Shelukhin in \cite{shelukhin} using an energy-capacity inequality: if $\phi\in\Cont_0^c(M,\xi)$ is not the identity then it displaces a ball, and its norm is greater than the capacity of this ball. Since the same argument cannot be applied to loops of contactomorphisms based at the identity, it may exist $[\{\phi^t\}]\in \pi_1\left(\Cont_0^c(M,\xi)\right)\setminus\{\Id\}$ such that $\widetilde{\nu_S^\alpha}\left([\{\phi^t\}]\right)=0$. For more details we refer the reader to \cite{shelukhin}.

\subsection{The FPR norm} 
As the oscillation norm, the FPR norm \cite{FPR} comes from two seminorms $\widetilde{\nu^{\alpha}_\pm}$ and is well defined for a contact manifold $(M,\xi=\Ker(\alpha))$ that is universally orderable. The seminorms are defined for any $[\{\phi^t\}]\in\widetilde{\Cont_0^c}(M,\xi)$ by
\[\begin{aligned}
\widetilde{\nu^\alpha_+}([\{\phi^t\}])&=\min\left\{\left\lceil \underset{t,x}\max\ \alpha_{\varphi^{t}(x)}\left(\frac{d}{dt}\varphi^{t}(x)\right)\right\rceil\ |\  [\{\varphi^t\}]=[\{\phi^t\}]\in\widetilde{\Cont_0^c}(M,\xi)\right\}\text{ and }\\
\widetilde{\nu^\alpha_-}([\{\phi^t\}])&=-\min\left\{\left\lceil -\underset{t,x}\min\ \alpha_{\varphi^{t}(x)}\left(\frac{d}{dt}\varphi^{t}(x)\right)\right\rceil\ |\   [\{\varphi^t\}]=[\{\phi^t\}]\in\widetilde{\Cont_0^c}(M,\xi)\right\} .\end{aligned}\] 

The applications $\widetilde{\nu^\alpha_{\text{FPR}}}=\max\left\{\widetilde{\nu_+^\alpha},-\widetilde{\nu_-^\alpha}\right\}$ and $\nu^\alpha_{\text{FPR}} : \phi\mapsto\inf\left\{\widetilde{\nu^\alpha_{\text{FPR}}}\left([\{\phi^t\}]\right)\ |\ \phi^1=\phi\right\}$ are well defined norms on $\widetilde{\Cont_0^c}(M,\xi)$ and on $\Cont_0^c(M,\xi)$ respectively.\\

Moreover Fraser, Polterovich and Rosen showed in \cite{FPR} that if the Reeb flow associated to the contact form $\alpha$ is periodic then $\widetilde{\nu^\alpha_{FPR}}$ and $\nu^\alpha_{FPR}$ are conjugation invariant norms. This comes from the fact that the fundamental group of $\Cont_0^c(M,\xi)$ is included in the center of $\widetilde{\Cont}_0^c(M,\xi)$ (they are actually equal).\\

\begin{remarque}
  The norm $\widetilde{\nu_{FPR}^\alpha}$ on $\widetilde{\Cont_0^c}(M,\xi)$ is also compatible with the partial order: $\widetilde{\nu_{FPR}^\alpha}([\{\phi^t\}])\geq \widetilde{\nu_{FPR}^\alpha}([\{\varphi^t\}])$ for any $[\{\phi^t\}]\succeq[\{\varphi^t\}]\succeq\Id$. \\
\end{remarque}

\noindent
Because the functionals
\[\begin{aligned}
\mathcal{C}^\infty([0,1],\Cont_0^c(M,\xi))&\to\mathbb{R}\\
\{\phi^t\}&\mapsto \left\lceil \underset{t,x}\max\ \alpha_{\phi^{t}(x)}\left(\frac{d}{dt}\phi^{t}(x)\right)\right\rceil
\end{aligned}\]
\[\begin{aligned}
\mathcal{C}^\infty([0,1],\Cont_0^c(M,\xi))&\to\mathbb{R}\\
\{\phi^t\}&\mapsto \left\lceil \underset{t,x}-\min\ \alpha_{\phi^{t}(x)}\left(\frac{d}{dt}\phi^{t}(x)\right)\right\rceil
\end{aligned}\]
are not invariant under time reparametrization we cannot talk about semilength functionals, and so we are not going to talk about the geodesics of the FPR norm.

\section{Generating functions and translation selector}\label{section 3}

An important ingredient to prove Theorems \ref{geodesique shelukhin/fpr} and \ref{geodesique discriminante/oscillation} is to be able to detect translated points of contactomorphisms and their translations. To do so and to avoid confusion we will sometimes see contactomorphisms of $(\mathbb{R}^{2n}\times S^1,\xi_{st})$ as $1$-periodic contactomorphisms of $(\mathbb{R}^{2n+1},\xi_{st})$. More precisely, let $\Cont_0^c(\mathbb{R}^{2n+1},\xi_{st})^{1-per}$ be the group of contactomorphisms $\phi$ of $(\mathbb{R}^{2n+1},\xi_{st})$ that can be joined to the identity by a smooth path $\{\phi^t\}$ such that 
\begin{enumerate}
    \item $\Supp(\{\phi^t\})$ is contained in $K\times\mathbb{R}$, where $K$ is a compact subset of $\mathbb{R}^{2n}$
    \item $\phi^t(x,y,z+k)=\phi^t(x,y,z)+(0,0,k)$ for any $t\in[0,1]$ and for any $k\in\mathbb{Z}$. 
    \end{enumerate}
    The natural projection $\Cont_0^c(\mathbb{R}^{2n+1},\xi_{st})^{1-per}\to\Cont_0^c(\mathbb{R}^{2n}\times S^1,\xi_{st})$ is a group isomorphism whose inverse is given by lifting. \\

\begin{definition}[\cite{sandonthese}]
Let $\phi\in\Cont_0^c(\mathbb{R}^{2n}\times S^1,\xi_{st})$ be a contactomorphism and denote by $\widetilde{\phi}\in \Cont_0^c(\mathbb{R}^{2n+1},\xi_{st})^{1-per}$ its lift. A point $(p,[z])\in\mathbb{R}^{2n}\times S^1$ is a $t$-translated point for $\phi$ if $\widetilde{\phi}(p,z)=(p,z+t) \text{ and } g(p,[z])=0$
where $(p,z)\in\mathbb{R}^{2n+1}$ is any point that projects on $(p,[z])$ and $g$ denotes the conformal factor of $\phi$ with respect to $\alpha_{st}=dz-\sum\limits_{i=1}^ny_idx_i$, i.e.\ $\phi^*\alpha_{st}=e^{g}\alpha_{st}$. The spectrum of $\phi$ is then defined by
\[\text{Spectrum}(\phi):=\left\{ t\in\mathbb{R}\ |\ \phi \text{ has a } t\text{-translated point}\right\}.\]
\end{definition}

\vspace{2.ex}

\begin{remarque}\label{remarque2}
\begin{enumerate}
    \item The definition of translated points depends on the choice of a contact form.
    \item Let $(p,[z])\in\mathbb{R}^{2n}\times S^1$ be a $k$-translated point of $\phi\in\Cont_0^c(\mathbb{R}^{2n}\times S^1,\xi_{st})$ with $k\in\mathbb{Z}$. Then $(p,[z])$ is a discriminant point of $\phi$. In particular, $\mathbb{Z}$-translated points do not depend on the choice of the contact form. In the same spirit we deduce that for all $\phi,\psi\in\Cont_0^c(\mathbb{R}^{2n}\times S^1,\xi_{st})$  \[\text{Spectrum}(\phi)\cap\mathbb{Z}=\text{Spectrum}(\psi\circ\phi\circ\psi^{-1})\cap\mathbb{Z}.\]
    \end{enumerate}
\end{remarque}

\vspace{2.ex}
\noindent
Sandon in \cite{sandonthese} constructed a function 
\[c :\Cont_0^c(\mathbb{R}^{2n}\times S^1,\xi_{st})\to\mathbb{R}_{\geq 0}\ \ \ \ \ \ \phi\mapsto c(\phi)\in\text{Spectrum}(\phi)\ \]
 that satisfies algebraic and topological properties that we list at the end of this section in Theorem \ref{translation selector} and that will be intensively used for the proofs of Theorem \ref{geodesique shelukhin/fpr} and Theorem \ref{geodesique discriminante/oscillation}. We call this function a translation selector. The rest of this section will be devoted to give the main steps of its construction that we will need for the last section concerning the proofs of Theorem \ref{geodesique shelukhin/fpr}, Theorem \ref{geodesique shelukhin/fpr} and Proposition \ref{inégalité géodésique shelukhin}. For this purpose we will follow mainly \cite{sandonthese}.

\subsection{The graph of a contactomorphism as a compact Legendrian of the 1-jet bundle}\label{section graphe}

The aim of this paragraph is to associate to any contactomorphism $\phi\in\Cont_0^c(\mathbb{R}^{2n}\times S^1,\xi_{st})$ a Legendrian $\Lambda_\phi$ of the $1$-jet bundle of $S^{2n}\times S^1$ in such a way that there is a one-to-one correspondence between translated points of $\phi$ and intersections of $\Lambda_\phi$ with the $0$-wall of the $1$-jet bundle. We give a rather explicit and detailed description of this construction since for proving the estimate of Proposition \ref{inégalité géodésique shelukhin} below it will be important to make sure that all the maps involved in the process of this construction preserve not only the contact structures but also the contact forms.\\

First recall that for a smooth manifold $X$ the $1$-jet bundle of $X$ is the manifold $J^1X:=T^*X\times\mathbb{R}$, where $T^*X$ is the cotangent bundle of $X$. This space carries a canonical contact structure $\xi^X$ given by the kernel of the $1$-form $\alpha_X:=dz-\lambda_X$ where $z$ is the coordinate function on $\mathbb{R}$ and $\lambda_X$ is the Liouville form of the cotangent bundle $T^*X$. We denote by $\mathbb{O}_X:=\{(q,0)\in T^*X\ |\ q\in X\}$ the $0$-section of the cotangent bundle $T^*X$ and refer to $\mathbb{O}_X\times\mathbb{R}$ as the $0$-wall of $J^1X$.\\

\noindent
Let $\phi\in\Cont_0^c(\mathbb{R}^{2n}\times S^1,\xi_{st})$ be a compactly supported contactomorphism and consider its $1$-periodic correspondent $\widetilde{\phi}\in\Cont_0^c(\mathbb{R}^{2n+1},\xi_{st})^{1-per}$. We write $g$ and $\widetilde{g}$ for their conformal factors with respect to $\alpha_{st}$, i.e.\ $\phi^*\alpha_{st}=e^g\alpha_{st}$ and $\widetilde{\phi}^*\alpha_{st}=e^{\widetilde{g}}\alpha_{st}$. Then the image of the map
\[\text{gr}_{\alpha_{st}}(\widetilde{\phi}) : \mathbb{R}^{2n+1}\to\mathbb{R}^{2n+1}\times\mathbb{R}^{2n+1}\times\mathbb{R}
\ \ \ \ \ \ \ \ p\mapsto (p,\widetilde{\phi}(p),\widetilde{g}(p))\]
is a Legendrian of $\mathbb{R}^{2n+1}\times\mathbb{R}^{2n+1}\times\mathbb{R}$
endowed with the contact $1$-form $\beta:=\alpha_{st}^2-e^\theta\alpha_{st}^1$, where $\theta$ is the coordinate function on $\mathbb{R}$ and where $\alpha_{st}^i$ for $i\in\{1,2\}$ is the pull back of $\alpha_{st}$ by the projection  $pr_i(p_1,p_2,\theta)=p_i$.\\

\noindent
The application 
\[\begin{aligned}
\Theta : \mathbb{R}^{2n+1}\times\mathbb{R}^{2n+1}\times\mathbb{R}&\hookrightarrow J^1(\mathbb{R}^{2n+1})\\
(x,y,z,X,Y,Z,\theta)&\mapsto (x,Y,z,Y-e^\theta y,x-X,e^\theta-1,xY-XY+Z-z)
\end{aligned}\]
is an exact contact embedding of $((\mathbb{R}^{2n+1})^2\times\mathbb{R},\Ker(\beta))$ in $(J^1(\mathbb{R}^{2n+1}),\Ker(\alpha_{\mathbb{R}^{2n+1}}))$, i.e.\ $\Theta^*(dz-\lambda_{\mathbb{R}^{2n+1}})=\beta$. Denote by $\Lambda_{\widetilde{\phi}}^{\mathbb{R}^{2n+1}}:=\Theta\circ\text{gr}_{\alpha_{st}}(\widetilde{\phi})$. Then the following lemma is a direct consequence of the definition of $\Theta$.\\

\begin{lemme}\label{lemme1}
\begin{enumerate}
    \item $\Lambda^{\mathbb{R}^{2n+1}}_{\Id}$ is the $0$-section of  $J^1(\mathbb{R}^{2n+1})$.
    \item If $\{\phi^t\}\subset\Cont_0^c(\mathbb{R}^{2n}\times S^1,\xi_{st})$ is a smooth path of contactmorphisms then $\left\{\text{Image}\left(\Lambda_{\widetilde{\phi^t}}^{\mathbb{R}^{2n+1}}\right)\right\}$ is a smooth path of Legendrians.
    \item For any $\phi\in\Cont_0^c(\mathbb{R}^{2n}\times S^1,\xi_{st})$, there is a one-to-one correspondence between the set of $t$-translated points of $\phi$ and $\Lambda_{\widetilde{\phi}}^{\mathbb{R}^{2n+1}}\left(\mathbb{R}^{2n+1}\right)\bigcap \mathbb{O}_{\mathbb{R}^{2n+1}}\times\{t\}$.\\
    \end{enumerate}
\end{lemme}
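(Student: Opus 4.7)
The plan is to verify each of the three statements by a direct computation with the explicit formula for $\Theta$ together with the definitions of $\text{gr}_{\alpha_{st}}$, $\widetilde{\phi}$, and its conformal factor $\widetilde{g}$.

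For (1), I would substitute $\widetilde{\Id}(x,y,z) = (x,y,z)$ and the identically zero conformal factor $\widetilde{g} \equiv 0$ into the definition of $\text{gr}_{\alpha_{st}}(\widetilde{\Id})$, obtaining the parameterisation $(x,y,z,x,y,z,0)$. Feeding this into the formula for $\Theta$ gives $\Lambda_{\Id}^{\mathbb{R}^{2n+1}}(x,y,z) = (x,y,z,0,0,0,0)$, since every other slot collapses ($y-y$, $x-x$, $e^0-1$, $xy-xy+z-z$). This is precisely a parameterisation of the $0$-section of $J^1(\mathbb{R}^{2n+1})$.

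For (2), smoothness of the family is immediate: the map $\text{gr}_{\alpha_{st}}(\widetilde{\phi^t})\colon p \mapsto (p,\widetilde{\phi^t}(p),\widetilde{g^t}(p))$ is a graph, hence a smooth embedding, and varies smoothly in $t$; composing with the fixed smooth embedding $\Theta$ preserves both properties. The Legendrian property then reduces to checking that the pullback of the contact form vanishes. Since $\Theta$ is an exact contact embedding with $\Theta^*\alpha_{\mathbb{R}^{2n+1}} = \beta$, it suffices to show that $\text{gr}_{\alpha_{st}}(\widetilde{\phi^t})^*\beta = 0$. Using the definitions of $\alpha_{st}^1$, $\alpha_{st}^2$ and the conformal identity $(\widetilde{\phi^t})^*\alpha_{st} = e^{\widetilde{g^t}}\alpha_{st}$, one computes directly that this pullback equals $e^{\widetilde{g^t}}\alpha_{st} - e^{\widetilde{g^t}}\alpha_{st} = 0$.

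For (3), I would fix a point $(p_x,p_y,z) \in \mathbb{R}^{2n+1}$, write $\widetilde{\phi}(p_x,p_y,z) = (X,Y,Z)$ and $\theta := \widetilde{g}(p_x,p_y,z)$, and read off
\[
\Lambda_{\widetilde{\phi}}^{\mathbb{R}^{2n+1}}(p_x,p_y,z) = \bigl(p_x,\,Y,\,z,\;Y - e^\theta p_y,\;p_x - X,\;e^\theta - 1,\;p_x Y - XY + Z - z\bigr).
\]
Membership in $\mathbb{O}_{\mathbb{R}^{2n+1}} \times \{t\}$ is equivalent to the three momentum slots vanishing and the jet coordinate equalling $t$. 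The third momentum condition forces $\theta = 0$ (so $\widetilde{g}=0$ at the point), after which the first two yield $X = p_x$ and $Y = p_y$, and the jet condition collapses to $Z - z = t$, that is $\widetilde{\phi}(p,z) = (p, z+t)$. These are exactly the defining conditions of a $t$-translated point. No real obstacle is expected; the only subtlety worth flagging is that $\widetilde{\phi}$ and $\widetilde{g}$ are $1$-periodic in $z$, so each translated point in $\mathbb{R}^{2n}\times S^1$ has a $\mathbb{Z}$-family of lifts in $\mathbb{R}^{2n+1}$, and the claimed ``one-to-one correspondence'' is naturally read after this $\mathbb{Z}$-action is accounted for, as happens in the subsequent descent from $\mathbb{R}^{2n+1}$ to $S^{2n}\times S^1$ announced in the title of Section \ref{section graphe}.
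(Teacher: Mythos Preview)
Your proposal is correct and follows exactly the approach the paper has in mind: the paper does not spell out a proof but simply states that the lemma ``is a direct consequence of the definition of $\Theta$'', and your three computations are precisely that verification. Your added remark on the $\mathbb{Z}$-ambiguity in part~(3) is a fair clarification of what the one-to-one correspondence means at the $\mathbb{R}^{2n+1}$ level before passing to the quotient.
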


\noindent
The aim now is to be able to replace the Euclidean space $\mathbb{R}^{2n+1}$ of the previous lemma with a compact manifold without loosing the 3 properties above. This will be done in two steps. First using the periodicity we will replace $\mathbb{R}^{2n+1}$ by $\mathbb{R}^{2n}\times S^1$. Then we will compactify the $\mathbb{R}^{2n}$ coordinates into the standard Euclidean sphere $S^{2n}\subset \mathbb{R}^{2n+1}$. \\

\noindent
Because $\phi$ is $1$-periodic in the
$z$-direction, the map 
\[\Lambda_\phi^{\mathbb{R}^{2n}\times S^1} : \mathbb{R}^{2n}\times S^1\to J^1(\mathbb{R}^{2n}\times S^1)\ \ \ \ \ \ (p,[z])\mapsto \text{pr}\left(\Lambda_{\widetilde{\phi}}^{\mathbb{R}^{2n+1}}(p,z)\right) ,\]
where $z\in\mathbb{R}$ is any representant of $[z]\in S^1=\mathbb{R}/\mathbb{Z}$ and $\text{pr} :J^1(\mathbb{R}^{2n+1})\to J^1(\mathbb{R}^{2n}\times S^1)$ the natural projection, is well defined. Note that the projection $\text{pr}$ is a covering map that preserves the contact forms, i.e.\ $\text{pr}^*(\alpha_{\mathbb{R}^{2n}\times S^1})=\alpha_{\mathbb{R}^{2n+1}}$, and that the map $\Lambda_\phi^{\mathbb{R}^{2n}\times S^1}$ enjoys again the 3 properties of Lemma \ref{lemme1}. \\

\noindent
Finally, fixing $p_0$ a point on $S^{2n}$ the stereographic projection $\psi : S^{2n}\setminus\{p_0\}\to\mathbb{R}^{2n}$ gives a diffeomorphism $\overline{\psi}:=\psi\times\Id : S^{2n}\setminus\{p_0\}\times S^1\to\mathbb{R}^{2n}\times S^1$ that lifts to the strict contactomorphism 
\[\begin{aligned}
\Psi :J^1(S^{2n}\setminus\{p_0\}\times S^1)&\to J^1(\mathbb{R}^{2n}\times S^1)\\
(x,\mu,z)&\mapsto (\overline{\psi}(x),\mu\circ d_{\overline{\psi}(x)}\overline{\psi}^{-1},z).
\end{aligned}\]
Since $\phi$ is compactly supported in the $\mathbb{R}^{2n}$-direction, the map

\[\Lambda_\phi : S^{2n}\times S^1\ \ \ \ \ (p,z)\mapsto \left\{
    \begin{array}{ll}
        \Psi^{-1}\left(\Lambda_\phi^{\mathbb{R}^{2n}\times S^1}(\overline{\psi}(p,z))\right) & \mbox{if } p\ne p_0 \\
        ((p,z),0,0) & \mbox{ if } p=p_0
    \end{array}
    \right.\]
is a smooth Legendrian embedding that still enjoys the three properties of Lemma \ref{lemme1}. Moreover now this Legendrian is compact.\\

In the case when $\phi$ is $\mathcal{C}^1$-small, $\Lambda_\phi$ is a Legendrian section of $J^1(S^{2n}\times S^1)$ and so it is given by the $1$-jet of a function $f:S^{2n}\times S^1\to\mathbb{R}$, i.e.\ $\Lambda_\phi(x)=(x,d_xf,f(x))$ for all $x\in S^{2n}\times S^1$. In particular there is a one-to-one correspondence between critical points of $f$ of critical value $t$ and $t$-translated points of $\phi$. So when $\phi\in\Cont_0^c(\mathbb{R}^{2n+1},\xi_{st})^{z-per}$ is $\mathcal{C}^1$-small, looking for translated points of $\phi$ is equivalent to looking for critical point of $f$. For the latter problem, Morse theory can be applied to ensure existence of critical points. \\

For a general contactomorphism $\phi\in\Cont_0^c(\mathbb{R}^{2n}\times S^1,\xi_{st})$ the map $\Lambda_\phi$ does not need to be a section anymore, however it is smoothly isotopic to the $0$-section of $J^1(S^{2n}\times S^1)$ through a path of Legendrians thanks to Lemma \ref{lemme1}. We will describe in the next paragraph how one can associate to such a Legendrian a function $f: S^{2n}\times S^1\times\mathbb{R}^N\to\mathbb{R}$, for some $N\in\mathbb{N}$, such that we have again a one-to-one correspondence between critical points of $f$ of critical value $t$ and $t$-translated points of $\phi$. Moreover a control of the behaviour of $f$ at infinity will allow again to ensure existence of critical points of such a function $f$ and so the existence of translated points of $\phi$. 

\subsection{Generating functions}\label{generating function}

Let $X$ be a smooth manifold. For any integer $N\in\mathbb{N}$, a function
\[f: X\times\mathbb{R}^N\to\mathbb{R}\ \ \ \ \ \  (x,v)\mapsto f(x,v)\]
is said to be a generating function if $0$ is a regular value of
\[\frac{\partial f}{\partial v} : X\times\mathbb{R}^N\to(\mathbb{R}^N)^*\]
where $(\mathbb{R}^N)^*$ is the set of linear forms on $\mathbb{R}^N$, and where $\frac{\partial f}{\partial v}$ is the derivative of $f$ in the $\mathbb{R}^N$ direction. It follows from the definition that $\Sigma_f:=\left(\frac{\partial f}{\partial v}\right)^{-1}\{0\}$ is a smooth submanifold of $X\times\mathbb{R}^N$ of the same dimension of $X$ whenever $f : X\times\mathbb{R}^N\to\mathbb{R}$ is a generating function. In this case, denoting by $\frac{\partial f}{\partial x}$ the derivative of $f$ in the $X$ direction, the map \[\begin{aligned}
j^1_f :\Sigma_f&\to J^1X\\
(x,v)&\mapsto \left(x,\frac{\partial f}{\partial x}(x,v),f(x,v)\right)
\end{aligned}\]
is a Legendrian immersion. We say that the immersed Legendrian $\Lambda_f:=j^1_f(\Sigma_f)$ is generated by $f$. For all $a\in\mathbb{R}$ there is a one-to-one correspondence between critical points of $f$ with critical value $a$ and intersections of the immersed Legendrian $\Lambda_f$ with $\mathbb{O}_X\times\{a\}$.\\

\noindent
When $X$ is a compact manifold, a sufficient condition to guarantee the existence of critical points of a smooth function $f :X\times\mathbb{R}^N\to\mathbb{R}$  is to ask that the function $f$ is quadratic at infinity, i.e.\ there exists a non-degenerate quadratic form $Q :\mathbb{R}^N\to\mathbb{R}$ and a compactly supported function $g:X\times\mathbb{R}^N\to\mathbb{R}$ such that $f(x,v)=g(x,v)+Q(v), \text{ for all } (x,v)\in X\times\mathbb{R}^N$. The fundamental result about existence of generating functions is the following.\\

\begin{theoreme}[Chaperon \cite{Chaperon}, Chekanov \cite{chekanov}]\label{existence}
Let $X$ be a compact manifold and $\{\Lambda^t\}_{t\in[0,1]}$ a smooth path of Legendrians in $(J^1X,\xi^X)$ such that $\Lambda^0=\mathbb{O}_X\times\{0\}$. Then there exists an integer $N\in\mathbb{N}$ and a continuous path $\left\{f^t :X\times\mathbb{R}^N\to\mathbb{R}\right\}$ of generating functions quadratic at infinity such that $f^t$ generates the Legendrian $\Lambda^t$ for all $t\in[0,1]$. \\
\end{theoreme}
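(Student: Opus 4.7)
The plan is to follow Chaperon's method of broken trajectories. The strategy has three main ingredients: reduce to an ambient contact isotopy, handle the $C^1$-small base case, and combine short pieces through a broken-trajectory construction involving auxiliary fiber variables. I would first invoke the Legendrian isotopy extension theorem to promote $\{\Lambda^t\}$ to a smooth ambient contact isotopy $\{\Phi^t\}$ of $(J^1X, \xi^X)$ with $\Phi^0 = \Id$ and $\Phi^t(\mathbb{O}_X \times \{0\}) = \Lambda^t$. Compactness of $X$ together with smoothness in $t$ ensure that $\Phi^t$ can be chosen with support in a fixed compact subset of $J^1X$.

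The base case is that of a contactomorphism $\Phi$ that is $C^1$-close to the identity on a neighborhood of $\mathbb{O}_X \times \{0\}$. In that situation $\Phi(\mathbb{O}_X \times \{0\})$ remains a section of the bundle projection $J^1X \to X$, and hence equals the $1$-jet of a uniquely determined smooth function $F : X \to \mathbb{R}$. This $F$ is a trivial generating function quadratic at infinity with $N = 0$, and it depends smoothly on any parameter of $\Phi$. This base case is the origin of the whole construction.

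For a general isotopy, I would subdivide $[0,1]$ into $0 = s_0 < \cdots < s_k = 1$ fine enough that each incremental contactomorphism $\Phi^{s_{i+1}} \circ (\Phi^{s_i})^{-1}$ falls into the range of the base case, producing local generating functions $F_i$. I would then assemble a candidate GFQI for $\Lambda^t$ on $X \times (T^*X)^{k-1}$ by introducing intermediate fiber variables $(q_j, p_j) \in T^*X$ for $j = 1, \ldots, k-1$, the function being a sum of the local contributions $F_i$ evaluated on consecutive intermediate points plus pairing terms of the type $\langle q_j, p_j \rangle$, which play the role of the generating function of the identity contactomorphism and are genuine non-degenerate quadratic forms in the fiber direction. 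The fiber critical locus of such a function parametrizes broken Legendrian trajectories ending on $\Lambda^t$, giving the required bijection between critical points of $f^t$ and points of $\Lambda^t$ with matching critical values, while the pairing terms guarantee quadraticity at infinity once the local $F_i$ are arranged to be bounded perturbations.

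The main obstacle is continuity in $t$ across subdivision points: when $t$ moves past some $s_i$, the natural subdivision must be refined and the dimension of the auxiliary fiber jumps. This is handled by stabilization: adding a non-degenerate quadratic form in new auxiliary variables does not change the generated Legendrian, so any two GFQIs for the same Legendrian on auxiliary spaces of different dimensions can be placed on a common $X \times \mathbb{R}^N$. Combined with the Chekanov--Sikorav uniqueness result for generating functions up to fiber-preserving diffeomorphism and stabilization, this patches the locally continuous families into a continuous family $\{f^t\}_{t \in [0,1]}$ on a single $X \times \mathbb{R}^N$ with $N$ sufficiently large, as required by the statement.
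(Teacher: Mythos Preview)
The paper does not give its own proof of this theorem: it is stated with attribution to Chaperon and Chekanov and then used as a black box. So there is no ``paper's proof'' to compare against; the relevant benchmark is the original Chaperon--Chekanov argument.

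Your sketch follows that argument faithfully at the level of strategy: extend the Legendrian isotopy to an ambient contact isotopy, handle the $C^1$-small case by the $1$-jet section, subdivide and glue by Chaperon's broken-trajectory construction, and use stabilization plus Viterbo--Th\'eret uniqueness to get a single continuous family on a fixed $X\times\mathbb{R}^N$. One point deserves more care: you place the intermediate variables in $(T^*X)^{k-1}$ and invoke ``pairing terms of the type $\langle q_j,p_j\rangle$'', but for a general compact manifold $X$ the cotangent bundle is not a vector space and no global quadratic form of this kind exists. In the actual construction one either embeds $X$ in some $\mathbb{R}^M$ (so that the fiber variables genuinely live in a Euclidean space and the coupling terms are honest quadratic forms), or one works with a more careful local-to-global patching as in Sikorav and Th\'eret. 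Without this step the ``quadratic at infinity'' condition on $X\times\mathbb{R}^N$ is not yet established. Apart from this, the outline is correct and is essentially the standard proof.
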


\subsection{Extracting critical values of generating functions}\label{minimax}

Classical minimax methods can be applied to extract a critical value of a function $f:X\times\mathbb{R}^N\to\mathbb{R}$ that is quadratic at infinity. Indeed, for any $a\in\mathbb{R}$ we write $X^a:=\{f\leq a\}$ to designate the sublevel $a$ of $f$. Since $f$ is quadratic at infinity, there exists $a_0\in\mathbb{R}$ such that for all $a\leq a_0$ the homotopy type of the sublevels $X^a$ and $X^{a_0}$ are the same, and we write $X^{-\infty}$ to refer to such sublevel sets. In addition to that there exists a splitting of $\mathbb{R}^N=\mathbb{R}^{N_+}\times\mathbb{R}^{N_-}$ for which the non-degenerate quadratic form $Q$ is negative definite on $\mathbb{R}^{N_-}$, and so the Thom isomorphism guarantees the existence of an isomorphism 
\[ T : H^*(X)\to H^{*+{N_-}}(X\times\mathbb{R}^N,X^{-\infty}),\]
where $H^*(X)$ is the cohomology of $X$ with coefficient in $\mathbb{Z}_2$ and $H^*(X\times\mathbb{R}^N,X^{-\infty})$ is the relative cohomology of $(X\times\mathbb{R}^N,X^{-\infty})$ with coefficient in $\mathbb{Z}_2$. For any $u\in H^*(X)\setminus\{0\}$ the number
\[c(f,u):=\inf\left\{a\in\mathbb{R}\ |\ i_a^*(T(u))\ne 0\right\},\]
where $i_a : (X^a,X^{-\infty})\hookrightarrow (X\times\mathbb{R}^N,X^{-\infty})$ is the inclusion, is a critical value of $f$.\\

\begin{remarque}\label{remarque minimax}
Zapolski in \cite{Zapolsky} uses homology instead of cohomology to extract the wanted critical values. Let $f:X\times\mathbb{R}^N\to\mathbb{R}$ be a generating function quadratic at infinity and $\alpha\in H_*(X)\setminus\{0\}$ a non zero homology class. Then
\[\mathcal{C}(f,\alpha):=\inf\{a\in\mathbb{R}\ |\ \widetilde{T}\alpha\in (i_a)_*(H_{*+{N_-}}(X^a,X^{-\infty}))\}, \]
where  $\widetilde{T} : H_*(X)\to H_{*+N_-}(X\times\mathbb{R}^N,X^{-\infty})$ comes from the Thom isomorphism in homology, is a critical value of $f$. If $\alpha_0$ is a generator of $H_n(X)$, and $\mu_0$ a generator of $H^n(X)$, where $n\in\mathbb{N}$ is the dimension of $X$, then $\mathcal{C}(f,\alpha_0)=c(f,\mu_0)$ (see for instance \cite{Viterbo}).\\
\end{remarque}

\noindent
Due to the uniqueness property of generating functions quadratic at infinity proved by Viterbo \cite{Viterbo} and Théret \cite{theret}, one deduces that if $f_1$ and $f_2$ are two generating functions quadratic at infinity that generate the same Legendrian $\Lambda\subset J^1X$ that is Legendrian isotopic to $\mathbb{O}_X\times\{0\}$, then $c(f_1,u)=c(f_2,u)$ for any $u\in H^*(X)\setminus\{0\}$. Thanks to this and to Theorem \ref{existence}, for any Legendrian $\Lambda$ that is Legendrian isotopic to $\mathbb{O}_X\times\{0\}$  and for any $u\in H^*(X)\setminus\{0\}$ the number $c(\Lambda,u):=c(f,u)$ is well defined, i.e.\ it does not depend on the generating function quadratic at infinity $f$ that generates $\Lambda$. Combining this with the previous discussion in Subsection \ref{generating function} we deduce that
\begin{equation}\label{eq1}
\Lambda\cap \mathbb{O}_X\times\{c(\Lambda,u)\}\ne\emptyset.
\end{equation}

\noindent
The translation selector constructed by Sandon is then given by 
\[c :\text{Cont}_0^c(\mathbb{R}^{2n}\times S^1,\xi_{st})\to\mathbb{R}\ \ \ \ \ \ \phi\mapsto c(\Lambda_\phi,u_0),\]
where $u_0$ is a generator of $H^{2n+1}(S^{2n}\times S^1)$ and $\Lambda_\phi\subset J^1(S^{2n}\times S^1)$ is the Legendrian associated to $\phi$ constructed in Subsection \ref{section graphe}. By \eqref{eq1} and Lemma \ref{lemme1} we come to the conclusion that $c(\phi)\in\text{Spectrum}(\phi)$. Furthermore Sandon in \cite{sandonthese} proved the following result.

\begin{theoreme}[\cite{sandonthese}]\label{translation selector}
The translation selector $c$ satisfies the following properties:
\begin{enumerate}
    \item $c(\phi)\geq 0$ for any $\phi$;
    \item if $\{\phi^t\}$ is a smooth path of compactly supported contactomorphisms starting at the identity, then $t\mapsto c(\phi^t)$ is continuous;
    \item $\left\lceil c(\varphi)\right\rceil \geq\left\lceil c(\phi\varphi)-c(\phi)\right\rceil$, in particular $\left\lceil c(\phi\varphi)\right\rceil\leq\left\lceil c(\phi)\right\rceil+\left\lceil c(\varphi)\right\rceil$ for any $\phi,\varphi$;
    \item if $\{\phi^t\}$ and $\{\varphi^t\}$ are smooth paths of compactly supported contactomorphisms starting at the identity such that $\alpha_{st}\left(\frac{d}{dt}\phi^t(x)\right)\leq\alpha_{st}\left(\frac{d}{dt}\varphi^t(x)\right)$ for all $t\in[0,1]$ and $x\in\mathbb{R}^{2n}\times S^1$ then $c(\phi^1)\leq c(\varphi^1)$.\\
\end{enumerate}
\end{theoreme}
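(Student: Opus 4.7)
The plan is to reduce each of the four properties to a corresponding statement about the cohomological minimax $c(f, u_0)$ applied to a generating function quadratic at infinity for $\Lambda_\phi$, using the definition $c(\phi) = c(\Lambda_\phi, u_0) = c(f_\phi, u_0)$ from Subsection \ref{minimax}. By Theorem \ref{existence}, for any smooth path $\{\phi^t\}$ starting at the identity there exists a continuous family $f^t : S^{2n} \times S^1 \times \mathbb{R}^N \to \mathbb{R}$ of generating functions quadratic at infinity for $\Lambda_{\phi^t}$, with $f^0$ a pure quadratic form $Q$.

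Property (2), continuity of $t \mapsto c(\phi^t)$, follows from continuity of the family $f^t$ (in the $C^0$ topology modulo a fixed quadratic form at infinity) together with the standard stability of the cohomological minimax under such perturbations. Property (1) then combines Property (2) with the computation $c(f^0, u_0) = c(Q, u_0) = 0$: the non-negativity uses crucially that $u_0$ is the top cohomology class, which selects an upper-type minimax dominating any critical value coming from the trivial $S^1$-family $\{((p_0, z), 0, 0)\}$ that $\Lambda_\phi$ carries over the point $p_0$ at infinity by construction. Equivalently, one verifies directly that $c(f_\phi, u_0) \geq 0$ for any generating function $f_\phi$ that agrees with $Q$ outside a compact subset of $S^{2n} \times S^1 \times \mathbb{R}^N$.

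For Property (3), I would use a product construction at the level of generating functions: given $f_\phi, f_\varphi$ generating $\Lambda_\phi, \Lambda_\varphi$, one builds a generating function for $\Lambda_{\phi\varphi}$ on a larger fiber, and a cup-product argument of Viterbo type yields the real-valued inequality $c(f_{\phi\varphi}, u_0) \leq c(f_\phi, u_0) + c(f_\varphi, u_0)$. The ceiling functions appear because the selector $c$ only detects the $z$-coordinate of translated points modulo the $\mathbb{Z}$-action in the Reeb direction on $\mathbb{R}^{2n} \times S^1$, so the inequality must be sharpened on $\mathbb{Z}$-cosets, which produces the rounded form stated; the sharper statement $\lceil c(\varphi) \rceil \geq \lceil c(\phi\varphi) - c(\phi) \rceil$ is then obtained by applying the basic inequality to $\varphi = \phi^{-1} (\phi \varphi)$ and tracking the round-up that arises from the $\mathbb{Z}$-ambiguity.

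Property (4), the monotonicity under pointwise domination of the contact Hamiltonians, is the main obstacle. The plan is to construct, for each $t$, generating functions $f^t_\phi \leq f^t_\varphi$ (after common stabilization) for $\Lambda_{\phi^t}, \Lambda_{\varphi^t}$, from which the elementary fact that $f \leq g$ implies $c(f, u_0) \leq c(g, u_0)$ for the cohomological minimax gives the conclusion at $t = 1$. The technical heart is choosing the Chaperon-type generating functions so that the hypothesis $\alpha_{st}\bigl(\frac{d}{dt}\phi^t\bigr) \leq \alpha_{st}\bigl(\frac{d}{dt}\varphi^t\bigr)$ on the contact Hamiltonians is inherited as a pointwise inequality on the generating functions; concretely, one can use an explicit formula expressing $f^t$ as a sum over a discretization of the path, with each summand involving an integral of the generating Hamiltonian over a small time interval, so that the Hamiltonian comparison transfers summand-by-summand. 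I expect the bookkeeping of the fiber variables introduced by the discretization, and the need to stabilize both families to a common fiber dimension and a common quadratic form at infinity, to be the most delicate part of the argument.
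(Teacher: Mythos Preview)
The paper does not prove this theorem; it is quoted from Sandon's thesis \cite{sandonthese} and used as a black box. So there is no ``paper's own proof'' to compare against, and I can only assess your sketch on its merits relative to Sandon's actual arguments.

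Your outlines for (1), (2) and (4) are in line with what Sandon does. For (2) the minimax is $C^0$-continuous in the generating function, and Theorem~\ref{existence} provides a continuous family. For (1) the relevant input is indeed that $\Lambda_\phi$ coincides with the $0$-section over $\{p_0\}\times S^1$ together with the choice of the top class; your appeal to continuity plus $c(\Id)=0$ alone would not give non-negativity, but you seem aware of this. For (4) the Chaperon/Bhupal discretization is exactly how monotonicity is obtained, and your caveat about matching the fibre data is accurate.

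There is, however, a genuine gap in your treatment of (3). You assert that a Viterbo-type product/cup-product argument yields the \emph{real-valued} inequality $c(\phi\varphi)\le c(\phi)+c(\varphi)$, and then explain the ceilings as a subsequent refinement coming from the $\mathbb{Z}$-ambiguity. This is backwards. In the contact setting the composition formula for generating functions is \emph{not} the symplectic sum: the graph of $\phi\varphi$ involves the conformal factor $e^\theta$ of $\varphi$ (see the map $\Theta$ in Subsection~\ref{section graphe}), and the resulting product generating function is not bounded above by $f_\phi\oplus f_\varphi$. Sandon's argument does not establish the real-valued inequality at all; instead she uses the $1$-periodicity in the Reeb direction to replace one of the two generating functions by an integer translate before applying the cup-product estimate, and it is precisely this integer shift that produces a ceiling on one term. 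The output is an inequality of the form $c(\phi\varphi)\le c(\phi)+\lceil c(\varphi)\rceil$, from which the stated property follows. Note also that if the real-valued inequality you claim were available, the ``sharper'' form $\lceil c(\varphi)\rceil\ge\lceil c(\phi\varphi)-c(\phi)\rceil$ would follow immediately by monotonicity of $\lceil\cdot\rceil$, so your proposed derivation via $\varphi=\phi^{-1}(\phi\varphi)$ is both unnecessary and does not yield the right inequality. In short: the ceilings are intrinsic to the proof of (3) in the contact case, not a cosmetic addition, and your sketch as written does not account for the conformal twist.
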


\begin{remarque}\label{remarque sur ordonnabilite}
   In \cite{sandonthese} Sandon shows that the translation selector satisfies also the following properties 
   \begin{enumerate}
       \item $c(\phi)=c(\phi^{-1})=0$ if and only if $\phi=\Id$. 
       \item $\left\lceil c(\varphi\phi\varphi^{-1})\right\rceil=\left\lceil c(\phi)\right\rceil$.
   \end{enumerate}
   These extra properties of the translation selector will not be used for the proof of Theorem \ref{geodesique shelukhin/fpr} and Theorem \ref{geodesique discriminante/oscillation}. However let us notice that they are used in \cite{sandonthese}, \cite{sandonmetrique} to deduce (universal) orderability of $(\mathbb{R}^{2n}\times S^1,\xi_{st})$ (similarly to the case of $(\mathbb{R}^{2n+1},\xi_{st})$ done by Bhupal \cite{bhupal}) and to show that the map
\[ \nu_c :\Cont_0^c(\mathbb{R}^{2n}\times S^1,\xi_{st})\to\mathbb{N}\ \ \ \ \phi\mapsto \left\lceil c(\phi)\right\rceil +\left\lceil c(\phi^{-1})\right\rceil \] 
is a conjugation invariant norm. 

\end{remarque}

\section{Proof of the results}\label{section proof}

The idea of the proof of Theorem \ref{geodesique shelukhin/fpr} and Theorem \ref{geodesique discriminante/oscillation} is the following. First we will compute the selected translation of contactomorphisms that are generated by Hamiltonian functions satisfying the hypotheses of Theorems \ref{geodesique shelukhin/fpr} and \ref{geodesique discriminante/oscillation}. Then we will show that all the norms we are working with are bounded from below by the translation selector. Finally a direct computation will allow to show that the length of the paths we are considering are equal to the selected translation of the time-one of these paths. We thus deduce that these paths are length minimizing paths and so are geodesics.\\

\subsection{Computation of the selected translation}

To any compactly supported time dependent function $H:[0,1]\times\mathbb{R}^{2n}\to\mathbb{R}, (t,p)\mapsto H^t(p)$, one can associate the smooth path of vector fields $ X^{{\omega_{st}}}_H : [0,1]\to\chi(\mathbb{R}^{2n})$ defined by $\iota_{X^{{\omega_{st}}}_H(t)}{\omega_{st}}=-dH^t$ for all $t\in\mathbb{R}$, where ${\omega_{st}}=\sum\limits_{i=1}^ndx_i\wedge dy_i$. We denote by $\psi_H^t$ its time $t$-flow for any $t\in\mathbb{R}$. Note that $\psi_H^t$ is an Hamiltonian symplectomorphism for any $t\in\mathbb{R}$. Recall that from the formula \eqref{contact vector field} of the introduction one can similarly associate to a smooth compactly supported function $h:[0,1]\times\mathbb{R}^{2n}\times S^1\to\mathbb{R}$ a path of contactomorphisms $\phi_h$.\\

In the next lemma for all $z\in\mathbb{R}$ we will write $[z]$ the corresponding element in $\mathbb{R}/\mathbb{Z}=S^1$ and debote by $\lambda_{st}$ the $1$-form $\sum\limits_{i=1}^ny_idx_i$ on $\mathbb{R}^{2n}$. \\

\begin{lemme}\label{relevé de contact}
Let $\{\phi_h^t\}$ be a smooth path of contactomorphisms generated by the Hamiltonian function 
\[\begin{aligned}
h:\mathbb{R}^{2n}\times S^1&\to\mathbb{R}\\
(p,z) &\mapsto H(p)
\end{aligned}\]
where $H : \mathbb{R}^{2n}\to\mathbb{R}$ is a smooth compactly supported function. Then for all $(p,z)\in\mathbb{R}^{2n}\times \mathbb{R}$ and for all $t\in\mathbb{R}$ 
\[\phi_h^t(p,[z])=\left(\psi_H^t(p),[z+F^t(p)]\right)\ \ \text{ and }\ \  (\phi_h^t)^*\alpha_{st}=\alpha_{st}\]
where
\begin{equation}\label{2} F^t(p)=\int_0^t\lambda_{st}\left(\psi_H^s(p)\right)\left(X^{\omega_{st}}_H\left(\psi_H^s(p)\right)\right)ds+tH(p).
\end{equation}
\end{lemme}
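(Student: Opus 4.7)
My plan is to prove both assertions by a direct ODE argument: define the candidate flow by the right-hand side of the formula and verify that it satisfies the defining equations of $\phi_h^t$, then deduce the strict contactomorphism property by computing the conformal factor's evolution.

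First I would unpack the contact Hamiltonian equations \eqref{contact vector field} in our specific setting. Since $h(p,z)=H(p)$ does not depend on $z$, we have $dh(\partial/\partial z)=0$, so the vector field $X_h$ on $\mathbb{R}^{2n}\times S^1$ satisfies $\alpha_{st}(X_h)=H$ and $d\alpha_{st}(X_h,\cdot)=-dH$. Writing $X_h=X^{\text{hor}}+f\,\partial_z$ with $X^{\text{hor}}$ tangent to $\mathbb{R}^{2n}$ and using $d\alpha_{st}=\sum dx_i\wedge dy_i=\omega_{st}$, the second equation forces $X^{\text{hor}}=X_H^{\omega_{st}}$, and the first equation then gives $f=H+\lambda_{st}(X_H^{\omega_{st}})$. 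So the explicit form of the contact vector field is
\[
X_h(q,w)\,=\,X_H^{\omega_{st}}(q)\,+\,\bigl(H(q)+\lambda_{st}(q)\bigl(X_H^{\omega_{st}}(q)\bigr)\bigr)\,\tfrac{\partial}{\partial z}.
\]

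Next I would set $\Phi^t(p,[z]):=(\psi_H^t(p),[z+F^t(p)])$ with $F^t$ as in \eqref{2}, and check that $\frac{d}{dt}\Phi^t=X_h\circ\Phi^t$ with $\Phi^0=\Id$. The horizontal component of the time derivative is $X_H^{\omega_{st}}(\psi_H^t(p))$ by definition of $\psi_H^t$, which matches the horizontal component of $X_h(\Phi^t(p,[z]))$. For the $z$-component, differentiating $F^t$ gives $\lambda_{st}(\psi_H^t(p))(X_H^{\omega_{st}}(\psi_H^t(p)))+H(p)$, which we need to equal $H(\psi_H^t(p))+\lambda_{st}(\psi_H^t(p))(X_H^{\omega_{st}}(\psi_H^t(p)))$. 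The key (and only nontrivial) input here is that $H$ is an integral of motion of its own Hamiltonian flow, so $H(\psi_H^t(p))=H(p)$, and the two expressions coincide. By uniqueness of flows we conclude $\Phi^t=\phi_h^t$.

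For the second claim $(\phi_h^t)^*\alpha_{st}=\alpha_{st}$, I would use Cartan's formula: writing $(\phi_h^t)^*\alpha_{st}=e^{g^t}\alpha_{st}$, differentiating in $t$ and applying $\mathcal{L}_{X_h}\alpha_{st}=\iota_{X_h}d\alpha_{st}+d(\alpha_{st}(X_h))=\iota_{X_h}d\alpha_{st}+dh$ together with the contact equation $\iota_{X_h}d\alpha_{st}=dh(\partial/\partial z)\,\alpha_{st}-dh$ yields
\[
\tfrac{d}{dt}\bigl((\phi_h^t)^*\alpha_{st}\bigr)\,=\,(\phi_h^t)^*\bigl(dh(\partial/\partial z)\,\alpha_{st}\bigr).
\]
Since $h$ is independent of $z$, the right-hand side vanishes, so $(\phi_h^t)^*\alpha_{st}$ is constant in $t$ and equals $\alpha_{st}$. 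I expect no real obstacle: the whole argument is a bookkeeping verification, with the only genuine ingredient being the conservation law $H\circ\psi_H^t=H$ used to match the $z$-components.
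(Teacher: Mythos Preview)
Your proof is correct and follows essentially the same route as the paper: both compute $X_h$ explicitly from the defining relations, use the conservation law $H\circ\psi_H^t=H$ to identify the $z$-component of the flow, and then apply Cartan's formula (with $dh(\partial/\partial z)=0$) to conclude $(\phi_h^t)^*\alpha_{st}=\alpha_{st}$. The only cosmetic difference is that you phrase the first part as ``define the candidate flow and verify the ODE via uniqueness'', whereas the paper simply integrates the vertical component of $X_h$ along the already-known horizontal flow $\psi_H^t$; the content is the same.
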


\vspace{2.ex}

\begin{proof}
A direct computation shows that the contact vector field $X_h$ generated by the function $h:\mathbb{R}^{2n}\times S^1\to\mathbb{R}$ (see the relations \eqref{contact vector field} in the introduction) in this case is given by 
\[X_h(p,[z])=X^{\omega_{st}}_H(p)+\left(H(p)+\lambda_{st}(p)\left(X^{\omega_{st}}_H(p)\right)\right)\frac{\partial}{\partial z}\ ,\ \text{ for all } (p,[z])\in\mathbb{R}^{2n}\times S^1.\]
Moreover, because the Hamiltonian function $H$ is time-independent, $H$ is constant along its flow, i.e.\ $H(\psi_H^t(p))=H(p)$ for all $p\in\mathbb{R}^{2n}$ and for all $t\in\mathbb{R}$. So we deduce the formula \eqref{2}.  To prove the fact that $\phi_h^t$ is an exact contactomorphism for all $t$, i.e.\ it preserves not only the contact distribution $\xi_{st}$ but also the contact form $\alpha_{st}=dz-\lambda_{st}$, we use the Cartan formula :
\[\left.\frac{d}{dt}\right\lvert_{t=s}(\phi_h^t)^*\alpha_{st}=(\phi_h^s)^*\left(\iota_{X_h}d\alpha_{st}+d\iota_{X_h}\alpha_{st}\right)=(\phi_h^s)^*\left(-dH+dH\right)\equiv 0\ ,\ \forall s\in\mathbb{R}\ .\]
Since $\phi_h^0=\Id$, we deduce that $\left(\phi_h^s\right)^*\alpha_{st}=\left(\phi_h^0\right)^*\alpha_{st}=\alpha_{st}$ for all $s\in\mathbb{R}$.\end{proof}

\vspace{2.ex}

\noindent
The hypothesis that we make about the smallness of the Hessian of $H:\mathbb{R}^{2n}\to\mathbb{R}$ allows us to guarantee that the only periodic orbits of $\{\psi_H^t\}_{t\in\mathbb{R}}$ of small periods are the constant ones. More precisely, identifying in the usual way the dual Euclidean space $(\mathbb{R}^{2n})^*$ with $\mathbb{R}^{2n}$, one can see the Hessian of any smooth function $H:\mathbb{R}^{2n}\to\mathbb{R}$ at a point $p\in\mathbb{R}^{2n}$ as a linear map $\Hess_p(H):\mathbb{R}^{2n}\to\mathbb{R}^{2n}$. The smallness of such maps will be measured in terms of the operator norm, i.e.\ if $A:\mathbb{R}^{2n}\to\mathbb{R}^{2n}$ is a linear map then \[\left|\left|\left|A\right|\right|\right|:=\underset{v\in\mathbb{R}^{2n}\setminus\{0\}}\sup \frac{|A v|}{|v|} \text{ where } |v| \text{ is the standard Euclidean norm of } v\in\mathbb{R}^{2n}\setminus\{0\} .\]

\begin{lemme}\label{petite hessienne}
Let $H:\mathbb{R}^{2n}\to\mathbb{R}$ be a compactly supported Hamiltonian function such that $\underset{p\in\mathbb{R}^{2n}}\sup|||\operatorname{Hess}_p(H)|||<2\pi$. If there exists $0<T\leq 1$ and $p\in\mathbb{R}^{2n}$ such that $\psi_H^T(p)=p$ then $\psi_H^t(p)=p$ for every $t\in\mathbb{R}$.\\ 
\end{lemme}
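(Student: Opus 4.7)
The plan is to argue by contradiction: assume there exists $p \in \mathbb{R}^{2n}$ with $\psi_H^T(p) = p$ for some $T \in (0,1]$, but $p$ is not a fixed point of the Hamiltonian vector field $X_H^{\omega_{st}}$. Then $\gamma(t) := \psi_H^t(p)$ is a non-constant $T$-periodic orbit. Identifying $\mathbb{R}^{2n}$ with $\mathbb{C}^n$ via the standard complex structure $J$, the Hamilton equation reads $\dot\gamma(t) = J\nabla H(\gamma(t))$, and differentiating once more gives
\[
\ddot\gamma(t) \;=\; J\,\mathrm{Hess}_{\gamma(t)}(H)\,\dot\gamma(t).
\]
Setting $\eta(t) := \dot\gamma(t)$, the loop $\eta: [0,T] \to \mathbb{R}^{2n}$ is smooth, $T$-periodic, not identically zero (since $\gamma$ is non-constant), and has vanishing mean because $\int_0^T \eta(t)\,dt = \gamma(T)-\gamma(0) = 0$.

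The key estimate comes from the hypothesis: denoting $M := \sup_{p\in\mathbb{R}^{2n}} |||\mathrm{Hess}_p(H)||| < 2\pi$, and using that $|||J||| = 1$, we obtain the pointwise bound
\[
|\dot\eta(t)| \;\leq\; |||\mathrm{Hess}_{\gamma(t)}(H)|||\cdot|\eta(t)| \;\leq\; M\,|\eta(t)|,
\]
and therefore $\int_0^T |\dot\eta|^2\,dt \leq M^2 \int_0^T |\eta|^2\,dt$.

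I now apply the classical Wirtinger (Poincaré) inequality to $\eta$. Since each component of $\eta$ is a $T$-periodic real-valued function with zero mean, Wirtinger's inequality componentwise and then summation yield
\[
\int_0^T |\eta(t)|^2\,dt \;\leq\; \left(\frac{T}{2\pi}\right)^{\!2}\int_0^T |\dot\eta(t)|^2\,dt.
\]
Combining the two estimates gives
\[
\int_0^T |\eta|^2\,dt \;\leq\; \left(\frac{TM}{2\pi}\right)^{\!2}\int_0^T |\eta|^2\,dt.
\]
Since $T \leq 1$ and $M < 2\pi$, the constant $\frac{TM}{2\pi}$ is strictly less than $1$, so the inequality forces $\int_0^T |\eta|^2\,dt = 0$, hence $\eta \equiv 0$. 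But this contradicts the non-constancy of $\gamma$. Therefore $p$ must be a fixed point of $X_H^{\omega_{st}}$, which gives $\psi_H^t(p) = p$ for every $t \in \mathbb{R}$.

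There is no real obstacle here: the argument is the standard Hofer--Zehnder style estimate that shows small-Hessian Hamiltonians admit no short non-trivial periodic orbits. The only subtle point is the application of Wirtinger to a vector-valued periodic function, which reduces immediately to the scalar case because the mean-zero condition $\gamma(T) = \gamma(0)$ gives mean zero in each coordinate. The strictness of the inequality $M < 2\pi$ combined with $T \leq 1$ is exactly what allows the final contraction factor $(TM/2\pi)^2$ to be strictly less than $1$.
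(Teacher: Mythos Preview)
Your proof is correct and follows essentially the same approach as the paper: apply the Wirtinger/Poincar\'e inequality to $\dot\gamma$ (the paper does this via an explicit Fourier expansion after rescaling the period to $1$) and combine it with the pointwise bound on $\ddot\gamma$ coming from the Hessian hypothesis. Your handling of the strict inequality is in fact slightly cleaner than the paper's, since you obtain the contradiction directly from $M<2\pi$ and $T\leq 1$ rather than from a claimed strictness in the Wirtinger step.
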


\begin{proof}
Let $T\in\mathbb{R}_{>0}$ and $p\in\mathbb{R}^{2n}$ be such that $\psi_H^T(p)=p$ and denote by $\gamma :\mathbb{R}/\mathbb{Z}\to\mathbb{R}^{2n}$ the loop $\gamma(t)=\psi_H^{tT}(p)$. The speed $t\mapsto \dot\gamma(t)=X_{TH}(\gamma(t))$ of $\gamma$ is again a smooth loop. Consider the Fourier series associated to this loop $\dot\gamma$
\[\sum\limits_{k\in\mathbb{Z}}e^{2i\pi Jt}x_k \text{ for all } t\in S^1 \text{ where } x_k\in\mathbb{R}^{2n} \text{ and } J=\left (
   \begin{array}{c c c}
      0 & \Id_n \\
      \text{-Id}_n & 0\\
   \end{array}\ 
\right)\ .\]
Note that $\dot\gamma(t)=JT\nabla H(\gamma(t))$ for any $t\in S^1$, where $\nabla H$ is the Euclidean gradient of $H$, so $\ddot\gamma(t)=T\operatorname{Hess}_{\gamma(t)}H(\dot\gamma(t))$ for any $t\in S^1$ and the Fourier series of the loop $\ddot\gamma$ is given by \[t\mapsto \sum\limits_{k\in\mathbb{Z}}2i\pi Je^{2i\pi Jt}x_k\ .\] Using the fact that $\int_0^1\dot\gamma(t)dt=\gamma(1)-\gamma(0)=x_0=0\in\mathbb{R}^{2n}$ and the Parseval's identity we have 
\begin{equation}\label{Fourier}
||\dot\gamma||_{\mathcal{L}^2}=\sqrt{\sum\limits_{k\in\mathbb{Z}\setminus\{0\}}|x_k|^2}\leq\sqrt{\sum\limits_{k\in\mathbb{Z}\setminus\{0\}} k^2|x_k|^2}=\frac{1}{2\pi}||\ddot\gamma||_{\mathcal{L}^2}\ .
\end{equation}
Moreover the above inequality is a strict inequality in the case when $\gamma$ is not the constant loop.\\
On the other hand
\[\begin{aligned}||t\mapsto\ddot\gamma(t)||_{\mathcal{L}^2}&=||t\mapsto T\Hess_{\gamma(t)}H(\dot\gamma(t))||_{\mathcal{L}^2}\\
&=T||t\mapsto\sum\limits_{k\in\mathbb{Z}}e^{2i\pi Jt}\Hess_{\gamma(t)} H(x_k)||_{\mathcal{L}^2}\\
&\leq T\sqrt{\sum\limits_{k\in\mathbb{Z}}\underset{p\in\mathbb{R}^{2n}}\sup |||\Hess_p(H)|||^2|x_k|^2}\\
&=T\underset{p\in\mathbb{R}^{2n}}\sup|||\Hess_p(H)|||\sqrt{\sum\limits_{k\in\mathbb{Z}\setminus\{0\}}|x_k|^2}\\
&\leq T2\pi||\dot\gamma||_{\mathcal{L}^2}\ .
\end{aligned}\]
So combining this inequality with the inequality \eqref{Fourier} we deduce that $||\dot\gamma||_{\mathcal{L}^2}<T||\dot\gamma||_{\mathcal{L}^2}$ when $\gamma$ is not a constant loop, and so $T>1$.
\end{proof}

\begin{remarque}
  The reader can find a similar proof in \cite{hoferzehnder}.
\end{remarque}

\noindent
From the two previous lemmas we deduce the selected translation of the paths considered in Theorems \ref{geodesique shelukhin/fpr} and \ref{geodesique discriminante/oscillation} .\\

\begin{proposition} \label{calcul explicite}
Let $\{\phi^t\}_{t\in[0,1]}$ be a smooth path of contactomorphisms generated by a compactly supported Hamiltonian function 
\[h : \mathbb{R}^{2n}\times S^1\to\mathbb{R}\ \ \ \ \ \ 
(p,[z])\mapsto H(p)\]
with $\underset{p\in\mathbb{R}^{2n}}\sup |||\Hess_p(H)|||<2\pi$. Then
\[c(\phi_h^1)=\max H\ \  \text{ and }\ \  c\left((\phi_h^1)^{-1}\right)=-\min H.\]
\end{proposition}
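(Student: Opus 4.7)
My plan is to (i) identify $\operatorname{Spectrum}(\phi_h^1)$ explicitly using the two preceding lemmas, and then (ii) pin down which element of this spectrum is picked out by $c$, by combining continuity of the selector, Sard's theorem, and a direct generating-function calculation in the $\mathcal{C}^1$-small regime.

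For (i), I would start from the description of the lift given by Lemma \ref{relevé de contact}: $\widetilde{\phi_h^1}(p,z)=(\psi_H^1(p),z+F^1(p))$, so $(p,[z])$ is a $t$-translated point exactly when $\psi_H^1(p)=p$ and $F^1(p)=t$. The Hessian bound lets me invoke Lemma \ref{petite hessienne}: any such $p$ is fixed by the whole flow $\{\psi_H^s\}_{s\in\mathbb{R}}$, hence $X_H^{\omega_{st}}(p)=0$ and $p\in\operatorname{Crit}(H)$. At these points the integrand in formula \eqref{2} vanishes and $F^1(p)=H(p)$, giving
\[\operatorname{Spectrum}(\phi_h^1)=H(\operatorname{Crit}(H)).\]
Replacing $H$ by $sH$ (still satisfying the Hessian hypothesis for $s\in[0,1]$) yields $\operatorname{Spectrum}(\phi_h^s)=sH(\operatorname{Crit}(H))$, a set of Lebesgue measure zero by Sard's theorem.

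For (ii), I would study $\gamma(s):=c(\phi_h^s)/s$ on $(0,1]$. By (i), $\gamma(s)\in H(\operatorname{Crit}(H))$, and by property 2 of Theorem \ref{translation selector}, $s\mapsto c(\phi_h^s)$ is continuous on $[0,1]$, so $\gamma$ is continuous on $(0,1]$. The image $\gamma((0,1])$ is therefore a connected subset of the measure-zero set $H(\operatorname{Crit}(H))$; since any connected subset of $\mathbb{R}$ contained in a measure-zero set must be a single point, $\gamma$ is constant on $(0,1]$. It remains to identify this constant. For $s$ sufficiently small, $\phi_h^s$ is $\mathcal{C}^1$-close to the identity, so by the discussion at the end of Subsection \ref{section graphe} the Legendrian $\Lambda_{\phi_h^s}\subset J^1(S^{2n}\times S^1)$ is a genuine section $j^1 f_s$ of a function $f_s:S^{2n}\times S^1\to\mathbb{R}$. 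The critical values of $f_s$ are exactly the translations of $\phi_h^s$, namely $sH(\operatorname{Crit}(H))$, and in particular $\max f_s=s\max H$. Since $u_0$ generates the top cohomology $H^{2n+1}(S^{2n}\times S^1;\mathbb{Z}_2)$, the standard fact that the top-class minimax on a closed manifold recovers the maximum gives $c(\phi_h^s)=c(f_s,u_0)=\max f_s=s\max H$, so $\gamma(s)=\max H$ for small $s$. By constancy of $\gamma$ on $(0,1]$, this equality persists at $s=1$, yielding $c(\phi_h^1)=\max H$.

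For the statement about $(\phi_h^1)^{-1}$, I would use that Lemma \ref{relevé de contact} shows $\phi_h^t$ is strictly contact, so a direct computation identifies the inverse path $\{(\phi_h^t)^{-1}\}$ with the flow of the autonomous Hamiltonian $-H$, whose Hessian still satisfies the hypothesis. Running the same argument with $-H$ in place of $H$ produces $c((\phi_h^1)^{-1})=\max(-H)=-\min H$. The principal difficulty in this plan is the small-$s$ identification $c(\phi_h^s)=\max f_s$: one has to use both that $\Lambda_{\phi_h^s}$ becomes a Legendrian section for $\mathcal{C}^1$-small $\phi_h^s$ and the topological fact that a top-cohomology minimax computes the maximum. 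Everything else is bookkeeping: Lemmas \ref{relevé de contact} and \ref{petite hessienne} for the spectrum, and continuity of $c$ together with Sard's theorem to propagate the small-$s$ value to $s=1$.
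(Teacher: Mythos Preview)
Your proof is correct and follows essentially the same route as the paper: identify the spectrum via Lemmas \ref{relevé de contact} and \ref{petite hessienne}, use continuity of the selector together with Sard's theorem to reduce to a single critical value along the whole path, and pin down that value as $\max H$ via the generating-function description in the $\mathcal{C}^1$-small regime (top class gives the maximum). Your packaging via $\gamma(s)=c(\phi_h^s)/s$ and the observation that a connected subset of a measure-zero subset of $\mathbb{R}$ is a point is a slightly cleaner way to phrase the constancy argument than the paper's appeal to nowhere-density, but the substance is identical.
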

\vspace{2.ex}

\begin{proof}
First let us show that \[\text{Spectrum}(\phi_h^t)=\{tH(p)\ |\ d_pH=0\} \text{ for all } t\in[0,1].\] 
Indeed if $p\in\mathbb{R}^{2n}$ is such that $d_pH=0$ then using Lemma \ref{relevé de contact} we have that for all $[z]\in S^1$ and $t\in[0,1]$
\[\phi_h^t(p,[z])=\left(p,[z+tH(p)]\right)\ \ \ \text{ and }\ \ \  (\phi_h^t)^*\alpha_{st}=\alpha_{st}\ ,\]
and so $\{tH(p)\ |\ d_pH=0\}\subset\text{Spectrum}(\phi_h^t)$. Conversely, let $t\in[0,1]$ and $a\in\text{Spectrum}(\phi_h^t)$. By definition of the spectrum and using again Lemma \ref{relevé de contact}, this implies that there exists $(p,z)\in\mathbb{R}^{2n}\times \mathbb{R}$ such that 
\[\widetilde{\phi_h^t}(p,z)=(p,z+a)=\left(\psi_H^t(p),z+F^t(p)\right)\ .\]
In particular $p$ is a fixed point of $\psi_H^t$. Lemma \ref{petite hessienne} ensures that under the assumptions we made on the Hessian of $H$ we have $\psi_H^s(p)=p$ for all $s\in[0,1]$ and $d_pH=0$. So we conclude that $F^t(p)=tH(p)=a$ and that $\text{Spectrum}(\phi_h^t)\subset\{tH(p)\ |\ d_pH=0\}$. \\

\noindent
Moreover, by Theorem \ref{translation selector}, the map $t\mapsto c(\phi_h^t)\in\text{Spectrum}(\phi_h^t)$ is continuous, and since the set of critical values of $H$ is a nowhere dense set due to Sard's theorem, there exists a critical point $p_1\in\mathbb{R}^{2n}$ of $H$ such that $c(\phi_h^t)=tH(p_1)$ for all $t\in [0,1]$. It remains to show that $H(p_1)=\max H$. \\

\noindent
For $\varepsilon>0$ small enough, $\Lambda_{\phi_h^t} : S^{2n}\times S^1\to J^1(S^{2n}\times S^1)$ is a Legendrian section for all $t\in[0,\varepsilon]$, and so $\Lambda_{\phi_h^t}=j^1f^t$ is equal to the $1$-jet of a generating function $f^t : S^{2n}\times S^1\to\mathbb{R}$ without extra coordinates. It is well known that in this case $c(f^t,u_0)=\max f^t$ when $u_0$ denotes the generator of $H^{2n+1}(S^{2n}\times S^1)$: to see this one can combine Remark \ref{remarque minimax} with the arguments of Chapter 10 of \cite{polterovichrosen}. In addition to this, since $f^t$ is a generating function for $\phi_h^t$ we have
\[\{\text{Critical values of } f^t\}=\text{Spectrum}\left(\phi_h^t\right)=\{tH(p)\ |\ d_pH=0\}\ .\]
We then deduce that \[c(\phi_h^t)=\max\ f^t=t\max H=tH(p_1),\]
so $H$ reaches its maximum at the point $p_1$. Thus $c(\phi_h^1)=\max H$. Noticing that the Hamiltonian function $-h$ generates the path $\{(\phi_h^t)^{-1}\}=\{\phi_h^{-t}\}$ the same proof allows us to show that $c((\phi_h^1)^{-1})=-\min H$.\end{proof}

\vspace{2.ex}

\noindent
\subsection{A lower bound for the Shelukhin norms and the FPR norms}

The following proposition will allow us to find a lower bound for the Shelukhin and FPR norms in terms of the translation selector.\\

\begin{proposition}\label{inégalité géodésique shelukhin}
Let $\phi\in\Cont_0^c(\mathbb{R}^{2n}\times S^1,\xi_{st})$ be a contactomorphism and $k:[0,1]\times \mathbb{R}^{2n}\times S^1\to\mathbb{R}$ a compactly supported Hamiltonian that generates $\phi$, i.e.\ $\phi_k^1=\phi$. Then
\[c(\phi)\leq\int_0^1\max k^t dt\ \ \ \ \text{ and }\ \ \ \ c(\phi^{-1})\leq -\int_0^1\min k^tdt\ .\]
\end{proposition}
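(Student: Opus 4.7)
The strategy is to compare $\{\phi_k^t\}$ with an auxiliary path whose selected translation is computable via Proposition~\ref{calcul explicite}, exploiting the monotonicity property 4 of Theorem~\ref{translation selector}. First, the second inequality for $c(\phi^{-1})$ reduces to the first: the path $\{\phi_k^{1-t}\circ(\phi_k^1)^{-1}\}$ starts at the identity and ends at $\phi^{-1}$, and a direct computation from the relation $\alpha_{st}(\tfrac{d}{dt}\phi_k^t(x))=k^t(\phi_k^t(x))$ shows it is generated by the compactly supported Hamiltonian $\tilde k^t(y):=-k^{1-t}(y)$, whose integrated maximum equals $-\int_0^1\min k^s\,ds$. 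It therefore suffices to establish the first inequality.

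Fix $\varepsilon>0$. Since each $k^t$ is compactly supported, $\max k^t\geq 0$. Smooth the function $t\mapsto\max k^t$ to obtain a smooth $M^\varepsilon:[0,1]\to\mathbb{R}_{>0}$ satisfying $M^\varepsilon(t)\geq\max k^t$ and $A_\varepsilon:=\int_0^1 M^\varepsilon(t)\,dt\leq\int_0^1\max k^t\,dt+\varepsilon$. Choose a radius $R>0$ with $\Supp(\{\phi_k^t\})\subset B^{2n}(R)\times S^1$, and a smooth bump function $\chi:\mathbb{R}^{2n}\to[0,1]$ with $\chi\equiv 1$ on $B^{2n}(R)$ and $\Supp(\chi)\subset B^{2n}(R+L)$, with the transition width $L$ chosen so large that $A_\varepsilon\sup_p|||\Hess_p\chi|||<2\pi$. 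Define the compactly supported time-dependent Hamiltonian $\tilde h^t(p,z):=M^\varepsilon(t)\chi(p)$.

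On the region where $\chi\equiv 1$ the defining relations \eqref{contact vector field} force the contact vector field of $\tilde h^t$ to equal $M^\varepsilon(t)\,\partial_z$, so its flow $(p,z)\mapsto(p,z+\int_0^t M^\varepsilon(s)\,ds)$ stays inside this region, and consequently $\tilde h^t(\phi_{\tilde h}^t(x))=M^\varepsilon(t)$ there. The key pointwise inequality $\alpha_{st}(\tfrac{d}{dt}\phi_k^t(x))\leq\alpha_{st}(\tfrac{d}{dt}\phi_{\tilde h}^t(x))$ required by property 4 is then checked in two cases: if $x\in B^{2n}(R)\times S^1$ then the LHS equals $k^t(\phi_k^t(x))\leq\max k^t\leq M^\varepsilon(t)$, which is precisely the RHS; if $x\notin B^{2n}(R)\times S^1$ then $x\notin\Supp(\{\phi_k^t\})$, so $\phi_k^t(x)=x$ and $k^t(x)=0$ forces the LHS to vanish, while RHS $=M^\varepsilon(t)\chi(\phi_{\tilde h}^t(x))\geq 0$. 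Property 4 then yields $c(\phi)\leq c(\phi_{\tilde h}^1)$.

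To finish, observe that the correspondence Hamiltonian $\mapsto$ contact vector field is $\mathbb{R}$-linear, so the contact vector field of $\tilde h^t$ is $M^\varepsilon(t)$ times that of the time-independent Hamiltonian $\chi$; consequently $\{\phi_{\tilde h}^t\}$ is a time reparametrization of the flow generated by $\chi$ and one has $\phi_{\tilde h}^1=\phi_{H_\varepsilon}^1$ with $H_\varepsilon(p,z):=A_\varepsilon\chi(p)$. The choice of $\chi$ guarantees $\sup_p|||\Hess_p H_\varepsilon|||=A_\varepsilon\sup_p|||\Hess_p\chi|||<2\pi$, while $\max H_\varepsilon=A_\varepsilon$, so Proposition~\ref{calcul explicite} gives $c(\phi_{H_\varepsilon}^1)=A_\varepsilon\leq\int_0^1\max k^t\,dt+\varepsilon$. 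Letting $\varepsilon\to 0$ completes the proof. The main delicate step is the pointwise monotonicity check above: it crucially relies on the fact that $\max k^t\geq 0$, so that the auxiliary Hamiltonian $\tilde h^t$ is non-negative and the inequality holds trivially on the transition annulus $(B^{2n}(R+L)\setminus B^{2n}(R))\times S^1$, where $\chi\in(0,1)$ but $\phi_k$ already acts as the identity.
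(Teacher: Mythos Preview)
Your proof is correct and takes a genuinely different route from the paper's. The paper appeals to Zapolsky's inequality (Lemma~\ref{zapolski}) for Legendrian spectral invariants: it realizes $\Lambda_{\phi_k^t}$ as the image of the zero section under an ambient contact isotopy of $J^1(S^{2n}\times S^1)$, computes that the contact Hamiltonian of this isotopy along the zero section equals $k^t\circ\phi_k^t$ (this is where the careful bookkeeping of contact \emph{forms}, not just contact structures, in Section~\ref{section graphe} is used), and then reads off the bound directly. Your argument instead treats the selector $c$ as a black box satisfying the monotonicity property~4 of Theorem~\ref{translation selector}, and bootstraps from the explicit computation of Proposition~\ref{calcul explicite}: you dominate $k^t$ by a separable Hamiltonian $M^\varepsilon(t)\chi(p)$, verify the pointwise inequality of property~4 (crucially using $\max k^t\geq 0$ so the comparison is trivial outside $B^{2n}(R)\times S^1$), reparametrize to reduce to the autonomous Hamiltonian $A_\varepsilon\chi$ with small Hessian, and invoke Proposition~\ref{calcul explicite}. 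The paper's approach is shorter once the Legendrian graph machinery is in place and needs no auxiliary constructions; your approach is more elementary in that it avoids the external Lemma~\ref{zapolski} and the delicate form-preserving identifications of Section~\ref{section graphe}, relying only on results already established in the paper. One minor point: when you write $\chi(\phi_{\tilde h}^t(x))$ you of course mean $\chi$ applied to the $\mathbb{R}^{2n}$-component (which is well defined since $\tilde h$ is $z$-independent and Lemma~\ref{relevé de contact} applies); this is clear from context but worth saying explicitly.
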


\vspace{2.ex}
\noindent
To prove this proposition we will use Lemma 2.6 of \cite{Zapolsky}. To state his lemma Zapolski in \cite{Zapolsky} fixes a non-zero homology class. We will state this lemma by fixing a non-zero cohomology class instead. By Remark \ref{remarque minimax} the two approaches are equivalent. \\

\begin{lemme}[\cite{Zapolsky}]\label{zapolski}
Let $X$ be a closed manifold of dimension $n$, $\{\Phi^t\}_{t\in[0,1]}$ be a smooth path of contactomorphisms in $\Cont_0^c(J^1X,\Ker(\alpha_X))$ starting at the identity and $u_0\in H^n(X)\setminus\{0\}$ be the top class. Then 
\[ c(\Phi^1(\mathbb{O}_X\times\{0\}),u_0)\leq \int_0^1\underset{x\in X}\max\ \alpha_X\left(\left.\frac{d}{ds}\right\lvert_{s=t}\Phi^s(\sigma_0(x))\right)dt\ ,\]
where $\sigma_0 : X\to J^1X$, $x\mapsto (x,0,0)$ is the $0$-section.\\
\end{lemme}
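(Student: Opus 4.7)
The plan is to control the spectral value $c(t):=c(\Lambda^t,u_0)$ along the path of Legendrians $\Lambda^t:=\Phi^t(\mathbb{O}_X\times\{0\})$ by realizing the time-derivative of $c(t)$ as the contact Hamiltonian of $\{\Phi^t\}$ evaluated at a minimax-realizing point of $\Lambda^t$, and then integrating.

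First I would apply Theorem \ref{existence} to produce a (piecewise-)smooth family $\{f^t:X\times\mathbb{R}^N\to\mathbb{R}\}_{t\in[0,1]}$ of generating functions quadratic at infinity generating $\Lambda^t$, normalized so that $f^0(x,v)=Q(v)$ is purely a non-degenerate quadratic form in the fiber variables. This choice generates exactly $\mathbb{O}_X\times\{0\}$ and yields $c(0)=c(f^0,u_0)=0$. Since the spectral invariant depends only on the Legendrian, not the specific generating function quadratic at infinity, any such choice is admissible.

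The second step is the key pointwise identity. The smooth map $q\mapsto\Phi^t(\sigma_0(q))$ parametrizes $\Lambda^t$ by $X$, and lifts through $j^1_{f^t}:\Sigma_{f^t}\to\Lambda^t$ to a smooth parametrization $q\mapsto(x^t(q),v^t(q))\in\Sigma_{f^t}$. Differentiating in $t$, with $q$ held fixed, the equality
\[f^t(x^t(q),v^t(q))=z\text{-coordinate of }\Phi^t(\sigma_0(q)),\]
and using the fiber-critical identity $\partial_v f^t\equiv 0$ on $\Sigma_{f^t}$ together with the definition $\alpha_X=dz-\lambda_X$, produces
\[\frac{\partial f^t}{\partial t}(x^t(q),v^t(q))=\alpha_X\!\left(\frac{d}{ds}\bigg|_{s=t}\Phi^s(\sigma_0(q))\right).\]

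Third, I would invoke the classical bound for the minimax value of a family of generating functions quadratic at infinity (in the spirit of Viterbo \cite{Viterbo}, expounded for instance in chapter 10 of \cite{polterovichrosen}): the function $c(t)$ is Lipschitz, and at every point of differentiability there exists a critical point $(x^\star,v^\star)\in\Sigma_{f^t}$ of $f^t$ with critical value $c(t)$ satisfying $\dot c(t)=\partial_t f^t(x^\star,v^\star)$. Writing $(x^\star,v^\star)=(x^t(q^\star),v^t(q^\star))$ and combining with the identity of the previous step gives
\[\dot c(t)\leq \max_{x\in X}\alpha_X\!\left(\frac{d}{ds}\bigg|_{s=t}\Phi^s(\sigma_0(x))\right),\]
and integrating over $[0,1]$ with $c(0)=0$ yields the claimed inequality.

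The main obstacle is making the derivative formula of the third step fully rigorous when $c(t)$ fails to be smooth and $f^t$ is only piecewise smooth. I would handle this by subdividing $[0,1]$ into subintervals on which $f^t$ can be taken smooth, exploiting the Lipschitz (hence absolutely continuous) character of $c$, and if needed establishing directly the finite-difference inequality $c(t)-c(s)\leq\int_s^t\max_{(x,v)\in\Sigma_{f^r}}\partial_r f^r(x,v)\,dr$ by the standard Viterbo comparison argument, which sidesteps pointwise differentiability and converts directly into the desired integral bound.
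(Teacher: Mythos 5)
The paper does not prove this lemma: it is quoted verbatim (up to the homology/cohomology translation of Remark \ref{remarque minimax}) from Lemma 2.6 of \cite{Zapolsky}, so there is no in-paper argument to compare against. Your proposal is the standard generating-function proof of such estimates and is essentially correct. The normalization $f^0=Q$ gives $c(0)=0$; the pointwise identity $\partial_t f^t(x^t(q),v^t(q))=\alpha_X\bigl(\tfrac{d}{ds}\big|_{s=t}\Phi^s(\sigma_0(q))\bigr)$ is exactly the right computation (differentiate $f^t\circ(x^t,v^t)=z\circ\Phi^t\circ\sigma_0$, use $\partial_v f^t=0$ on $\Sigma_{f^t}$ and cancel the $\lambda_X$-term against $\partial_x f^t\cdot\dot x^t$); and integrating the a.e.\ derivative of the Lipschitz function $t\mapsto c(f^t,u_0)$ closes the argument. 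Two points deserve care. First, $j^1_{f^t}$ is only an immersion, so the global lift $q\mapsto(x^t(q),v^t(q))$ need not exist; but since $0$ is a regular value of $\partial_v f^t$, local smooth lifts over any $(t_0,q_0)$ always exist, and that is all the pointwise identity requires. Second, and more importantly, your fallback via the "standard Viterbo comparison argument" is too coarse as stated: the basic $C^0$-comparison gives $c(f^t,u)-c(f^s,u)\leq\max_{X\times\mathbb{R}^N}(f^t-f^s)$ with the maximum over the \emph{whole} domain, where $\partial_t f^t$ has no interpretation as a contact Hamiltonian and could exceed the right-hand side of the lemma. You genuinely need the refined first-variation statement you invoke first — that at a.e.\ $t$ the derivative $\dot c(t)$ equals $\partial_t f^t$ evaluated at some critical point of $f^t$ at level $c(t)$, hence at a point of $\Sigma_{f^t}$ — which is standard (it is the minimax analogue of the first variation of critical values) but should be cited or proved rather than replaced by the crude comparison. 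With that proviso the argument is complete.
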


\noindent
In the following proof of Proposition \ref{inégalité géodésique shelukhin} we use the notations and construction of Subsection \ref{section graphe}.\\

\begin{proof}[Proof of Proposition \ref{inégalité géodésique shelukhin}]
Let $\widetilde{\phi_k}^t\in\Cont_0^c(\mathbb{R}^{2n+1},\xi_{st})^{z-per}$ be the $1$-periodic contactomorphism associated to $\phi_k^t$ for all $t\in[0,1]$ and
\[ [0,1]\times S^{2n}\times S^1\to J^1(S^{2n}\times S^1)\ \ \ \ \ \ \ (t,(p,[z]))\mapsto \Lambda_{{\phi_k}^t}(p,[z])\]
 the associated Legendrian isotopy. The Legendrian isotopy extension theorem guarantees the existence of a smooth path of contactomorphisms $\{\Phi^t\}\subset\Cont_0^c\left(J^1(S^{2n}\times S^1),\Ker(\alpha_{S^{2n}\times S^1})\right)$ starting at the identity such that $\Phi^t\circ\Lambda_{\Id}\equiv \Lambda_{{\phi_k}^t}$ for all $t\in[0,1]$. We claim that for all $(p,[z])\in S^{2n}\setminus\{p_0\}\times S^1$ and for all $t\in[0,1]$
\begin{equation}\label{equation4}
\alpha_{S^{2n}\times S^1}\left(\frac{d}{dt}\Phi^t(\Lambda_{\Id}(p,[z]))\right)=k^t(\phi_k^t(\overline{\psi}(p,[z])))\ .
\end{equation}
Indeed, since the maps $\Psi$, $\text{pr}$ and $\Theta$ of Subsection \ref{section graphe} preserve the contact forms we have 
\[\begin{aligned}
\alpha_{S^{2n}\times S^1}\left(\frac{d}{dt}\Phi^t(\Lambda_{\Id}(p,[z]))\right)&=\alpha_{S^{2n}\times S^1}\left(\frac{d}{dt}\Psi^{-1}\left(\Lambda^{\mathbb{R}^{2n}\times S^1}_{\phi_k^t}(\overline{\psi}(p,[z]))\right)\right)\\
&=(\Psi^{-1})^*\alpha_{S^{2n}\times S^1}\left(\frac{d}{dt}\text{pr}\left(\Lambda^{\mathbb{R}^{2n+1}}_{\widetilde{\phi_k}^t}\left(\overline{\psi}(p,z)\right)\right)\right)\\
&=\text{pr}^*\alpha_{\mathbb{R}^{2n}\times S^1}\left(\frac{d}{dt}\Theta\left(\text{gr}_{\alpha_{st}}(\widetilde{\phi_k^t})\left(\overline{\psi}(p,z)\right)\right)\right)\\
&=\Theta^*\alpha_{\mathbb{R}^{2n+1}}\left(\frac{d}{dt}\text{gr}_{\alpha_{st}}(\widetilde{\phi_k^t})(\overline{\psi}(p,z))\right)\\
&=(\alpha^2_{st}-e^\theta\alpha^1_{st})\left(\frac{d}{dt}\left((p,z),\widetilde{\phi_k^t}(p,z),\tilde{g}_k^t(\overline{\psi}(p,z))\right)\right)\\
&=\alpha_{st}^2\left(\frac{d}{dt}\widetilde{\phi_k^t}(p,z)\right)-e^\theta\alpha_{st}^1\left(0,\frac{d}{dt}\widetilde{g_k^t}(\overline{\psi}(p,z))\right)\\
&=k^t(\phi_k^t(\overline{\psi}(p,[z]))).
\end{aligned}\]
Moreover for all $[z]\in S^1$ and for all $t\in[0,1]$ a direct computation shows that \begin{equation}\label{equation5}
\alpha_{S^{2n}\times S^1}\left(\frac{d}{dt}\Phi^t(\Lambda_{\Id}(p_0,[z]))\right)=0\ ,
\end{equation}
since $\Lambda_{\phi_k^t}(p_0,[z])=((p_0,[z]),0)\subset J^1(S^{2n}\times S^1)$ is independent of time. So by Lemma \ref{zapolski} and equalities \eqref{equation4}, \eqref{equation5}  we have
\[\begin{aligned}
c(\phi_k^1)=c(\Lambda_{\phi_k^1},u_0)&\leq \int_0^1\underset{(p,z)\in S^{2n}\times S^1}\max\alpha_{S^{2n}\times S^1} \left(\frac{d}{dt}\Phi^t(\Lambda_{Id}(p,z))\right)dt\\
&=\int_0^1\max\left\{\underset{S^{2n}\setminus\{p_0\}\times S^1}\max k^t(\phi_k^t(\overline{\psi}(p,z))),0\right\}dt\\
&=\int_0^1\max k^tdt\ ,
\end{aligned}\]
which proves the first inequality of Proposition \ref{inégalité géodésique shelukhin}. Using the fact that $(\phi_k^1)^{-1}$ can be generated by the Hamiltonian function 
\[ [0,1]\times\mathbb{R}^{2n}\times S^1\to\mathbb{R}\ \ \ \ (t,p,z)\mapsto -k^{1-t}(p,z)\] we deduce the second inequality of Proposition \ref{inégalité géodésique shelukhin} exactly in the same way.\end{proof}

\vspace{2.ex}

\noindent
We deduce the following corollary.\\

\begin{corollaire}\label{corollaire shelukhin}
Let $k :[0,1]\times\mathbb{R}^{2n}\times S^1\to\mathbb{R}$ be a compactly supported Hamiltonian function. Then
\[\begin{aligned}
\max\left\{c(\phi_k^1),c\left((\phi_k^1)^{-1}\right)\right\}&\leq\nu_S\left(\phi_k^1\right)\leq\widetilde{\nu_S}\left([\phi_k]\right)\ \text{ and }\\
\max\left\{\left\lceil c(\phi_k^1)\right\rceil,\left\lceil c\left((\phi_k^1)^{-1}\right)\right\rceil\right\}&\leq \nu_{FPR}\left(\phi_k^1\right)\leq\widetilde{\nu_{FPR}}\left([\phi_k]\right).
\end{aligned}\]
\end{corollaire}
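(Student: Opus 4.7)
The plan is to apply Proposition \ref{inégalité géodésique shelukhin} to every Hamiltonian that generates a path ending at $\phi_k^1$, and then to propagate the resulting estimates through the infimums defining the four norms. The key observation is that both $c(\phi_k^1)$ and $c((\phi_k^1)^{-1})$ depend only on the contactomorphism $\phi_k^1$, so the left-hand sides of the claimed inequalities are unaffected when one replaces $\{\phi_k^t\}$ by any other path with the same time-one map (in particular, by any other representative of $[\phi_k]$).

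For the Shelukhin inequality, fix any smooth path $\{\varphi^t\}$ with $\varphi^0=\Id$ and $\varphi^1=\phi_k^1$, generated by a compactly supported Hamiltonian $k'$. Proposition \ref{inégalité géodésique shelukhin} yields directly
\[c(\phi_k^1)\leq \int_0^1 \max k'^t\,dt \quad \text{and} \quad c((\phi_k^1)^{-1})\leq -\int_0^1 \min k'^t\,dt,\]
and both integrands are pointwise bounded by $\max_x|k'^t(x)|$, so both right-hand sides are at most $\mathcal{L}_S^{\alpha_{st}}(\{\varphi^t\})$. Passing to the infimum over all such paths gives the lower bound for $\nu_S(\phi_k^1)$; restricting instead to paths homotopic to $\{\phi_k^t\}$ gives the same bound with $\widetilde{\nu_S}([\phi_k])$ on the right. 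The rightmost inequality $\nu_S(\phi_k^1)\leq\widetilde{\nu_S}([\phi_k])$ is immediate from the definitions, since the infimum defining $\nu_S$ is taken over a larger set of paths.

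For the FPR inequality, combine the two estimates above with the trivial bounds $\int_0^1 \max k'^t\,dt \leq \max_{t,x} k'^t(x)$ and $-\int_0^1 \min k'^t\,dt \leq -\min_{t,x} k'^t(x)$, then take ceilings, to obtain
\[\left\lceil c(\phi_k^1)\right\rceil \leq \left\lceil \max_{t,x} k'^t(x)\right\rceil \quad \text{and} \quad \left\lceil c((\phi_k^1)^{-1})\right\rceil \leq \left\lceil -\min_{t,x} k'^t(x)\right\rceil\]
for every path $\{\varphi^t\}$ in $[\phi_k]$ generated by $k'$. Minimizing over representatives of $[\phi_k]$ gives $\lceil c(\phi_k^1)\rceil \leq \widetilde{\nu_+^{\alpha_{st}}}([\phi_k])$ and $\lceil c((\phi_k^1)^{-1})\rceil \leq -\widetilde{\nu_-^{\alpha_{st}}}([\phi_k])$, both of which are bounded above by $\widetilde{\nu_{FPR}}([\phi_k])$; minimizing instead over all paths ending at $\phi_k^1$ yields the analogous bound for $\nu_{FPR}(\phi_k^1)$, and $\nu_{FPR}(\phi_k^1)\leq\widetilde{\nu_{FPR}}([\phi_k])$ is again immediate.

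This corollary is essentially a bookkeeping exercise once Proposition \ref{inégalité géodésique shelukhin} is available, so no step is a real obstacle; the only minor care required is keeping straight which infimum corresponds to which norm (paths with fixed endpoint versus paths representing a fixed homotopy class) and, for the FPR piece, noting that the monotonicity of the ceiling allows the pointwise-in-$t$ estimates to replace the time-integrated ones before the minimization over representatives is performed.
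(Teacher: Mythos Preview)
Your proof is correct and follows essentially the same approach as the paper: apply Proposition \ref{inégalité géodésique shelukhin} to an arbitrary generating Hamiltonian, bound the resulting integrals by the relevant length/seminorm quantities, and pass to the infimum. The paper's proof is terser (it leaves the FPR case to the reader), while you spell out both cases and keep careful track of which infimum is over which set of paths, but the underlying argument is identical.
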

\vspace{2.ex}

\begin{proof}
By Proposition \ref{inégalité géodésique shelukhin},
\[\max\left\{c(\phi_k^1),c\left((\phi_k^1)^{-1}\right)\right\}\leq\max\left\{\int_0^1\max k^tdt,-\int_0^1\min k^tdt\right\}.\]
By definition of the Shelukhin length functional we have
\[\max\left\{\int_0^1\max k^tdt,-\int_0^1\min k^tdt\right\}\leq \mathcal{L}_S^{\alpha_{st}}(\phi_k)\]
for any compactly supported Hamiltonian function $k :[0,1]\times\mathbb{R}^{2n}\times S^1\to\mathbb{R}$. By taking the infinimum of the length over all paths that represents $\phi_k^1$ (resp.\ $[\{\phi_k\}]$) we deduce the first line of inequalities of Corollary \ref{corollaire shelukhin}. The second line of inequalities can be proved exactly in the same way and is left to the reader.\end{proof}

\vspace{2.ex}

\noindent

\subsection{Computation of the Shelukhin length and proof of Theorem \ref{geodesique shelukhin/fpr}}
The Shelukhin length of a path of contactmorphisms $\{\phi^t\}\subset\Cont_0^c(\mathbb{R}^{2n}\times S^1,\xi_{st})$ generated by a time independant compactly supported Hamiltonian function $h:\mathbb{R}^{2n}\times S^1\to\mathbb{R}$ is given by $\mathcal{L}_S^{\alpha_{st}}(\{\phi^t\})=\max\{\max h,-\min h\}$. Using Corollary \ref{corollaire shelukhin} we then deduce that the inequalities of Proposition \ref{inégalité géodésique shelukhin} are equalities in the case when $\{\phi^t\}$ is generated by an Hamiltonian function satisfying the hypothese of Theorem \ref{geodesique shelukhin/fpr} and so that it is a geodesic of the Shelukhin norm. A similar argument allows to compute the FPR norm of $\{\phi^t\}$ in this case and conclude the proof of Theorem \ref{geodesique shelukhin/fpr}.

\begin{proof}[Proof of Theorem \ref{geodesique shelukhin/fpr}]

Thanks to Corollary \ref{corollaire shelukhin} and Lemma \ref{calcul explicite} we deduce that
\[\max\left\{\max h,-\min h\right\}\leq \nu_S(\phi_h^1)\leq\widetilde{\nu_S}([\phi_h])\ .\]
On the other side, by definition we have \[\nu_S\left(\phi_h^1\right)\leq\widetilde{\nu_S}\left([\phi_h]\right)\leq\mathcal{L}_S^{\alpha_{st}}\left(\phi_h\right)=\int_0^1\underset{(t,(p,z))}\max|h^t(p,z)|dt=\max\{\max h,-\min h\} .\]
So all these inequalities are in fact equalities 
\[\mathcal{L}_S^{\alpha_{st}}\left(\phi_h\right)=\widetilde{\nu_S^{\alpha_{st}}}\left([\phi_h]\right)=\nu_S^{\alpha_{st}}\left(\phi_h^1\right)=\max\{\max h,-\min h\}\ .\]
In the same way, using Corollary \ref{corollaire shelukhin} and Lemma \ref{calcul explicite} we have
\[\max\left\{\left\lceil \max h\right\rceil, \left\lceil -\min h\right\rceil\right\}\leq\nu_{FPR}^{\alpha_{st}}\left(\phi_h^1\right)\leq\widetilde{\nu_{FPR}^{\alpha_{st}}}\left(\phi_h\right)\ ,\]
and by definition of the FPR norm we have
\[\nu^{\alpha_{st}}_{FPR}\left(\phi_h^1\right)\leq\widetilde{\nu^{\alpha_{st}}_{FPR}}\left([\phi_h]\right)\leq \max\{\left\lceil\max h\right\rceil,\left\lceil-\min h\right\rceil\}\ .\]
Again we deduce that all these inequalities are in fact equalities. Finally using the fact the the FPR norm is conjugation invariant we get the desired result.
\end{proof}

\vspace{2.ex}

The idea to prove the results for the discriminant and oscillation norm will follow the same lines: we have to show that the translation selector is a lower bound for these norms, and that the discriminant/oscillation length of the considered path realizes this lower bound. 

\subsection{A lower bound for the discriminant and oscillation norms}

In the next proposition we formulate precisely how the translation selector gives a lower bound for the discriminant and oscillation norms.\\

\begin{proposition}\label{crucial}
For any element $\phi\in\Cont_0^c(\mathbb{R}^{2n}\times S^1,\xi_{st})$ that is not the identity we have
\[\begin{aligned}
\max\left\{\left\lfloor c(\phi)\right\rfloor+1,\left\lfloor c\left(\phi^{-1}\right)\right\rfloor+1\right\}&\leq \nu_d(\phi) \text{ and }\\
\max\left\{\left\lfloor c(\phi)\right\rfloor+1,\left\lfloor c\left(\phi^{-1}\right)\right\rfloor+1\right\}&\leq \nu_{osc}(\phi).
\end{aligned}\]
\end{proposition}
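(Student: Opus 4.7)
The plan is to combine the continuity, monotonicity, and subadditivity properties of the translation selector listed in Theorem \ref{translation selector}. The cornerstone of the argument is the following sub-lemma: if $\{\varphi^s\}_{s\in[0,1]}$ is a smooth compactly supported path of contactomorphisms starting at the identity and having no discriminant points, then $c(\varphi^1)<1$ strictly. Indeed, the map $s\mapsto c(\varphi^s)$ is continuous by property 2 and vanishes at $s=0$; by Remark \ref{remarque2}, the presence of a positive integer in $\text{Spectrum}(\varphi^s)$ would produce a discriminant point of $\varphi^s$ in the interior of the support, which is forbidden. Since $c(\varphi^s)\in\text{Spectrum}(\varphi^s)$ and cannot cross a positive integer, continuity confines it to $[0,1)$. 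In parallel, comparing any non-positive path from the identity with the constant identity path through property 4, and invoking the sign property of property 1, gives $c(\psi)=0$ for the time-one map $\psi$ of any such path.

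For the discriminant norm, I would pick an optimal decomposition $0=t_0<\cdots<t_N=1$ realizing $\nu_d(\phi)=N$ and set $\bar\chi_i=(\phi^{t_i})^{-1}\phi^{t_{i+1}}$, so that $\phi=\bar\chi_0\bar\chi_1\cdots\bar\chi_{N-1}$. Each sub-path has no discriminant points, hence by the sub-lemma $c(\bar\chi_i)<1$ for every $i$. Iterating property 3, which reads $c(AB)\leq c(A)+\lceil c(B)\rceil$, by peeling off the rightmost factor at each step yields
\[c(\phi)\leq c(\bar\chi_0)+\sum_{i=1}^{N-1}\lceil c(\bar\chi_i)\rceil<1+(N-1)=N.\]
Since $\nu_d(\phi)=N$ is an integer, the strict inequality $c(\phi)<\nu_d(\phi)$ translates into $\lfloor c(\phi)\rfloor+1\leq\nu_d(\phi)$. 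Applying the same argument to the reversed path $\{\phi^{1-t}\circ(\phi^1)^{-1}\}$, whose discriminant length is still $N$, gives the analogous bound for $c(\phi^{-1})$.

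The oscillation case follows the same outline but requires additional care. For an optimal decomposition of a representative of $[\{\phi^t\}]$ into $K$ pieces of discriminant length $1$, with $N_+$ positive and $K-N_+$ negative pieces, the sub-lemma yields $c(\bar\chi_i)<1$ on positive pieces while the monotonicity argument forces $c(\bar\chi_i)=0$ on negative pieces. Subadditivity then gives
\[c(\phi)\leq c(\bar\chi_0)+\sum_{i=1}^{K-1}\lceil c(\bar\chi_i)\rceil\leq N_+,\]
and when the leftmost factor $\bar\chi_0$ corresponds to a positive piece, the bound sharpens to the strict inequality $c(\phi)<N_+$. The main obstacle is the case in which a run of negative pieces appears at the beginning of the decomposition; to handle it I would replace the Hamiltonian $h$ generating $\{\phi^t\}$ by the Hamiltonian $\hat h$ defined by $\hat h^t=0$ on the initial negative interval $[0,t_j]$ and $\hat h^t=h^t$ on $[t_j,1]$, where $t_j$ is the first time at which a positive piece starts. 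Then $\hat h\geq h$ pointwise on $\mathbb{R}^{2n}\times S^1$, so the monotonicity of $c$ (property 4) yields $c(\phi)\leq c(\hat\phi^1)$ with $\hat\phi^1=\phi\circ(\phi^{t_j})^{-1}$, and the natural decomposition of $\{\hat\phi^t\}$ now starts with a positive piece, so the favourable case applies and provides $c(\hat\phi^1)<N_+$, hence $c(\phi)<N_+$. The symmetric argument on the reversed path gives $c(\phi^{-1})<-\widetilde{\nu^{osc}_-}([\{\phi^t\}])$, and passing to the infimum, which is attained because these norms take integer values, yields the required estimate $\max\{\lfloor c(\phi)\rfloor+1,\lfloor c(\phi^{-1})\rfloor+1\}\leq\nu_{osc}(\phi)$.
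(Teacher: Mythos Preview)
Your argument for the discriminant norm is correct and in fact cleaner than the paper's. By iterating the inequality $c(AB)\leq c(A)+\lceil c(B)\rceil$ (which follows from property~3 as you observe) and keeping the leftmost factor unceiled, you obtain the \emph{strict} bound $c(\phi)<N$ in one stroke. The paper instead first proves $\lceil c(\phi)\rceil\leq N$ and then runs a separate contradiction argument to rule out $c(\phi)=N\in\mathbb{N}$; your trick of leaving $c(\bar\chi_0)$ outside the ceiling absorbs that case automatically.

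For the oscillation norm there is a genuine gap in the step where you invoke property~4. You claim that $\hat h\geq h$ pointwise implies $c(\phi)\leq c(\hat\phi^1)$ by monotonicity, but property~4 as stated compares $\alpha_{st}\bigl(\tfrac{d}{dt}\phi^t(x)\bigr)=h^t(\phi^t(x))$ with $\alpha_{st}\bigl(\tfrac{d}{dt}\hat\phi^t(x)\bigr)=\hat h^t(\hat\phi^t(x))$ at the \emph{same} point $x$. On the interval $[t_j,1]$ you have $\hat h^t=h^t$, but $\hat\phi^t=\phi^t\circ(\phi^{t_j})^{-1}\neq\phi^t$, so the required inequality reads $h^t(\phi^t(x))\leq h^t(\phi^t((\phi^{t_j})^{-1}(x)))$ for all $x$; since both sides range over the same set of values, this fails whenever $h^t\circ\phi^t$ is nonconstant. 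Thus the hypothesis of property~4 is not satisfied by your paths.

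The paper circumvents this by arranging (via its product convention $\phi=\phi_1^1\cdots\phi_N^1$, traversed right to left) that the initial negative run sits at the \emph{end} of the path in time. Then the comparison path is obtained by freezing the final negative segment to a constant; the two paths agree on the earlier portion and the velocity inequality is trivially $\leq 0$ versus $0$ on the frozen tail, so property~4 applies verbatim. You can repair your argument either this way, or by invoking the conjugation invariance $\lceil c(\psi\chi\psi^{-1})\rceil=\lceil c(\chi)\rceil$ to peel factors from the left as well as from the right, stopping the iteration at the first positive $\bar\chi_j$ so that the remaining unceiled term is $c(\bar\chi_j)<1$.
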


\begin{proof}

\vspace{2.ex}

Let us first show that if $\{\phi^t\}$ is a smooth path of compactly supported contactomorphisms starting at the identity in $\Cont_0^c(\mathbb{R}^{2n}\times S^1,\xi_{st})$ such that $\mathcal{L}_d\left(\{\phi^t\}\right)=1$ then $0\leq c(\phi^t)<1$ for all $t\in [0,1]$. Indeed, since the application $t\in[0,1]\mapsto c(\phi^t)$ is continuous and $c(\phi^0)=0$, if there exists $t\in]0,1]$ such that $c(\phi^t)\geq 1$ then there exists $t_0\in]0,t]$ such that $c(\phi^{t_0})=1$. This means that $\phi^{t_0}$ has a $1$-translated point hence a discriminant point, which contradicts the fact that $\mathcal{L}_d\left(\{\phi^t\}\right)=1$. So $c(\phi^t)<1$ for all $t\in[0,1]$ whenever $\mathcal{L}_d\left(\{\phi^t\}\right)=1$ and $\phi^0=\Id$. \\

\noindent
Let us consider an element $\phi\in\Cont_0^c(\mathbb{R}^{2n}\times S^1,\xi_{st})\setminus\{\Id\}$. Denote by $k=\nu_d(\phi)\in\mathbb{N}_{>0}$ its discriminant norm. Using the word metric definition of the discriminant norm, this means that there exist $k$ smooth paths of compactly supported contactomorphisms $\left(\{\phi_i^t\}_{t\in[0,1]}\right)_{i\in[1,k]\cap\mathbb{N}}$ such that 
\begin{enumerate}
    \item each of them starts at the identity and is of discriminant length $1$
    \item $\phi=\prod\limits_{i=1}^k\phi_i^1$.
\end{enumerate}
The triangular inequality property that is satisfied by the translation selector (see Theorem \ref{translation selector}) and the previous estimate of the translation selector on paths of discriminant length equals to $1$ allow us to deduce that 
\[\left\lceil c(\phi)\right\rceil\leq\sum\limits_{i=1}^k \left\lceil c(\phi_i^1)\right\rceil\leq k=\nu_d(\phi)\ ,\]
and so that $\nu_d(\phi)\geq \left\lceil c(\phi)\right\rceil $. Since $\nu_d(\phi^{-1})=\nu_d(\phi)$ we deduce that 

\[\nu_d(\phi)\geq\max\left\{\left\lceil c(\phi)\right\rceil,\left\lceil c(\phi^{-1})\right\rceil\right\} .\]
So if $\max\{c(\phi),c(\phi^{-1})\}\notin\mathbb{N}$ we proved the desired first inequality \[\nu_d(\phi)\geq\max\{\left\lfloor c(\phi)\right\rfloor+1,\left\lfloor c(\phi^{-1})\right\rfloor+1 \}.\]

\noindent
In remains to show the case when $\max\{c(\phi),c(\phi^{-1})\}\in\mathbb{N}$. To set the ideas down, let us assume that $\max\{c(\phi),c(\phi^{-1})\}=c(\phi)$, the case where $\max\{c(\phi),c(\phi^{-1})\}=c(\phi^{-1})$ leads to the same conclusion by the same arguments.  We already proved that $k=\nu_d(\phi)\geq c(\phi)$, and we want to show that $\nu_d(\phi)=k>c(\phi)$. If $c(\phi)=1$ then using the argument of the first paragraph of this proof we know that $\nu_d(\phi)\geq 2>c(\phi)$. So it remains to show the inequality for the cases where $c(\phi)\geq 2$. Suppose by contradiction that $c(\phi)=c(\prod\limits_{i=1}^k\phi_i^1)=k$. Then $\lceil c(\phi_i^1)\rceil =1$ for all $i\in[1,k]$, in particular $c(\phi_i^1)\in ]0,1[$. Thanks to Theorem \ref{translation selector} we have $\lceil c(\prod\limits_{i=2}^{k}\phi_i^1)\rceil \geq \lceil c(\prod\limits_{i=1}^k\phi_i^1)-c(\phi_1^1)\rceil $. Since $c(\phi_1^1)\in]0,1[$ and $c(\Pi_{i=1}^k\phi_i^1)\in\mathbb{N}$ we deduce that
\[\lceil c(\Pi_{i=1}^k\phi_i^1)-c(\phi_1^1) \rceil > c(\Pi_{i=1}^k\phi_i^1)-\lceil c(\phi_1^1) \rceil\  \text{ so }\ c(\Pi_{i=1}^k\phi_i^1)<\lceil c(\phi_1^1) \rceil+\lceil c(\Pi_{i=2}^{k}\phi_i^1)\rceil\ .\] This leads us to the following contradiction 
\[k=c(\phi)=c(\Pi_{i=1}^k\phi_i^1)<\lceil c(\phi_1^1)\rceil +\lceil c(\overset{k}{\underset{i=2}{\Pi}}\phi_i^1)\rceil = \sum\limits_{i=1}^k\lceil c\left(\phi_i^1\right)\rceil = k\ ,\]
which concludes the proof of the first inequality of the proposition.\\

The proof for the second inequality concerning the oscillation norm goes in the same way but will use one more argument based on the fourth point of Theorem \ref{translation selector}. Let $\phi\in\Cont_0^c(\mathbb{R}^{2n}\times S^1,\xi_{st})\setminus\{\Id\}$. By definition there exists $[\{\phi^t\}]\in\widetilde{\Cont_0^c}(\mathbb{R}^{2n}\times S^1,\xi_{st})$ such that $\phi^1=\phi$ and  \[\nu_{osc}\left(\phi\right)=\widetilde{\nu_{osc}}\left([\{\phi^t\}]\right)=\max\left\{\widetilde{\nu_{+}^{osc}}\left([\{\phi^t\}]\right),-\widetilde{\nu_{-}^{osc}}\left([\{\phi^t\}]\right)\right\}\ .\]
To set the ideas down we will assume that $\nu_{osc}(\phi)=\widetilde{\nu_{osc}}\left([\{\phi^t\}]\right)=\widetilde{\nu_{+}^{osc}}\left([\{\phi^t\}]\right)$, the case where $\widetilde{\nu_{osc}}\left([\{\phi^t\}]\right)=-\widetilde{\nu_{-}^{osc}}\left([\{\phi^t\}]\right)$ leads to the same conclusion and is left to the reader.\\

\noindent
We will prove that $\widetilde{\nu_{+}^{osc}}\left([\{\phi^t\}]\right)\geq \left\lfloor c(\phi)\right\rfloor+1$. The proof that $-\widetilde{\nu_{-}^{osc}}\left([\{\phi^t\}]\right)\geq \left\lfloor c(\phi^{-1})\right\rfloor+1$ goes in the same way and is left to the reader.\\

\noindent
Let us denote by $k:=\widetilde{\nu_{+}^{osc}}\left([\{\phi^t\}]\right)$. By definition this means that there exists $N\in\mathbb{N}^*$ smooth paths $\left(\{\phi_i^t\}\right)_{i\in[1,N]\cap\mathbb{N}}$ such that 
\begin{enumerate}
    \item each of them is of discriminant length $1$, $k$ of them are positive and $N-k$ of them are negative
    \item $\phi=\prod\limits_{i=1}^N\phi_i^1$.
\end{enumerate}
If $\{\varphi^t\}$ is non-positive by Theorem \ref{translation selector} we have $c(\varphi^1)=0$. So using the triangular inequality we have again that $\left\lceil c(\phi)\right\rceil \leq k=\widetilde{\nu_+^{osc}}([\{\phi^t\}])$. \\

\noindent
If we assume that $c(\phi)\notin\mathbb{N}$ then $\nu_{osc}(\phi)\geq \left\lfloor c(\phi)\right\rfloor +1$ and we have the second inequality of the proposition. \\
 
 \noindent
Finally suppose that $c(\phi)\in\mathbb{N}$. If $c(\phi)=1$ the argument from the first paragraph of this proof allows us to show that $\nu_{osc}(\phi)\geq 2=\lfloor c(\phi)\rfloor+1$ which proves again the second inequality of the proposition. So it remains to show the case where $c(\phi)$ is an integer greater than $1$. Let $j=\min\{i\in[1,N]\ |\ \{\phi_i^t\} \text{ is positive}\}$. Then thanks to Theorem \ref{translation selector} we have
\begin{equation}\label{mega relou}
\left\lceil c\left(\prod\limits_{i=j+1}^N\phi_i^1\right)\right\rceil \geq \left\lceil c\left(\prod\limits_{i=j}^N\phi_i^1\right)-c(\phi_j^1)\right\rceil\ .
\end{equation}

\noindent
Let us assume by contradiction that $c(\phi)=k$. This implies as for the discriminant norm that for any $i\in[1,N]$ such that $\{\phi_i^t\}$ is positive, we must have $c(\phi_i^1)\in]0,1[$. By the fourth point of Theorem \ref{translation selector} we have $k=c(\phi)\leq c(\prod\limits_{i=j}^N\phi_i^1)$ and by the triangular inequality we have $c(\prod\limits_{i=j}^N\phi_i^1)\leq k$. So we deduce that $c(\prod\limits_{i=j}^N\phi_i^1)=k$. Plugging this in the inequality \eqref{mega relou} we obtain the following contradiction:
\[k=c\left(\prod\limits_{i=j}^N\phi_i^1\right)<\left\lceil c(\phi_j^1)\right\rceil +\left\lceil c\left(\prod\limits_{i=j+1}^n\phi_i^1\right)\right\rceil \leq \sum\limits_{i=j}^N\lceil c(\phi_i^1)\rceil=k.\]\end{proof}

\vspace{2.ex}

\noindent

\subsection{Computation of the discriminant and oscillation lengths and proof of Theorem \ref{geodesique discriminante/oscillation}}

The next lemma will allow us to compute the discriminant and oscillation lengths of paths generated by time independent Hamiltonian functions and prove Theorem \ref{geodesique discriminante/oscillation}.\\

\begin{lemme}\label{longueur autonome}
Let $h:\mathbb{R}^{2n}\times S^1\to\mathbb{R}$ be a smooth compactly supported Hamiltonian function that generates the path of compactly supported contactomorphisms $\phi_h$. Then
\[\mathcal{L}_d\left(\phi_h\right)=\left\lfloor \frac{1}{t_0}\right\rfloor+1\ \text{ where }\ t_0=\inf\left\{t>0\ |\ \Interior(\Supp(h))\cap DP(\phi_h^t)\ne\emptyset \right\} .\]
If moreover we suppose that $h:\mathbb{R}^{2n}\times S^1\to\mathbb{R}$ is non-negative (resp.\ non-positive), then
\[\mathcal{L}_d\left(\phi_h\right)=\mathcal{L}_{osc}\left(\phi_h\right)=\left\lfloor\frac{1}{t_0}\right\rfloor+1,\]
where by convention we set $\frac{1}{0}=+\infty$.\\
\end{lemme}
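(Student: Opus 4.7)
The plan exploits the autonomy of $h$ to reduce both claims to an elementary interval-covering problem. Since $h$ is time-independent, $\{\phi_h^t\}$ is a one-parameter subgroup, so $(\phi_h^s)^{-1}\circ\phi_h^t=\phi_h^{t-s}$ for all $s,t\in[0,1]$. Hence a sub-path $\{\phi_h^t\}_{t\in[a,b]}$ is free of discriminant points in $\Interior(\Supp h)$ if and only if $\phi_h^\tau$ has no such point for any $\tau\in(0,b-a]$, which by definition of $t_0$ is equivalent to $b-a<t_0$.

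Computing $\mathcal{L}_d(\phi_h)$ then reduces to finding the minimal number $N$ of sub-intervals of length strictly less than $t_0$ needed to partition $[0,1]$. Taking $N=\lfloor 1/t_0\rfloor+1$ yields $1/N<t_0$, so the uniform partition $t_i=i/N$ is admissible and gives $\mathcal{L}_d(\phi_h)\leq\lfloor 1/t_0\rfloor+1$. Conversely, any admissible partition into $M$ pieces satisfies $1<Mt_0$, hence $M>1/t_0$ and $M\geq\lfloor 1/t_0\rfloor+1$.

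For the oscillation claim, assume $h\geq 0$ (the case $h\leq 0$ being symmetric). The contact equations for $X_h$ give $dh(X_h)=h\cdot dh(R_{\alpha_{st}})$, so $t\mapsto h(\phi_h^t(x))$ satisfies a linear ODE whose solution preserves the sign of $h$; in particular $\alpha_{st}(\frac{d}{dt}\phi_h^t(x))=h(\phi_h^t(x))\geq 0$ everywhere, so every sub-path of $\phi_h$ is non-negative. Using the optimal discriminant-length partition above with all $\lfloor 1/t_0\rfloor+1$ sub-pieces counted as positive (and none negative) in the definition of $\mathcal{L}_\pm^{osc}$ shows $\mathcal{L}_+^{osc}(\phi_h)\leq\mathcal{L}_d(\phi_h)$ and $\mathcal{L}_-^{osc}(\phi_h)=0$. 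Conversely, a non-negative path admits no strictly negative sub-piece, so every decomposition realizing $\mathcal{L}_+^{osc}(\phi_h)$ consists of $K=\mathcal{L}_+^{osc}(\phi_h)$ sub-pieces of discriminant length one; by subadditivity of $\mathcal{L}_d$ this gives $\mathcal{L}_d(\phi_h)\leq\mathcal{L}_+^{osc}(\phi_h)$, yielding the desired equality.

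The main obstacle is the borderline case where the infimum $t_0$ is attained and/or $1/t_0$ is itself an integer: one must argue carefully that the strict inequality $b-a<t_0$ in the discriminant-free characterization (rather than $\leq t_0$) produces exactly the formula $\lfloor 1/t_0\rfloor+1$ and not $\lfloor 1/t_0\rfloor$. A secondary subtlety is ensuring that a non-negative path whose generating Hamiltonian vanishes somewhere inside $\Interior(\Supp h)$ still admits sub-pieces of discriminant length one that count as strictly positive in $\mathcal{L}_+^{osc}$; this is automatic whenever $h>0$ on $\Interior(\Supp h)$, which is precisely the situation arising in the applications of this lemma in Theorem \ref{geodesique discriminante/oscillation}.
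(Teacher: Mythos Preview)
Your argument is correct and follows the same route as the paper: both exploit autonomy to get $(\phi_h^s)^{-1}\phi_h^t=\phi_h^{t-s}$, reducing the discriminant-length computation to the combinatorial problem of partitioning $[0,1]$ into sub-intervals of length below $t_0$; your unified treatment via $1<Mt_0\Rightarrow M\geq\lfloor 1/t_0\rfloor+1$ is in fact slightly cleaner than the paper's explicit case split on whether $1/t_0$ is an integer. The paper's own proof actually omits the oscillation statement entirely, so your argument there (together with the two subtleties you correctly flag) goes beyond what the paper provides; note incidentally that if $h\geq 0$ vanishes at some $x_0\in\Interior(\Supp h)$ then $x_0$ is a local minimum, hence $dh(x_0)=0$, hence $X_h(x_0)=0$ and $x_0\in DP(\phi_h^t)$ for all $t>0$, forcing $t_0=0$ and both lengths to be $+\infty$, so that case is harmless.
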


\begin{proof}
 Recall that \[\Supp(\phi_h)=\overline{\underset{t\in[0,1]}{\bigcup}\{x\in \mathbb{R}^{2n}\times S^1\ |\ \phi_h^t(x)\ne x\}}\ \text{ and }\  \Supp(h)=\overline{\{x\in \mathbb{R}^{2n}\times S^1 \ |\ h(x)\ne 0\}}.\]
 Straightforward arguments allow to show that 
 \[\Supp(\phi_h)=\Supp(h)\ ,\] 
 so $t_0=\inf\left\{t>0\ |\ \Interior(\Supp(\phi_h))\cap DP(\phi_h^t)\ne\emptyset \right\}$.\\
 
 \noindent
 Suppose there exist $0\leq t<s\leq 1 $ such that $DP((\phi_h^t)^{-1}\phi_h^s)\cap\Interior(\Supp(\phi_h))\ne\emptyset$. Since the Hamiltonian function $h$ does not depend on time this implies that $(\phi_h^t)^{-1}\phi_h^s=\phi_h^{s-t}$ and so $DP(\phi_h^{s-t})\cap\Interior(\Supp(\phi_h))\ne\emptyset$. So by definition of $t_0$ we have that $s-t\geq t_0$.\\
 
 \noindent
So if $t_0>0$ and $\frac{1}{t_0}$ is not an integer, by cutting the interval $[0,1]$ in $\left\lceil\frac{1}{t_0}\right\rceil$ intervals of same length $1/\left\lceil\frac{1}{t_0}\right\rceil$, the discriminant length of $\phi_h$ restricted to any of this interval is equal to one, i.e.\ 
\[\mathcal{L}_d\left(\{\phi_h^t\}_{t\in[i/\lceil\frac{1}{t_0}\rceil; (i+1)/\lceil\frac{1}{t_0}\rceil]}\right)=1,\ \forall i\in \left[0,\left\lceil \frac{1}{t_0}\right\rceil-1\right]\ .\]
Therefore $\mathcal{L}_d(\phi_h)\leq \left\lceil \frac{1}{t_0}\right\rceil$. Moreover if we cut $[0,1]$ in stricly less than $\left\lceil \frac{1}{t_0}\right\rceil$ intervals, then there exists at least one interval $I\subset [0,1]$ with length greater than $t_0$ and so $\mathcal{L}_d\left(\{\phi_h^t\}_{t\in I}\right)\geq 2$. We then deduce that when $1>t_0>0$ and $\frac{1}{t_0}$ is not an integer we have 
\[ \mathcal{L}_d(\phi_h)=\left\lceil \frac{1}{t_0}\right\rceil=\left\lfloor\frac{1}{t_0}\right\rfloor+1 \ .\]

\noindent
 Now let us consider the case when $t_0>0$ and $\frac{1}{t_0}$ is an integer. First we can see that if we cut $[0,1]$ into $\frac{1}{t_0}$ intervals then at least one of them will be of length greater or equal to $t_0$, and so the discriminant length of $\phi_h$ restricted to this interval will be greater or equal than $2$. So we deduce that $\mathcal{L}_d(\phi_h)\geq \frac{1}{t_0}+1$. However if we cut $[0,1]$ into $\frac{1}{t_0}+1$ pieces such that each piece is of length $1/(\frac{1}{t_0}+1)$ then the same argument as before allows to show that the discriminant length of $\phi_h$ restricted to any of these intervals is equal to one. So we conclude that in this case again 
 \[\mathcal{L}_d(\phi_h)=\left\lfloor \frac{1}{t_0}\right\rfloor +1.\]

\noindent
 Finally in the case $t_0=0$ it is obvious that $\mathcal{L}_d(\phi_h)=+\infty$. \\
\end{proof}

\noindent
We are now ready to prove Theorem \ref{geodesique discriminante/oscillation}.\\

\begin{proof}[Proof of Theorem \ref{geodesique discriminante/oscillation}]
Let us first compute the $t_0$ coming from Lemma \ref{longueur autonome} :
\[t_0=\inf\left\{t>0\ |\ \Interior\left({\Supp(h)}\right)\cap DP(\phi_h^t)\ne\emptyset\right\}\ .\]
For all $0<t\leq 1$, $(p,z)\in DP(\phi_h^t)$ if and only if $(p,z)$ is a $k$-translated point for $\phi_h^t$, where $k$ is an integer. Using Lemmas \ref{relevé de contact} and \ref{petite hessienne}, we know that $(p,z)$ is a $k$-translated point of $\phi_h^t$, for some $t\in ]0,1]$, if and only if $p$ is a critical point of $H$ and $tH(p)=k$ (see the proof of Lemma \ref{calcul explicite}). So
\[t_0=\inf\left\{t>0 \ |\ \exists (p,z)\in\Interior\left({\Supp(h)}\right),\ d_pH=0\ \text{ and }\ tH(p)\in\mathbb{Z}\right\}.\]
Since $\Supp(h)=\Supp(H)\times S^1$ and since $0$ is a regular value of $H$ inside of its support, we deduce that 
\[\begin{aligned}
t_0&=\min\left\{t>0\ |\ \exists p\in \Interior\left(\Supp(H)\right),\  d_p H=0\ \text{ and } tH(p)\in\mathbb{Z}\setminus\{0\}\right\}\\
&=\min\left\{t>0 \ |\ \exists p\in \Interior\left(\Supp(H)\right),\  d_p H=0\ \text{ and } t|H(p)|=1\right\}\\
&=\min\left\{t>0 \ |\ \exists p\in \Interior\left(\Supp(H)\right),\  d_p H=0\ \text{ and } t=\frac{1}{|H(p)|}\right\}\\
&=\frac{1}{\max\{\max H,-\min H\}}=\frac{1}{\max\{\max h,-\min h\}}.
\end{aligned}\]
So by Lemma \ref{longueur autonome} we have \[\mathcal{L}_d(\phi_h)=\max\{\lfloor \max h \rfloor +1,\lfloor -\min h+\rfloor+1\}.\]
On the other side, Proposition \ref{crucial} and Lemma \ref{calcul explicite} allow us to say that 
\[\nu_d\left(\phi_h^1\right)\geq \max\left\{\lfloor c(\phi_h^1)\rfloor+1, \left\lfloor c\left((\phi_h^1)^{-1}\right)\right\rfloor+1\right\}=\max\left\{\lfloor \max h \rfloor+1, \lfloor \min h\rfloor+1 \right\}.\]
By definition $\mathcal{L}_d(\phi_h)\geq\nu_d(\phi_h^1)$ all these inequalities are equalities :
\[\max\{\lfloor \max H+1 \rfloor ,\lfloor -\min H+1 \rfloor\}=\mathcal{L}_d(\phi_h)\geq\nu_d(\phi_h^1)\geq \max\{\lfloor \max H+1 \rfloor ,\lfloor -\min H+1 \rfloor\}.\]
Finally, because the discriminant length and the discriminant norm are conjugation invariant we deduce Theorem \ref{geodesique discriminante/oscillation}. \\

\noindent
The proof for the oscillation norm goes exactly the same way. \end{proof}

\vspace{2.ex}

\subsection{Proof of Proposition \ref{capacite-energie}}
Let us recall the statement of Proposition \ref{capacite-energie}.

\begin{proposition}
Let $\phi\in\Cont_0^c(\mathbb{R}^{2n}\times S^1,\xi_{st})$ be a contactomorphism that displaces $U\subset\mathbb{R}^{2n}\times S^1$, i.e.\ $\phi(U)\cap U=\emptyset$. Then $\left\lceil \nu(\phi)\right\rceil \geq\frac{1}{2}\left\lceil c(U)\right\rceil $ where $\nu$ denotes the FPR, discriminant, oscillation or Shelukhin norm.\\ 
\end{proposition}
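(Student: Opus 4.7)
The plan is to combine Sandon's capacity displacement inequality \eqref{sandoninegalite} with the lower bounds on each of the four norms by the translation selector that were established earlier in the paper. Concretely, the key observation is that for each norm $\nu\in\{\nu^{\alpha_{st}}_{FPR},\nu^{\alpha_{st}}_S,\nu_d,\nu_{osc}\}$ one already has
\[
\lceil c(\phi)\rceil\le \lceil \nu(\phi)\rceil\qquad\text{and}\qquad \lceil c(\phi^{-1})\rceil\le \lceil \nu(\phi)\rceil ,
\]
so that inserting these two bounds into \eqref{sandoninegalite} immediately yields
\[
\lceil c(U)\rceil\le \lceil c(\phi)\rceil+\lceil c(\phi^{-1})\rceil\le 2\lceil \nu(\phi)\rceil .
\]

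First I would set up the symmetry of each norm, since the right-hand side of the desired inequality involves $\nu(\phi)$ but Sandon's inequality feeds in both $c(\phi)$ and $c(\phi^{-1})$. The discriminant, oscillation and FPR norms are conjugation invariant and in particular symmetric ($\nu(\phi)=\nu(\phi^{-1})$); for the Shelukhin norm, symmetry follows directly from the invariance of $\mathcal{L}_S^{\alpha_{st}}$ under the path-reversal $\{\phi^t\}\mapsto\{\phi^{1-t}\circ(\phi^1)^{-1}\}$ listed among the axioms of a length functional in Section \ref{section 1}. This step is routine.

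Next, for the Shelukhin and FPR cases I would invoke Corollary \ref{corollaire shelukhin}, which gives $c(\phi)\le \nu^{\alpha_{st}}_S(\phi)$ and $\lceil c(\phi)\rceil\le \nu^{\alpha_{st}}_{FPR}(\phi)$; combined with symmetry this produces the two needed bounds on $\lceil c(\phi)\rceil$ and $\lceil c(\phi^{-1})\rceil$. For the discriminant and oscillation cases I would invoke Proposition \ref{crucial}, which gives the stronger estimates $\lfloor c(\phi)\rfloor+1\le \nu_d(\phi)$ and $\lfloor c(\phi)\rfloor+1\le \nu_{osc}(\phi)$; since $\lceil c(\phi)\rceil\le\lfloor c(\phi)\rfloor+1$ always holds, the same upper bound by the norm follows. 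Plugging everything into \eqref{sandoninegalite} and dividing by $2$ yields, after taking ceilings, the desired
\[
\lceil \nu(\phi)\rceil\ge \tfrac{1}{2}\lceil c(U)\rceil .
\]

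There is essentially no hard step here, as the core difficulty has already been absorbed into \eqref{sandoninegalite} and into the translation-selector lower bounds proved earlier. The only small subtlety to treat carefully is the degenerate case $\phi=\Id$ (which forces $U=\emptyset$ and hence $c(U)=0$, making the inequality trivial) and, for the oscillation and FPR norms, the underlying requirement that $(\mathbb{R}^{2n}\times S^1,\xi_{st})$ be universally orderable, which is recalled in Remark \ref{remarque sur ordonnabilite}.
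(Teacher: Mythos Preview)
Your proposal is correct and follows essentially the same approach as the paper: combine the displacement inequality \eqref{sandoninegalite} with the lower bounds from Corollary \ref{corollaire shelukhin} and Proposition \ref{crucial}. One small simplification: your detour through symmetry of the norms is unnecessary, since both Corollary \ref{corollaire shelukhin} and Proposition \ref{crucial} already bound $\max\{\lceil c(\phi)\rceil,\lceil c(\phi^{-1})\rceil\}$ directly by $\nu(\phi)$, so the inequality $\lceil c(\phi)\rceil+\lceil c(\phi^{-1})\rceil\le 2\lceil\nu(\phi)\rceil$ follows at once.
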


The proof of Proposition \ref{capacite-energie} is an immediate consequence of Corollary \ref{corollaire shelukhin}, Proposition \ref{crucial} together with the inequality \eqref{sandoninegalite} of the introduction.

\begin{proof}[Proof of Proposition \ref{capacite-energie}]
From Corollary \ref{corollaire shelukhin} and Proposition \ref{crucial} we deduce that 
$2\left\lceil \nu(\phi)\right\rceil\geq \left\lceil c(\phi)\right\rceil+\left\lceil c(\phi^{-1})\right\rceil$, where $\nu$ stands for any of the norms $\nu_d,\ \nu_{osc},\ \nu^{\alpha_{st}}_{FPR}$ or $\nu^{\alpha_{st}}_S$, which concludes the proof of Proposition \ref{capacite-energie}.
\end{proof}

\bibliographystyle{plain}
\bibliography{biblio}

\end{document}